\documentclass[10pt,a4paper]{article}
\usepackage[a4paper,margin=2.3cm,includeheadfoot]{geometry}
\usepackage{amsmath,amssymb,amsthm,mathtools}
\usepackage[T1]{fontenc}
\usepackage{lmodern}
\usepackage{enumitem}
\usepackage{float}
\usepackage{tikz}
\usetikzlibrary{arrows.meta,positioning,fit,backgrounds,calc}
\usepackage{microtype}
\usepackage[hidelinks]{hyperref}

\hypersetup{pdftitle={Sequential-Innovation Reducibility and the Innovation Spectrum},pdfauthor={Amir Leshem},pdfsubject={Computability, reducibility, and algorithmic randomness}}

\title{Sequential-Innovation Reducibility and the Innovation Spectrum}
\author{Amir Leshem\\
\small Faculty of Engineering, Bar-Ilan University, Ramat Gan 52900, Israel\\
\small \texttt{amir.leshem@biu.ac.il}\\
\small ORCID: 0000-0002-2265-7463}
\date{}

\newtheorem{theorem}{Theorem}[section]
\newtheorem{lemma}[theorem]{Lemma}
\newtheorem{proposition}[theorem]{Proposition}
\newtheorem{corollary}[theorem]{Corollary}

\newtheorem{problem}[theorem]{Problem}
\theoremstyle{definition}
\newtheorem{definition}[theorem]{Definition}
\newtheorem{remark}[theorem]{Remark}

\newcommand{\lesir}{\leq_{\mathrm{sir}}}
\newcommand{\nlesir}{\nleq_{\mathrm{sir}}}

\newcommand{\supp}{\operatorname{supp}}
\newcommand{\Specinn}{\operatorname{Spec}_{\mathrm{inn}}}
\newcommand{\TTvis}{\operatorname{TTvis}}
\newcommand{\WTTvis}{\operatorname{WTTvis}}
\newcommand{\zeroSeq}{\overline{0}}
\newcommand{\edim}{\dim_{\mathrm{eff}}}

\newcommand{\MartinLof}{\mbox{Martin-L\"of}}
\newcommand{\MLR}{\operatorname{MLR}}
\newcommand{\equivsir}{\equiv_{\mathrm{sir}}}

\begin{document}
\maketitle
\begin{abstract}
 Sequential prediction naturally induces an innovation sequence consisting of the prediction errors produced by a causal predictor. We use the collection of all such innovation sequences to define the \emph{innovation spectrum} of an individual binary sequence and, from it, a new reducibility based on sequential information extraction. We show that this reducibility refines truth-table reducibility while exhibiting a fundamentally different geometry. The degree structure decomposes into two canonical regions: a truth-table spine, whose induced order is isomorphic to the truth-table degrees, and a complementary reservoir-immune region, consisting of sequences from which no infinite computable predictable reservoir can be extracted. We establish bridge constructions connecting the two regions, prove that reservoir immunity is preserved under sequential innovation, and show that the Martin--L\"of-random degrees form a proper downward-closed substructure inside the reservoir-immune region. These results reveal a new geometric organization of individual sequences based on causal predictability rather than classical oracle computation.
\end{abstract}

\medskip
\noindent\textbf{2020 Mathematics Subject Classification.} Primary 03D30, 03D32; Secondary 68Q30, 94A17.

\noindent\textbf{Keywords.} Computability, sequential innovation, truth-table reducibility, algorithmic randomness, Turing degrees, c.e. degrees, randomness extraction.

\noindent\textbf{Funding.} This research was partially supported by Israel Science Foundation grant ISF 2197/22.

\section{Introduction}

Classical oracle computation permits an algorithm to query a completed oracle
at positions chosen during the computation.  A causal sequential computation
has less freedom.  At stage $n$ it has seen only the prefix
$X\upharpoonright n$ and must commit before the next source bit $X(n)$ is
revealed.  We record the newly disclosed information through the innovation
\[
        I_P^X(n)=X(n)\oplus P(X\upharpoonright n),
\]
where $P:2^{<\omega}\to\{0,1\}$ is total computable.  If
$S=\{s_0<s_1<\cdots\}$ is an infinite computable schedule, the corresponding
trace is
\[
        J_{P,S}^X(k)=I_P^X(s_k).
\]
The schedule is fixed independently of the source values, and the prediction at
$s_k$ is made before $X(s_k)$ becomes available.

The resulting reducibility is related to several established restrictions on
oracle computation, but it is not identical to any of them.  In
truth-table autoreducibility, the current oracle bit is withheld, while the
machine may query other coordinates, including future coordinates
\cite{EbertMerkleVollmer2003}.  In computable Lipschitz and identity-bounded
Turing reducibility, the use is constrained by a rate such as $n+c$ or $n$,
and the induced degree structures already exhibit nontrivial density and
splitting phenomena
\cite{DowneyHirschfeldtLaForte2004,BarmpaliasLewis2006,Day2010}.  By contrast,
our use function may grow arbitrarily, but it must increase with the output
coordinate and the last queried bit must be genuinely fresh and balanced.
The model is also distinct from computable selection rules and
Kolmogorov--Loveland stochasticity: those notions study which source
coordinates are selected, possibly adaptively or nonmonotonically, whereas
our schedule is source-independent and every selected source bit participates
in a prescribed causal transformation
\cite{Schnorr1971,MerkleMillerNiesReimannStephan2006,Nies2009}.

The same causal viewpoint has a quantitative counterpart in
individual-sequence prediction. In companion work
\cite{leshem2026PRSuperpredictor}, a computable probabilistic
superpredictor achieves sublinear regret relative to every rational-valued
primitive-recursive forecaster on every individual sequence, and is Bayes
optimal on every Martin--L\"of-random realization of a computable stationary
ergodic binary source. Together, the two papers identify a causal
computational interface that is strictly weaker than unrestricted oracle
computation while still supporting universal prediction guarantees.
We define $Y\leq_{\mathrm{sir}}X$ when $Y$ is a sequential innovation trace
of $X$.  The first main result is an exact normal form: these are precisely
the truth-table computations with strictly increasing uses whose truth tables
flip whenever only the final queried bit is changed.  This immediately gives,
$X\lesir Y\Longrightarrow X\leq_{tt}Y
\Longrightarrow X\leq_{wtt}Y
\Longrightarrow X\leq_TY.$
The direction reverses conceptually after quotienting into degrees: the
associated equivalence relations become successively coarser, and the
identity on reals induces natural surjections
$
\mathcal D_{\mathrm{sir}}\twoheadrightarrow\mathcal D_{tt}
\twoheadrightarrow\mathcal D_{wtt}\twoheadrightarrow\mathcal D_T.
$
Thus SIR is a proper subreducibility of truth-table reducibility, while its
degree partition is finer: a single truth-table or Turing degree may split
into several SIR degrees. For comparison, Greenberg, Nies, and Turetsky \cite{GreenbergNiesTuretsky2026SJT} study
SJT reducibility, a weak reducibility arising from strong jump traceability
and partial relativization; its motivation and methods are distinct from the
causal framework developed here.
We briefly review the new results in this paper.

For a fixed presentation $X$, the innovation spectrum
\[
        \Specinn(X)=\{\deg_T(Y):Y\leq_{\mathrm{sir}}X\}
\]
records the Turing degrees exposed by causal traces.  This spectrum is an
invariant of the SIR degree, but it is not an invariant of the truth-table or
Turing degree.  The mechanism controlling its computable bottom is a
\emph{predictable reservoir}: an infinite computable set of future positions
whose bits are exactly predictable from their preceding prefixes.  A
presentation has a computable trace exactly when such a reservoir exists, and
then every truth-table reduction from that presentation can be realized as a
sequential trace.  Above the computable SIR degree, every target has a predictable reservoir,
so SIR and truth-table reducibility coincide.  Sparse computable padding
places a representative of every truth-table degree in this upper region.
Consequently this \emph{truth-table spine} is order-isomorphic to the full
truth-table degree structure, every truth-table degree has a unique greatest
SIR degree, and joins on the spine are induced by effective disjoint union.
Every c.e. presentation lies on the spine; in particular, c.e.-represented
SIR degrees are closed under joins. For a c.e. presentation $A$, the
presentations SIR-reducible to $A$ are exactly the presentations truth-table
reducible to $A$. Consequently, the truth-table degrees visible below $A$
form the principal truth-table ideal below $\deg_{tt}(A)$, and, after
restriction to the spine, this ideal is also precisely the SIR-degree
lower cone below $\deg_{\mathrm{sir}}(A)$.

The complementary region is exactly the class of reservoir-immune
presentations.  Indeed, a presentation lies above the computable SIR degree
if and only if it has a predictable reservoir, equivalently if and only if it
is not reservoir-immune.  Thus Cantor space splits exhaustively as
\[
2^\omega=\mathcal U_{\mathrm{sir}}\mathbin{\dot\cup}\mathrm{RI}.
\]
Both sides are dynamically natural: the truth-table spine is upward closed,
whereas reservoir immunity is downward closed under SIR.  Inside the RI
region, the fair-coin Martin--L\"of-random degrees form a proper
SIR-downward-closed subclass.  Every SIR trace of a random real is random,
but RI also contains nonrandom presentations in abundance: every basic
cylinder contains a perfect Cantor set of non-Martin--L\"of-random RI reals.

The second part develops the internal geometry of the random region.  For a
Martin--L\"of-random upper degree, every lower SIR degree is either a finite
causal shift in the same Turing degree or lies below a strict Turing-degree
drop.  Every interval of the second kind contains an order-isomorphic copy of
$(\mathbb Q,<)$ represented by random reals, whereas consecutive finite
shifts form cover pairs.  Moreover, the even and odd columns of one random
real are incomparable and have no SIR join.  At a broader level, every trace
random for a computable nondegenerate Bernoulli product has below it an
order-embedded copy of every computable countable partial order, represented
by reals random for the same product measure.  The bridge presentation
$\emptyset'\oplus R$, with $R$ $2$-random, combines the entire c.e. degree
structure with transverse random degrees and a lifted parallel family above
$\mathbf 0'$.

The main structure of the SIR degrees is summarized in the following theorem:
\begin{theorem}[Main structural theorem: exhaustive geometry and two bridges]
\label{thm:main-structure}
Let $\mathbf0_{\mathrm{sir}}$ denote the computable SIR degree, and put
\[
\begin{aligned}
\mathcal U_{\mathrm{sir}}
  &=\{\mathbf a:\mathbf0_{\mathrm{sir}}\leq_{\mathrm{sir}}\mathbf a\},\\
\mathcal I_{\mathrm{RI}}
  &=\{\deg_{\mathrm{sir}}(X):X\in\mathrm{RI}\},\\
\mathcal R_{\mathrm{sir}}
  &=\{\deg_{\mathrm{sir}}(R):R\in\MLR\}.
\end{aligned}
\]
Then:
\begin{enumerate}[label=(\roman*)]
\item As reducibility relations,
\[
\leq_{\mathrm{sir}}\subsetneq\leq_{tt}
\subsetneq\leq_{wtt}\subsetneq\leq_T.
\]
The induced degree quotients are related by natural surjections
\[
\mathcal D_{\mathrm{sir}}\twoheadrightarrow\mathcal D_{tt}
\twoheadrightarrow\mathcal D_{wtt}\twoheadrightarrow\mathcal D_T,
\]
and the first map is noninjective.
\item The upper region $\mathcal U_{\mathrm{sir}}$ is order-isomorphic to the
truth-table degrees and is an upper semilattice.  Every truth-table degree
contains a unique greatest SIR degree.  Every c.e. presentation belongs to
$\mathcal U_{\mathrm{sir}}$, and the c.e.-represented SIR degrees are closed
under joins.
\item The SIR degrees split exhaustively as
\[
\mathcal D_{\mathrm{sir}}
=\mathcal U_{\mathrm{sir}}\mathbin{\dot\cup}\mathcal I_{\mathrm{RI}}.
\]
The RI region $\mathcal I_{\mathrm{RI}}$ is downward closed.  The random
region $\mathcal R_{\mathrm{sir}}$ is a proper downward-closed subclass of
$\mathcal I_{\mathrm{RI}}$.  More strongly, every basic cylinder contains
a perfect null Cantor set contained in $\mathrm{RI}\setminus\MLR$.
\item If $B$ is Martin--L\"of random and $A\lesir B$, then either
$A\equiv_TB$ and $A\equivsir\sigma^kB$ for a unique $k<\omega$, or
$A<_TB$ and the open SIR interval between $A$ and $B$ contains an
order-isomorphic copy of $(\mathbb Q,<)$ represented by Martin--L\"of-random
reals.  Moreover, the even and odd columns of every Martin--L\"of-random real
are incomparable SIR degrees with no join.
\item There are two canonical bridge mechanisms.  For every
$X\in\mathrm{RI}$, its padded presentation $X^*$ lies on the spine,
$X\equiv_{tt}X^*$, and
\[
\deg_{\mathrm{sir}}(X)<\deg_{\mathrm{sir}}(X^*),
\]
where $X^*$ represents the unique greatest SIR degree in the truth-table
degree of $X$.  Separately, if $R$ is $2$-random, then
$\emptyset'\oplus R$ is a bridge presentation whose lower SIR cone contains
the c.e.-represented spine, transverse random degrees, and their lifted
families above $\mathbf0'$.
\item If $p\in(0,1)$ is computable and $X$ has a
$\mu_p$-Martin--L\"of-random SIR trace, then every computable countable
partial order embeds into the SIR degrees strictly below $X$, represented by
$\mu_p$-Martin--L\"of-random reals.
\end{enumerate}
\end{theorem}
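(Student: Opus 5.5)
This theorem summarizes results proved later in the paper. The plan is therefore to assemble it item by item from a small number of core lemmas, proving those lemmas directly from the definition $Y\lesir X$ iff $Y=J^X_{P,S}$. The foundation is the normal form. A trace is a truth-table reduction whose $k$-th output queries exactly $X\upharpoonright(s_k+1)$, so the uses are strictly increasing. Its table has the form $\tau_k(\rho b)=b\oplus P(\rho)$, which flips when only the last bit changes. Conversely, any such table can be written as $\tau_k(\rho b)=b\oplus\tau_k(\rho 0)$. Defining $P(\rho)=\tau_k(\rho0)$ at length $s_k$ (and $0$ elsewhere) then recovers a trace.

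\textbf{Item (i).} The inclusions follow from this normal form together with the classical chain $tt\subseteq wtt\subseteq T$. The classical strictness of the last two inclusions is standard. For $\lesir\subsetneq\leq_{tt}$, I would take a Martin--L\"of random $R$ and note that $\zeroSeq\leq_{tt}R$. However, every trace of $R$ is random: a computable predictor applied along a computable schedule preserves fair-coin measure, so a test for the trace pulls back to a test for $R$. Hence $\zeroSeq\nlesir R$. The quotient maps are surjective because the identity induces them. Noninjectivity of the first map follows because $R\equiv_{tt}R^*$ but $R\nlesir$-above $R^*$, as in (v).

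\textbf{Items (ii) and (iii).} The key lemma is the reservoir lemma: $X$ has a computable trace iff $X$ has a predictable reservoir $S$, meaning $X(s)=P(X\upharpoonright s)$ on $S$. If a reservoir exists, any $tt$-reduction $\Phi$ can be simulated. Choose reservoir positions $s_k$ beyond the use of $\Phi(k)$, and set the prediction at $s_k$ to $X(s_k)\oplus\Phi^X(k)$. This is computable from the prefix, since $X(s_k)$ is predicted. Hence on $\mathcal U_{\mathrm{sir}}$ we get $\leq_{tt}=\lesir$, and the complement is exactly $\mathrm{RI}$, giving the partition in (iii). Downward closure of $\mathrm{RI}$ is equivalent to upward closure of $\mathcal U$, which holds by transitivity; transitivity in turn is checked by composing normal forms. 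For padding, define $X^*$ by inserting zeros at a sparse computable set. Then $X^*\equiv_{tt}X$ and $X^*\in\mathcal U$, so $\mathcal U\cong\mathcal D_{tt}$. The SIR degree of $X^*$ is the greatest in its $tt$-degree, because any $Z\equiv_{tt}X$ is $tt$-below $X^*$ and hence SIR-below it. Joins come from $\oplus$ of padded presentations. For a c.e. set $A$, I would use the enumeration to build a reservoir: computably locate infinitely many positions where membership is decidable from the prefix. This is the most delicate subpoint, since a purely combinatorial trick is needed; otherwise I would pad. Downward closure of $\MLR$ was argued in (i). The perfect set in $\mathrm{RI}\setminus\MLR$ inside a cylinder is obtained by a tree construction that diagonalizes against all pairs $(P,S)$, killing each candidate reservoir by forcing an error at some $s\in S$. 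At the same time it forces long zero blocks at other levels to defeat randomness, leaving branching room.

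\textbf{Items (iv) through (vi).} For (iv), the plan is a case split on whether the trace loses infinitely many positions. A trace of random $B$ that is co-finite on the schedule, with predictions that are eventually identity-like, yields $\sigma^kB$, and uniqueness of $k$ follows since distinct shifts of a random real are SIR-inequivalent by a measure argument. Otherwise van Lambalgen's theorem gives a strict Turing drop, and one can densely interleave subcolumns indexed by rationals to embed $\mathbb Q$. Incomparability of the columns and the absence of a join also come from van Lambalgen, combined with the fact that any SIR upper bound must be a trace of $B$ using both columns causally. For the second bridge in (v), I would use the $tt$-upper bound on the c.e. spine coming from $\emptyset'$ as a padded coordinate, together with randomness of $R$ relative to $\emptyset'$. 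For (vi), I would embed computable partial orders via computable families of columns of a $\mu_p$-random trace, again using van Lambalgen.

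I expect the main obstacle to be the dichotomy in (iv), specifically showing that any equal-Turing-degree trace is SIR-equivalent to a finite shift.
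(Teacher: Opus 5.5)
Most of your assembly matches the paper: the normal form, simulating truth-table reductions through a reservoir, padding, the spine isomorphism, and van Lambalgen for the $\mathbb Q$-intervals and the poset embeddings. There is, however, a genuine gap in the no-join clause of (iv).

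\textbf{The no-join clause.} Van Lambalgen gives only that the columns $A(n)=R(2n)$ and $B(n)=R(2n+1)$ are Turing, hence SIR, incomparable. It says nothing about least upper bounds. Your supporting claim that every SIR upper bound of $A,B$ is a trace of $R$ is false. The padded $R^*$ reduces both columns (schedules $4n$ and $4n+2$, predictor $0$), yet $R^*\nlesir R$, since $\zeroSeq\lesir R^*$ and $R$ is reservoir-immune.

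What is true is only this: a \emph{least} upper bound $J$ would satisfy $J\lesir R$, because $R$ is itself an upper bound. Together with $R=A\oplus B\leq_T J$ this gives $J\equiv_T R$, so the same-Turing-degree clause yields $J\equivsir\sigma^kR$ for some $k$.

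To finish you need an idea your sketch lacks: \emph{every finite shift} $\sigma^mR$ is a common upper bound of $A$ and $B$. For $n\geq N>m$ use the schedule $s_n=2n-m$ (resp.\ $s_n=2n+1-m$) with predictor $0$. For the finitely many outputs $n<N$, use $s_n=n$ and hardwire the values. Leastness of $J$ against $\sigma^{k+1}R$ then gives $\sigma^kR\lesir\sigma^{k+1}R$, contradicting strictness of the shift chain. So the join is ruled out by this infinite descending family of common upper bounds inside one Turing degree, not by van Lambalgen.

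\textbf{Smaller points.} In (ii), the claim concerns c.e.\ presentations themselves, so padding is not an admissible fallback. The missing fact is elementary. A finite c.e.\ set is computable. An infinite c.e.\ set contains an infinite computable subset, on which the constant-one predictor is always correct, so that subset is a predictable reservoir. Join closure then follows from the spine, since $A\oplus B$ is again c.e. The same fact is what turns non-immunity of some $C_P(X)$ or $E_P(X)$ into a computable reservoir in your partition argument for (iii).

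In (iv), no condition such as ``eventually identity-like predictions'' is needed, and in general none holds. For example, $P(\tau)=\tau(|\tau|-1)$ gives $I_P^B(n)=B(n)\oplus B(n-1)$ for $n\geq1$, which is SIR-equivalent to $B$. Instead proceed as follows:
\begin{itemize}
\item Write $A=I\upharpoonright S$ with $I=I_P^B$, which is random and SIR-equivalent to $B$.
\item Van Lambalgen applied to $I$ forces $S$ to be cofinite when $A\equiv_T B$.
\item Then $A$ is a finite variant of $\sigma^kI$.
\item Finally, $\sigma^kI_P^B\equivsir\sigma^kB$, by hardwiring $B\upharpoonright k$ and inverting the innovation map.
\end{itemize}

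In (vi), leave infinitely many columns unused (the paper indexes only even columns), so that the embedded degrees lie strictly below $X$.

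Your zero-block fusion for the perfect null Cantor set is a valid variant of the paper's dense-test fusion, provided the block starting at position $n$ has length about $n$, so that the union over $n\geq m$ forms a Martin--L\"of test. Your measure argument for strictness of the shift chain is likewise a sound replacement for the paper's reservoir-immunity argument.
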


The clauses are proved in Theorem~\ref{thm:sir-hierarchy},
Theorem~\ref{thm:tt-spine}, Corollary~\ref{cor:two-canonical-regions},
Theorem~\ref{thm:null-cantor-ri}, Theorems~\ref{thm:random-interval-dichotomy}
and~\ref{thm:random-no-join}, Theorem~\ref{thm:ce-random-bridge}, and
Theorem~\ref{thm:bernoulli-poset-universality}.  Together they exhibit an exhaustive partition into the canonical
semilattice spine and the downward-closed RI region, with the random degrees
forming a proper rigid subgeometry of RI.  The padding map $X\mapsto X^*$
provides the canonical bridge within each truth-table degree, while
$\emptyset'\oplus R$ provides a mixed c.e.--random bridge.  Below every
Bernoulli-random trace one additionally obtains universal computable order
structure.

Finally, a delayed batched version of the von Neumann extractor \cite{vonNeumann1951} shows that
every computable Bernoulli-random presentation has a fair-coin
Martin--L\"of-random SIR trace on a computable schedule of positive density.
Consequently, its lower SIR cone contains both a native
$\mu_p$-random universal suborder and an extracted fair-random universal
suborder.  More efficient block extractors, such as those based on Elias's
type-class method \cite{Elias1972}, can improve the density, but rate optimization is not
needed for the degree-theoretic conclusions developed here.

The paper is organized as follows.  Section~2 introduces the model and proves
the exact trace characterization.  Section~3 places SIR among classical
reducibilities.  Sections~4 and~5 establish realization, reservoir immunity, the truth-table
spine, and presentation-sensitivity results.  Section~6 proves Bernoulli-random
universality, the fair-random interval dichotomy, the no-join theorem, and the
c.e.--random bridge.  Section~7 proves the Bernoulli extraction theorem.

\section{Sequential innovation: model and definitions}
\subsection{Predictors, traces, and calibration}
We work in Cantor space $2^\omega$ with fair-coin measure $\lambda$.  For a
finite string $\sigma\in2^{<\omega}$, let
\[
        [\sigma]=\{X\in2^\omega:\sigma\preceq X\}.
\]
All relativized notions have their standard meanings.  We use $K$ for
prefix-free Kolmogorov complexity and $K^A$ for its relativization.  The
effective Hausdorff dimension and strong effective dimension of a real are
\[
        \edim(X)=\liminf_{n\to\infty}
        \frac{K(X\upharpoonright n)}{n},
        \qquad
        \operatorname{Dim}_{\mathrm{eff}}(X)=\limsup_{n\to\infty}
        \frac{K(X\upharpoonright n)}{n}.
\]

\begin{theorem}[Levin--Schnorr]
For every $X\in2^\omega$ and oracle $A$,
\[
X\text{ is $\MartinLof$ random relative to }A
\quad\Longleftrightarrow\quad
K^A(X\upharpoonright n)\geq n-O(1).
\]
\end{theorem}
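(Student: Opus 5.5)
Since the Levin--Schnorr theorem is quoted here as a standard tool, I would give the classical proof, relativized throughout to the oracle $A$. Both directions go through the equivalence between the universal $A$-Martin--L\"of test and the coding theorem / Kraft--Chaitin (KC) machinery for $K^A$. Relativization changes nothing structural: every effective object below becomes $A$-effective, and constants remain independent of $n$.

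For the direction ($\Leftarrow$), I will argue by contraposition. Suppose $X$ is not Martin--L\"of random relative to $A$. Then there is an $A$-Martin--L\"of test $(U_m)_m$ with $X\in\bigcap_m U_m$ and $\lambda(U_m)\le 2^{-2m}$ (passing to the subsequence $U_{2m}$). Write each $U_m$ as an $A$-c.e.\ prefix-free set of strings $\sigma$, so that $\sum_{\sigma\in U_m}2^{-|\sigma|}\le2^{-2m}$. Then form the $A$-c.e.\ request set
\[
\{(|\sigma|-m,\sigma):m\ge1,\ \sigma\in U_m\}.
\]
Its total weight is at most $\sum_m 2^{m}2^{-2m}\le1$. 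By the relativized KC theorem, $K^A(\sigma)\le|\sigma|-m+O(1)$ for every $\sigma\in U_m$. Since $X\in U_m$, some prefix $X\upharpoonright n$ lies in $U_m$ and so has $K^A(X\upharpoonright n)\le n-m+O(1)$. As $m$ is arbitrary, $n-K^A(X\upharpoonright n)$ is unbounded, contradicting the hypothesis $K^A(X\upharpoonright n)\geq n-O(1)$.

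For the direction ($\Rightarrow$), define
\[
V_c=\bigl[\{\sigma:K^A(\sigma)\le|\sigma|-c\}\bigr].
\]
This set is uniformly $A$-c.e.\ in $c$, since $K^A$ is $A$-upper semicomputable. The key estimate is $\lambda(V_c)\le2^{-c}$. To prove it, restrict to the minimal strings $\sigma$ in the generating set, which form a prefix-free family. Then
\[
\lambda(V_c)\le\sum_{\sigma}2^{-|\sigma|}\le2^{-c}\sum_\sigma2^{-K^A(\sigma)}\le2^{-c},
\]
where the last step is the Kraft inequality for the prefix-free machine underlying $K^A$. Hence $(V_c)_c$ is an $A$-Martin--L\"of test. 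If $X$ is $A$-random, then $X\notin V_c$ for some $c$, which gives $K^A(X\upharpoonright n)>n-c$ for all $n$.

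The only point needing care is the measure bound in ($\Rightarrow$), namely passing to a prefix-free generating subset so that Kraft applies. Also, in ($\Leftarrow$) the KC requests must have nonnegative lengths. Requests with $|\sigma|<m$ are impossible when $\lambda(U_m)\le 2^{-2m}$ and $m\ge1$, because such a $\sigma$ alone would contribute measure $2^{-|\sigma|}>2^{-m}\ge2^{-2m}$. Beyond these points, the argument is routine relativization of the unrelativized proof.
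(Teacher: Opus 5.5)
Your proof is correct. It is the standard argument relativized to $A$: in one direction you convert a Martin--L\"of test into a Kraft--Chaitin request set, and in the other you bound the sublevel sets $\{\sigma:K^A(\sigma)\le|\sigma|-c\}$ through a prefix-free generating subfamily and the Kraft inequality. Your checks on nonnegative request lengths and on prefix-freeness are the points that need care. The paper gives no proof of this statement and simply quotes Levin--Schnorr as a classical result, so your argument supplies the standard proof the paper takes for granted.
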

Hence, the effective dimension of a $\MartinLof$ random real is 1.
\begin{definition}[Predictor and innovation]
\label{def:PI}
An $A$-computable predictor is a total $A$-computable function
$P:2^{<\omega}\to\{0,1\}$.  For $X\in2^\omega$, define
\begin{equation}
\label{def:EP}
E_P(X)=\{n:P(X\upharpoonright n)\neq X(n)\},
\end{equation}
its complement 
\begin{equation}
\label{def:CP}
C_P(X)=\{n:P(X\upharpoonright n) = X(n)\},
\end{equation}
and
\[
I_P^X(n)=X(n)\oplus P(X\upharpoonright n).
\]
Thus $E_P(X)=\supp(I_P^X)$.
\end{definition}

Totality is essential: a limit-computable approximation is not an online rule
unless its final value is available at prediction time.

\begin{theorem}[Finite innovation equals relative computability]
\label{thm:finite-innovation}
For all $A,X\in2^\omega$, the following are equivalent:
\begin{enumerate}[label=(\roman*)]
\item $X\leq_TA$;
\item some total $A$-computable predictor makes only finitely many errors on
$X$;
\item some total $A$-computable predictor has innovation sequence of finite
support on $X$.
\end{enumerate}
\end{theorem}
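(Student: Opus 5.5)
The plan is to prove the cycle (i)$\Rightarrow$(ii)$\Rightarrow$(iii)$\Rightarrow$(i). The middle implication is immediate from Definition~\ref{def:PI}: $E_P(X)=\supp(I_P^X)$, so ``finitely many errors'' and ``innovation of finite support'' are the same condition. The two substantive directions are constructing a predictor from a reduction, and recovering $X$ from a predictor with finitely many errors.

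For (i)$\Rightarrow$(ii), suppose $X=\Phi^A$ for some Turing functional $\Phi$. I will define the predictor so that it ignores its input string: set $P(\sigma)=\Phi^A(|\sigma|)$. This is total and $A$-computable, because $\Phi^A$ is total on every argument. Then $P(X\upharpoonright n)=X(n)$ for all $n$, so $E_P(X)=\emptyset$. The only subtlety is that the predictor must be total on all of $2^{<\omega}$, including strings off $X$; this is why I will not let $P$ depend on $\sigma$ at all. The same point is what the remark after Definition~\ref{def:PI} about limit-computable approximations is warning against.

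For (iii)$\Rightarrow$(i), fix a total $A$-computable $P$ and let $N$ exceed every element of the finite set $\supp(I_P^X)$. Hard-code the finite string $\tau=X\upharpoonright N$ as a nonuniform parameter. Then recover $X$ by the recursion
\[
X(n)=\tau(n)\ (n<N),\qquad X(n)=P(X\upharpoonright n)\ (n\ge N).
\]
This uses $A$ only through the total oracle computation of $P$, so $X\leq_T A$. A simple induction on $n$ shows the recursion reproduces $X$, because $I_P^X(n)=0$ for $n\ge N$.

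No step is a genuine obstacle. The one place needing care is the nonuniformity in (iii)$\Rightarrow$(i): the reduction depends on the finite data $N$ and $\tau$, which is harmless for $\leq_T$. The totality hypothesis is what makes the recursion well defined at every stage.
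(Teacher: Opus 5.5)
Your proof is correct and matches the paper's argument: the predictor $P(\sigma)=X(|\sigma|)$ (your $\Phi^A(|\sigma|)$) gives (i)$\Rightarrow$(ii), hard-coding $X\upharpoonright N$ and iterating $X(n)=P(X\upharpoonright n)$ gives the converse, and $E_P(X)=\supp(I_P^X)$ links (ii) and (iii). The only difference is cosmetic: you arrange the implications as a single cycle, whereas the paper proves (i)$\Leftrightarrow$(ii) and then (ii)$\Leftrightarrow$(iii).
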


\begin{proof}
If $X\leq_TA$, use the predictor $P(\sigma)=X(|\sigma|)$.  Conversely, suppose
$P\leq_TA$ and all errors occur before $N$.  Hardwire
$X\upharpoonright N$.  Once $X\upharpoonright n$ has been reconstructed for
$n\geq N$, set $X(n)=P(X\upharpoonright n)$.  This computes $X$ from $A$.
The final equivalence follows from $E_P(X)=\supp(I_P^X)$.
\end{proof}

\begin{proposition}[Innovation homeomorphism]
\label{prop:innovation-homeomorphism}
For every computable predictor $P$, the map
\[
        \Phi_P(X)=I_P^X
\]
is a computable measure-preserving homeomorphism of $2^\omega$.  Indeed,
\[
        I_P^X\equivsir X,
\]
and hence $I_P^X\equiv_TX$.  Moreover,
\[
        K(I_P^X\upharpoonright n)=K(X\upharpoonright n)+O_P(1).
\]
\end{proposition}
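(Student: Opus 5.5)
The plan is to write down the inverse of $\Phi_P$ explicitly and then read off each clause of Proposition~\ref{prop:innovation-homeomorphism} from the two-sided causal structure.

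\textbf{Inverse and homeomorphism.} Given $Y=I_P^X$, we recover $X$ bit by bit: having reconstructed $X\upharpoonright n$, we set $X(n)=Y(n)\oplus P(X\upharpoonright n)$. Define $\Psi_P(Y)$ by this recursion, which is well defined for every $Y\in2^\omega$ because $P$ is total. Then $\Psi_P\circ\Phi_P=\mathrm{id}$ and $\Phi_P\circ\Psi_P=\mathrm{id}$ by induction on $n$. Both maps are computable and prefix-preserving, in the sense that $\Phi_P(X)\upharpoonright n$ depends only on $X\upharpoonright n$ and conversely. They are therefore inverse continuous maps, and both induce length-preserving bijections $\phi_n,\psi_n$ on $2^n$.

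\textbf{Measure preservation.} It suffices to check cylinders. We have $\Phi_P^{-1}([\tau])=[\psi_{|\tau|}(\tau)]$, where $\psi_{|\tau|}(\tau)$ is a single string of length $|\tau|$. Hence $\lambda(\Phi_P^{-1}[\tau])=2^{-|\tau|}=\lambda[\tau]$, and Carathéodory extension gives $\lambda\circ\Phi_P^{-1}=\lambda$.

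\textbf{SIR equivalence.} For $I_P^X\lesir X$ we take the schedule $S=\omega$ with predictor $P$; this is literally a trace $J_{P,\omega}^X=I_P^X$. For the reverse direction we need a computable predictor $Q$ with $I_Q^{Y}=X$, where $Y=I_P^X$. We take $Q(\tau)=P(\psi_{|\tau|}(\tau))$. Then $Y(n)\oplus Q(Y\upharpoonright n)=Y(n)\oplus P(X\upharpoonright n)=X(n)$, so $X=J_{Q,\omega}^{Y}$. Here $Q$ is total computable because $\psi_n$ is computed by the finite recursion above. Thus $I_P^X\equivsir X$. Turing equivalence then follows from the chain $\lesir\Rightarrow\leq_{tt}\Rightarrow\leq_T$ stated in the introduction, or directly from the computability of $\Phi_P$ and $\Psi_P$ as oracle functionals.

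\textbf{Complexity.} The maps $\phi_n$ and $\psi_n$ are computable uniformly in $n$ by a single program depending only on $P$. Hence $K(\phi_n(\sigma))\le K(\sigma)+O_P(1)$, since $n=|\sigma|$ is available from the output of any program for $\sigma$. Symmetrically, $K(\sigma)\le K(\phi_n(\sigma))+O_P(1)$. Applying this with $\sigma=X\upharpoonright n$ gives the $O_P(1)$ equality.

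The only point needing care is making sure the reverse predictor $Q$ is genuinely causal, i.e. that $Q$ uses only $Y\upharpoonright n$. This is exactly the prefix-preservation of $\Psi_P$, so no real obstacle is expected; the rest is routine.
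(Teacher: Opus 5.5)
Your proposal is correct and follows the paper's proof. It uses the full-schedule channel with $P$ for one direction and the predictor $Q(\tau)=P(\widehat\tau)$, built by recursively reconstructing the source prefix, for the other; measure preservation comes from the length-preserving computable bijections on $2^n$, and the complexity identity from the computable inverse pair. Your explicit cylinder preimages and extension-from-cylinders step only add detail the paper leaves implicit.
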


\begin{proof}
The forward reduction $I_P^X\lesir X$ is the full-schedule channel with
predictor $P$.  Given a finite innovation prefix $\tau$, reconstruct the
corresponding source prefix $\widehat\tau$ recursively and put
\[
        Q(\tau)=P(\widehat\tau).
\]
Then
\[
        X(n)=I_P^X(n)\oplus Q(I_P^X\upharpoonright n),
\]
so $X\lesir I_P^X$ by the full schedule.  At every length $n$, the induced
map on $2^n$ is a computable bijection, so it preserves the uniform measure on
length-$n$ cylinders.  The complexity identity follows from the computable
inverse pair.
\end{proof}
A naive way to sparsify a sequence $X$ is to place its bits on an infinite computable set $S={s_0<s_1<\cdots}$, defining
$
Y(s_k)=X(k)
$
and assigning a fixed value $0$ to all remaining coordinates. This preserves the encoded sequence but introduces an artificial computable pattern on the unused coordinates and may therefore destroy the randomness properties that motivated the construction. To retain the sequential nature of the source, we study all computable prediction channels:
\begin{definition}[Prediction channel and trace]
A prediction channel is a pair $(P,S)$, where $P$ is a total computable
predictor and $S=\{s_0<s_1<\cdots\}$ is an infinite computable set. Its trace
on $X$ is
\[
J_{P,S}^X(k)=X(s_k)\oplus P(X\upharpoonright s_k).
\]
The trace deficiency is
\[
d_{P,S}(X,k)=k-K(J_{P,S}^X\upharpoonright k).
\]
\end{definition}

The coordinate projection onto an infinite computable schedule is a computable
measure-preserving map from $2^\omega$ to $2^\omega$.  Together with
Proposition~\ref{prop:innovation-homeomorphism}, this gives the following.

\begin{theorem}[Residual Levin--Schnorr calibration]
\label{thm:residual-ls}
For $X\in2^\omega$, the following are equivalent:
\begin{enumerate}[label=(\roman*)]
\item $X$ is $\MartinLof$ random;
\item every computable sequential innovation trace $J_{P,S}^X$ is $\MartinLof$ random;
\item for every computable prediction channel $(P,S)$,
\[
        K(J_{P,S}^X\upharpoonright k)\geq k-O_{P,S}(1);
\]
\item every trace deficiency $d_{P,S}(X,k)$ is bounded.
\end{enumerate}
The same equivalence holds relative to an arbitrary oracle $A$.
\end{theorem}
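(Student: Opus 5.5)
We prove the equivalences in the cyclic order (i)$\Rightarrow$(ii)$\Rightarrow$(iii)$\Rightarrow$(iv)$\Rightarrow$(i). Each step works verbatim relative to an oracle $A$, since all ingredients relativize.

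For (i)$\Rightarrow$(ii), fix a computable channel $(P,S)$ and write the trace map as a composition $J_{P,S}^X=\pi_S(\Phi_P(X))$. Here $\Phi_P$ is the innovation homeomorphism of Proposition~\ref{prop:innovation-homeomorphism}, and $\pi_S$ is the coordinate projection $Z\mapsto (Z(s_0),Z(s_1),\dots)$. The projection is computable and measure-preserving, because the preimage of a cylinder $[\tau]$ with $|\tau|=k$ is a finite union of cylinders of total measure $2^{-k}$ (fixing $k$ coordinates among the first $s_{k-1}+1$). Proposition~\ref{prop:innovation-homeomorphism} says $\Phi_P$ is a computable measure-preserving homeomorphism, so the composition $F=\pi_S\circ\Phi_P$ is a computable measure-preserving map $2^\omega\to2^\omega$.

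We then invoke randomness preservation. If $(U_m)$ is a Martin-L\"of test capturing $F(X)$, then $(F^{-1}(U_m))$ is uniformly $\Sigma^0_1$, since $F$ is computable and total, so preimages of effectively open sets are effectively open uniformly. It has measure $\lambda(F^{-1}(U_m))=\lambda(U_m)\le 2^{-m}$ and captures $X$. Hence $X\in\MLR$ forces $F(X)\in\MLR$. This is the only step with real content, and the main care needed is the measure-preservation of $\pi_S\circ\Phi_P$. That follows from the length-$n$ bijection property of $\Phi_P$ combined with the cylinder computation for $\pi_S$.

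The implication (ii)$\Rightarrow$(iii) is exactly the Levin--Schnorr theorem applied to each trace $J_{P,S}^X$, with the constant depending on the trace and hence on $(P,S)$. For (iii)$\Rightarrow$(iv), the inequality $K(J\upharpoonright k)\ge k-c$ gives $d_{P,S}(X,k)\le c$. The lower bound is also needed for boundedness: $d_{P,S}$ could be very negative, since $K(J\upharpoonright k)\le k+2\log k+O(1)$, so $d\ge -2\log k-O(1)$ but is not bounded below. We therefore read ``bounded'' as bounded above, which is the sense in which deficiency is measured, and we will state this reading explicitly in the proof.

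For (iv)$\Rightarrow$(i), take the trivial channel with $P\equiv0$ and $S=\omega$. Then $J_{P,S}^X=X$, so bounded deficiency gives $K(X\upharpoonright k)\ge k-O(1)$, and Levin--Schnorr yields $X\in\MLR$.

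For the relativized statement, replace $K$ by $K^A$ and Martin-L\"of tests by $A$-tests throughout. The channels remain computable (or may be taken $A$-computable; the argument is identical), since preimages under a computable map of $\Sigma^{0,A}_1$ sets stay $\Sigma^{0,A}_1$ uniformly.
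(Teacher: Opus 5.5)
Your proof is correct and follows the paper's argument: randomness conservation under the computable measure-preserving channel map $\pi_S\circ\Phi_P$, Levin--Schnorr for the complexity clauses, and the trivial channel $P\equiv0$, $S=\omega$ to return to (i). You also supply the test-pullback and cylinder-measure details that the paper leaves implicit, and your reading of ``bounded'' in (iv) as bounded above is the right one (the paper uses it tacitly): $k-K(J\upharpoonright k)\to-\infty$ for every Martin--L\"of random $J$, so under a two-sided reading (iv) would fail for every random $X$.
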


\begin{proof}
A computable channel is a computable measure-preserving map, so it conserves
$\MartinLof$ randomness.  The complexity equivalence follows from
Levin--Schnorr.  Conversely, the trivial channel $P\equiv0$, $S=\omega$ has
trace $X$.
\end{proof}

\begin{corollary}[Sequential procedures do not derandomize]
\label{cor:random-sir-lower-region}
Let $R$ be fair-coin Martin--L\"of random.  If $Y\lesir R$, then $Y$ is
fair-coin Martin--L\"of random.  Consequently,
\[
\mathcal R_{\mathrm{sir}}
=\{\deg_{\mathrm{sir}}(R):R\in\MLR\}
\]
is downward closed in the SIR degrees.  In particular, no computable or
otherwise non-Martin--L\"of-random real is SIR-reducible to $R$.
\end{corollary}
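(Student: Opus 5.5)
The plan is to apply Theorem~\ref{thm:residual-ls} directly, using the definition of $\lesir$. By definition, $Y\lesir R$ means that $Y=J_{P,S}^R$ for some computable prediction channel $(P,S)$: a total computable predictor $P$ and an infinite computable schedule $S$.

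First, since $R$ is fair-coin Martin--L\"of random, the implication (i)$\Rightarrow$(ii) of Theorem~\ref{thm:residual-ls} shows that every computable sequential innovation trace of $R$ is Martin--L\"of random. In particular $Y$ is. Concretely, the map $X\mapsto J_{P,S}^X$ is the full-schedule innovation homeomorphism $\Phi_P$ of Proposition~\ref{prop:innovation-homeomorphism} followed by coordinate projection onto $S$. Both maps are computable and measure-preserving, so the composite is too, and such maps conserve Martin--L\"of randomness.

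Second, downward closure of $\mathcal R_{\mathrm{sir}}$ requires $\lesir$ to be well-defined on degrees, which holds because SIR degrees are equivalence classes. Suppose $\mathbf a\leq_{\mathrm{sir}}\deg_{\mathrm{sir}}(R)$ with $R\in\MLR$. Pick a representative $Y$ of $\mathbf a$. Then $Y\lesir R$, where any representative of the upper degree may be used, since an equivalent representative $R'\equivsir R$ is itself random by the first step applied to $R'\lesir R$. So $Y\in\MLR$ and $\mathbf a\in\mathcal R_{\mathrm{sir}}$. The same argument shows that every member of a random SIR degree is random, so the class is a union of full degrees. This is the only point needing a little care: the degree-level statement should not depend on the chosen representative.

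Finally, computable reals are not Martin--L\"of random; for example, $K(X\upharpoonright n)\le K(n)+O(1)$ violates Levin--Schnorr. Hence no computable real, and more generally no non-random real, is SIR-reducible to $R$. No step is a genuine obstacle. The only substantive ingredient is the randomness preservation of computable measure-preserving maps, which Theorem~\ref{thm:residual-ls} already supplies.
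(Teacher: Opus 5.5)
Your proof is correct and follows essentially the same route as the paper. You write $Y=J_{P,S}^R$, view the trace map as the innovation homeomorphism $\Phi_P$ of Proposition~\ref{prop:innovation-homeomorphism} followed by projection onto $S$, apply randomness conservation for computable measure-preserving maps, and get downward closure from transitivity together with the fact that SIR-equivalent reals are random together. One small correction: $\lesir$ is defined via last-bit-balanced truth tables, so passing to a channel $(P,S)$ uses Theorem~\ref{thm:exact-trace-characterization} rather than the bare definition.
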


\begin{proof}
Write $Y=J_{P,S}^R$.  By
Proposition~\ref{prop:innovation-homeomorphism}, the map
$R\mapsto I_P^R$ is a computable measure-preserving homeomorphism of fair-coin
Cantor space.  Projection onto the infinite computable schedule $S$ is also a
computable measure-preserving map onto fair-coin Cantor space.  Randomness
conservation therefore gives $Y\in\MLR$.  Downward closure on degrees follows
from transitivity and from invariance of Martin--L\"of randomness under SIR
equivalence.
\end{proof}

Theorem~\ref{thm:residual-ls} is a calibration theorem: the universal
quantifier includes the identity channel.  The more structural issue is to
characterize the transformations represented by nontrivial channels.

\subsection{Sequential-innovation reducibility}
\begin{definition}[Sequential-innovation reducibility]
Let $X,Y\in2^\omega$. We write
\[
Y\leq_{\mathrm{sir}}X
\]
if there exist a total computable strictly increasing function
\[
s:\omega\to\omega
\]
and a uniformly computable family of truth tables
\[
\theta_k:2^{s(k)+1}\to\{0,1\}
\]
such that, for every $k$, every $\sigma\in2^{s(k)}$, and every
$b\in\{0,1\}$,
\[
\theta_k(\sigma b)=b\oplus\theta_k(\sigma0),
\]
and
\[
Y(k)=\theta_k(X\upharpoonright(s(k)+1)).
\]
\end{definition}
The final query is distinguished.  The output may depend arbitrarily on the
past prefix, but changing only the current scheduled bit must flip the output.

\begin{theorem}[Exact trace characterization]
\label{thm:exact-trace-characterization}
For reals $X,Y\in2^\omega$, the following are equivalent:
\begin{enumerate}[label=(\roman*)]
\item There are a computable strictly increasing function $s$ and a family
$(\theta_k)_{k\in\omega}$ of last-bit-balanced truth tables, uniformly
computable in $k$, such that
\[
Y(k)=\theta_k(X\upharpoonright(s(k)+1))
\]
for every $k$.
\item There is a computable prediction channel $(P,S)$ such that
\[
Y=J_{P,S}^X.
\]
\end{enumerate}
\end{theorem}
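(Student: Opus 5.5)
Both implications are direct translations between the two presentations, so the plan is to write down the explicit conversions and check the balancing identity and uniformity in each direction.

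\emph{From (ii) to (i).} Given a computable channel $(P,S)$ with $S=\{s_0<s_1<\cdots\}$, set $s(k)=s_k$. This is total, computable and strictly increasing, since $S$ is an infinite computable set enumerated in increasing order. Define
\[
\theta_k(\tau)=\tau(s_k)\oplus P(\tau\upharpoonright s_k)
\qquad(\tau\in2^{s_k+1}).
\]
This family is uniformly computable in $k$ because $P$ is total computable. For $\sigma\in2^{s_k}$ and $b\in\{0,1\}$ we get $\theta_k(\sigma b)=b\oplus P(\sigma)=b\oplus\theta_k(\sigma0)$, so each table is last-bit-balanced. Finally, $\theta_k(X\upharpoonright(s_k+1))=X(s_k)\oplus P(X\upharpoonright s_k)=J^X_{P,S}(k)$.

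\emph{From (i) to (ii).} Given $s$ and $(\theta_k)$, let $S=\operatorname{ran}(s)$. Since $s$ is computable and strictly increasing, $S$ is an infinite computable set, and its $k$-th element is $s(k)$. Define the predictor on a string $\sigma$ as follows.
\begin{itemize}[label={}]
\item If $|\sigma|=s(k)$ for some $k$, put $P(\sigma)=\theta_k(\sigma0)$.
\item Otherwise put $P(\sigma)=0$.
\end{itemize}
Deciding whether $|\sigma|\in S$, and finding the corresponding $k$, is effective: search $k\le|\sigma|$, using $s(k)\ge k$. Hence $P$ is total computable. The balancing identity with $b=X(s(k))$ then gives
\[
Y(k)=\theta_k(X\upharpoonright s(k)\,X(s(k)))=X(s(k))\oplus\theta_k(X\upharpoonright s(k)\,0)=X(s_k)\oplus P(X\upharpoonright s_k)=J^X_{P,S}(k).
\]

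There is no real obstacle here. The only points needing care are that the tables are total on all of $2^{s(k)+1}$, not just on prefixes of $X$, which makes $P$ total; and that $P$'s values off $S$ are irrelevant to the trace, so setting them to $0$ loses nothing. The balancing condition is exactly what lets the table factor as $b\oplus(\text{a function of the past})$. This is the one step I would write out explicitly, because it is where the condition on $\theta_k$ is actually used.
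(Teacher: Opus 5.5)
Your proof is correct and follows the paper's argument essentially verbatim: from a channel you set $\theta_k(\sigma b)=b\oplus P(\sigma)$, and conversely you define $P(\sigma)=\theta_k(\sigma0)$ at scheduled lengths and $0$ elsewhere, using last-bit balance to factor each table as the fresh bit XOR a function of the past. Your remark that membership of $|\sigma|$ in $S$ is decidable because $s(k)\geq k$ simply spells out the totality step the paper states briefly.
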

\begin{proof}
Suppose $Y=J_{P,S}^X$, where $S=\{s_0<s_1<\cdots\}$.  Define
\[
        \theta_k(\sigma b)=b\oplus P(\sigma)
        \qquad(|\sigma|=s_k).
\]
The family is uniformly computable and last-bit balanced, and
\[
\theta_k(X\upharpoonright(s_k+1))
=X(s_k)\oplus P(X\upharpoonright s_k)=Y(k).
\]

Conversely, suppose $Y\lesir X$ via $s$ and $(\theta_k)$.  For
$|\sigma|=s(k)$, let
\[
        P(\sigma)=\theta_k(\sigma0).
\]
At all other lengths set $P(\sigma)=0$.  Since $s$ is computable and strictly
increasing, $P$ is total computable.  Last-bit balance gives
\[
\theta_k(\sigma b)=b\oplus P(\sigma).
\]
Thus, for $S=\{s(k):k\in\omega\}$,
\[
Y(k)=X(s(k))\oplus P(X\upharpoonright s(k))=J_{P,S}^X(k).
\]
\end{proof}

\begin{proposition}[Sequential-innovation reducibility is a preorder]
\label{prop:sir-preorder}
The relation $\lesir$ is reflexive and transitive.
\end{proposition}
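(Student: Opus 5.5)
Reflexivity is immediate from the channel viewpoint of Theorem~\ref{thm:exact-trace-characterization}. Take the trivial channel $P\equiv0$, $S=\omega$. Then $J_{P,S}^X=X$. Equivalently, in the truth-table form we take $s(k)=k$ and $\theta_k(\sigma b)=b$, which is last-bit balanced and uniformly computable.

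For transitivity, suppose $Z\lesir Y$ via $(t,(\eta_k))$ and $Y\lesir X$ via $(s,(\theta_j))$. The plan is to compose directly in the truth-table normal form. Put $u(k)=s(t(k))$; this is computable and strictly increasing as a composition of strictly increasing computable functions. Define
\[
\zeta_k(\rho)=\eta_k\bigl(\theta_0(\rho\upharpoonright(s(0)+1)),\dots,\theta_{t(k)}(\rho\upharpoonright(s(t(k))+1))\bigr)
\qquad(\rho\in2^{u(k)+1}).
\]
This is well defined because $s(j)+1\le s(t(k))+1$ for $j\le t(k)$. It is uniformly computable in $k$. Substituting $\rho=X\upharpoonright(u(k)+1)$ gives
\[
\eta_k(Y\upharpoonright(t(k)+1))=Z(k).
\]

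The only step needing care, and the expected obstacle, is last-bit balance of $\zeta_k$. Fix $\sigma\in2^{u(k)}$ and flip the final bit $\rho(u(k))$. For $j<t(k)$ we have $s(j)<s(t(k))=u(k)$ by strict monotonicity, so $\theta_j$ reads only positions $<u(k)$ and its value is unchanged. The last argument $\theta_{t(k)}$ reads exactly through position $s(t(k))=u(k)$, and by last-bit balance of $\theta_{t(k)}$ it flips. Hence the string fed to $\eta_k$ changes only in its final coordinate, and balance of $\eta_k$ flips the output. This gives $\zeta_k(\sigma b)=b\oplus\zeta_k(\sigma0)$, so $Z\lesir X$ via $(u,(\zeta_k))$.

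As a cross-check in channel language, the composed predictor at length $u(k)$ recomputes the earlier $Y$-bits $Y\upharpoonright t(k)$ from the $X$-prefix, applies the $Z$-predictor to them, and XORs with $\theta_{t(k)}(\sigma0)$. This again exhibits a total computable predictor on the schedule $\{u(k)\}$.
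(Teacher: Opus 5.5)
Your proof is correct and follows the paper's. The paper composes the schedules as $u_j=s_{t_j}$ and defines the composed predictor $R(\sigma)=P(\sigma)\oplus Q(Y_\sigma\upharpoonright t_j)$ by recomputing $Y\upharpoonright t_j$ from the $X$-prefix, which is exactly your channel-language cross-check; your main argument does the same composition in the truth-table normal form, where you have to verify last-bit balance by hand (you do so correctly), while in the channel form balance holds automatically.
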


\begin{proof}
Reflexivity is witnessed by $s(k)=k$ and
$\theta_k(\sigma b)=b$, equivalently by the trivial channel.

For transitivity, suppose
\[
Y=J_{P,S}^X,
\qquad
Z=J_{Q,T}^Y,
\]
where $S=\{s_0<s_1<\cdots\}$ and $T=\{t_0<t_1<\cdots\}$.  Put
$u_j=s_{t_j}$.  Given a string $\sigma$ of length $u_j$, compute the finite
string $Y_\sigma\upharpoonright t_j$ by
\[
Y_\sigma(i)=\sigma(s_i)\oplus P(\sigma\upharpoonright s_i)
\qquad(i<t_j).
\]
Define, at length $u_j$,
\[
R(\sigma)=P(\sigma)\oplus Q(Y_\sigma\upharpoonright t_j),
\]
and define $R$ arbitrarily at all other lengths.  Then
\[
\begin{aligned}
J_{R,U}^X(j)
&=X(s_{t_j})\oplus P(X\upharpoonright s_{t_j})
   \oplus Q(Y\upharpoonright t_j)\\
&=Y(t_j)\oplus Q(Y\upharpoonright t_j)\\
&=Z(j).
\end{aligned}
\]
Hence $Z\lesir X$.
\end{proof}

This closure is a useful distinction from an ad hoc family of coordinate maps:
sequential innovation traces form a natural sequential reduction preorder.

\section{Position among classical reducibilities}
\begin{definition}[Innovation spectrum]
For $X\in2^\omega$, define
\[
        \Specinn(X)
        =\{\deg_T(J_{P,S}^X):(P,S)\text{ is a computable channel}\}.
\]
By Theorem~\ref{thm:exact-trace-characterization},
\[
        \Specinn(X)=\{\deg_T(Y):Y\lesir X\}.
\]
\end{definition}

\begin{proposition}[SIR-degree invariance of the spectrum]
\label{prop:spectrum-sir-invariant}
If $X\equivsir Y$, then
\[
        \Specinn(X)=\Specinn(Y).
\]
Thus the innovation spectrum is an invariant of the sequential-innovation
degree.
\end{proposition}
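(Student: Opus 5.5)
The proof will be a direct consequence of transitivity of $\lesir$ (Proposition~\ref{prop:sir-preorder}) together with the description $\Specinn(X)=\{\deg_T(Z):Z\lesir X\}$ given by Theorem~\ref{thm:exact-trace-characterization}. The plan is to prove the two inclusions $\Specinn(X)\subseteq\Specinn(Y)$ and $\Specinn(Y)\subseteq\Specinn(X)$. By the symmetry of the hypothesis $X\equivsir Y$, it suffices to prove the first.

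For the first inclusion, take any $\mathbf d\in\Specinn(X)$. By the trace characterization, $\mathbf d=\deg_T(Z)$ for some $Z$ with $Z\lesir X$. Since $X\equivsir Y$, we have $X\lesir Y$. Transitivity (Proposition~\ref{prop:sir-preorder}) then gives $Z\lesir Y$. Explicitly, if $X=J_{P,S}^Y$ and $Z=J_{Q,T}^X$, the composed channel $(R,U)$ with $u_j=s_{t_j}$ witnesses $Z=J_{R,U}^Y$. Hence $\mathbf d=\deg_T(Z)\in\Specinn(Y)$. Exchanging the roles of $X$ and $Y$ yields the reverse inclusion, and so the two spectra are equal.

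The only point needing care is that the spectrum is defined via computable channels, while transitivity was stated for $\lesir$. Theorem~\ref{thm:exact-trace-characterization} identifies these two notions, so I will invoke it explicitly when passing between the two descriptions. I do not expect any real obstacle: the statement is a formal consequence of the preorder structure. The final sentence of the statement, that the spectrum is an invariant of the SIR degree, is just a restatement of the equality.
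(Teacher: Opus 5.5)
Your proposal is correct and is essentially the paper's proof: the paper also derives $Z\lesir Y$ from $Z\lesir X$ and $X\lesir Y$ by transitivity (Proposition~\ref{prop:sir-preorder}) and obtains the reverse inclusion by symmetry. Your explicit appeal to Theorem~\ref{thm:exact-trace-characterization} to pass between channels and $\lesir$ is a harmless extra step, since the paper already records $\Specinn(X)=\{\deg_T(Z):Z\lesir X\}$ in the definition of the spectrum.
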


\begin{proof}
If $Z\lesir X$ and $X\lesir Y$, transitivity gives $Z\lesir Y$; the reverse
inclusion is symmetric.
\end{proof}
The innovation spectrum is not, however, an invariant of the truth-table degree, as will be shown below.

For a presentation $X$, define its truth-table-visible degree set by
\[
        \TTvis(X)=\{\deg_T(B):B\leq_{tt}X\}.
\]
We avoid the term ``cone,'' since this set need not be downward closed under
arbitrary Turing reducibility.

\begin{proposition}[Truth-table visibility bound]
\label{prop:tt-bound}
If $Y\lesir X$, then $Y\leq_{tt}X$.  Consequently,
\[
        \Specinn(X)\subseteq\TTvis(X).
\]
\end{proposition}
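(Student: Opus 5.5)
The plan is to read off a truth-table reduction directly from the definition of $\lesir$, then pass to Turing degrees.

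Suppose $Y\lesir X$. By definition there are a total computable strictly increasing $s$ and a uniformly computable family of last-bit-balanced truth tables $\theta_k:2^{s(k)+1}\to\{0,1\}$ with
\[
Y(k)=\theta_k(X\upharpoonright(s(k)+1))
\]
for every $k$. Equivalently, by Theorem~\ref{thm:exact-trace-characterization}, we may write $Y=J_{P,S}^X$ and take $\theta_k(\sigma b)=b\oplus P(\sigma)$.

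On input $k$, the truth-table reduction proceeds in three steps:
\begin{enumerate}[label=(\arabic*)]
\item compute $s(k)$;
\item output the query set $\{0,1,\dots,s(k)\}$;
\item output the table $\theta_k$ as a finite object, namely its full list of values on $2^{s(k)+1}$.
\end{enumerate}
Since $s$ is total and the family $(\theta_k)$ is uniformly computable, the map $k\mapsto(\text{query set},\text{table})$ is total computable. Evaluating the table on the oracle's answers returns $Y(k)$. This is exactly a truth-table reduction $Y\leq_{tt}X$.

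The only point needing care is the uniformity. We need a single total procedure producing the entire finite table, not merely a procedure evaluating $\theta_k$ pointwise. This is immediate from uniform computability of total $\theta_k$ on the finite domain $2^{s(k)+1}$: we simply evaluate $\theta_k$ on all $2^{s(k)+1}$ strings. The last-bit balance condition plays no role here; it only shows that the resulting reductions form a special subclass.

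For the spectrum inclusion, take any element of $\Specinn(X)$. By the displayed description after the definition of the spectrum, it has the form $\deg_T(Y)$ with $Y\lesir X$. By the first part $Y\leq_{tt}X$, so $\deg_T(Y)\in\TTvis(X)$. Hence $\Specinn(X)\subseteq\TTvis(X)$.
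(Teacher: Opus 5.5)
Your proof is correct and follows the paper's argument. The paper likewise notes that $Y(k)=X(s_k)\oplus P(X\upharpoonright s_k)$ is a total computable function of $X\upharpoonright(s_k+1)$ with computable use $s_k+1$, which is a truth-table reduction, and the spectrum inclusion then follows immediately; your remark about producing the whole finite table uniformly just makes explicit what the paper leaves implicit.
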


\begin{proof}
If $Y=J_{P,S}^X$ and $S=\{s_k\}$, then
\[
        Y(k)=X(s_k)\oplus P(X\upharpoonright s_k)
\]
is computable from $X\upharpoonright(s_k+1)$.  The use $s_k+1$ is computable.
\end{proof}

\begin{theorem}[Reducibility inclusion and degree refinement]
\label{thm:sir-hierarchy}
As binary relations on Cantor space,
\[
\leq_{\mathrm{sir}}\Rightarrow\leq_{tt}
\Rightarrow\leq_{wtt}\Rightarrow\leq_T.
\]
Accordingly,
\[
\equiv_{\mathrm{sir}}\Rightarrow\equiv_{tt}
\Rightarrow\equiv_{wtt}\Rightarrow\equiv_T.
\]
Writing $\mathcal D_\rho$ for the quotient degrees induced by a reducibility
$\rho$, the identity on reals induces natural surjections
\[
\mathcal D_{\mathrm{sir}}\twoheadrightarrow\mathcal D_{tt}
\twoheadrightarrow\mathcal D_{wtt}\twoheadrightarrow\mathcal D_T.
\]
The first map is noninjective: some truth-table degrees split into multiple
SIR degrees.
\end{theorem}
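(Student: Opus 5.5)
The implications are almost immediate, so I would dispatch them first. $Y\lesir X$ gives $Y\leq_{tt}X$ by Proposition~\ref{prop:tt-bound}. The implications $\leq_{tt}\Rightarrow\leq_{wtt}\Rightarrow\leq_T$ are classical: a truth-table reduction has computable use bounded by the size of the truth table, and a wtt reduction is in particular a Turing reduction. Applying each implication in both directions gives the corresponding implications between the equivalence relations. Consequently each equivalence class for a finer relation is contained in a class for the coarser one. The identity on reals therefore descends to well-defined maps $\deg_{\mathrm{sir}}(X)\mapsto\deg_{tt}(X)\mapsto\deg_{wtt}(X)\mapsto\deg_T(X)$. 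These maps are surjective because every degree has a representative real, and they are monotone by the reducibility inclusions.

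The real content is noninjectivity of $\mathcal D_{\mathrm{sir}}\to\mathcal D_{tt}$. For this I need two reals $X\equiv_{tt}X'$ with $X\not\equivsir X'$. I would use the computable degree.

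\begin{itemize}
\item Take $X=\zeroSeq$ and let $R$ be Martin--L\"of random. Then $\zeroSeq\leq_{tt}R$ trivially, with constant truth tables.
\item By Corollary~\ref{cor:random-sir-lower-region}, however, $\zeroSeq\nlesir R$.
\end{itemize}

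This separates $\lesir$ from $\leq_{tt}$ as relations, but noninjectivity needs a pair inside a single tt-degree. I would build that pair with the padding idea. Let $R^*$ be defined by $R^*(2n)=0$ and $R^*(2n+1)=R(n)$. Then $R\equiv_{tt}R^*$ by direct coordinate reading in both directions. Also $R^*\lesir R$ fails, since $R^*$ is not random and the lower cone of $R$ contains only random reals by Corollary~\ref{cor:random-sir-lower-region}. Hence $R\not\equivsir R^*$ while $R\equiv_{tt}R^*$, so the tt-degree of $R$ contains at least two SIR degrees.

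For the strictness of the chain of relations, the argument above already gives $\leq_{\mathrm{sir}}\subsetneq\leq_{tt}$. The strictness of $\leq_{tt}\subsetneq\leq_{wtt}\subsetneq\leq_T$ I would cite from classical results, for instance the known separations of tt from wtt and of wtt from Turing reducibility among sets computable from $\emptyset'$. The statement as given only asserts the inclusions and the noninjectivity, so citing suffices.

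I expect the main obstacle to be checking cleanly that the padded real is non-random. This is routine: $K(R^*\upharpoonright 2n)\leq n+O(\log n)$, which violates the Levin--Schnorr bound. One could alternatively observe that the prefix-free test given by the zero even bits has measure $2^{-n}$. Everything else follows from the already established Proposition~\ref{prop:tt-bound} and Corollary~\ref{cor:random-sir-lower-region}.
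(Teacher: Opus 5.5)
Your proposal is correct and follows the paper's proof. The inclusions come from Proposition~\ref{prop:tt-bound} together with the classical facts, surjectivity comes from choosing representatives, and noninjectivity is witnessed by the same pair $R\equiv_{tt}R^*$ used in Corollary~\ref{cor:tt-degree-splits-sir}. The only variation is minor: you rule out $R^*\lesir R$ via the nonrandomness of $R^*$ and Corollary~\ref{cor:random-sir-lower-region}, whereas the paper uses the computable zero column of $R^*$, transitivity, and the reservoir immunity of $R$ (Proposition~\ref{prop:random-reservoir-immune}); both arguments rest on the same randomness-conservation principle.
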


\begin{proof}
The first inclusion is Proposition~\ref{prop:tt-bound}, and its strictness is
proved in Corollary~\ref{cor:sir-proper}.  The remaining strict inclusions are
the classical separation of truth-table, weak truth-table, and Turing
reducibility; see \cite{Rogers1967,Odifreddi1989}.  The inclusions of equivalence relations follow immediately.  They make the
quotient maps well defined, and surjectivity follows because every degree is
represented by a real.  Noninjectivity of the first quotient map is witnessed
inside one truth-table degree by
Corollary~\ref{cor:tt-degree-splits-sir}; the remaining classical degree
collapses are standard.
\end{proof}
The distinction between truth-table and sequential-innovation reducibility is therefore not one of finite use, but of causality: at the selected coordinate, the output must depend on the current source bit in a balanced way determined by the preceding history. 
The exact trace characterization places sequential-innovation reducibility inside
classical bounded-use computability.  The comparison separates two genuinely
different sources of nonuniformity: sequential balance at the newly revealed bit and
partiality on counterfactual oracle prefixes.

For a presentation $X$, define
\[
\WTTvis(X)=\{\deg_T(B):B\leq_{wtt}X\}.
\]
Then
\[
\Specinn(X)\subseteq\TTvis(X)\subseteq\WTTvis(X).
\]
The first inclusion is Proposition~\ref{prop:tt-bound}.  The second is the
standard implication from truth-table to weak truth-table reducibility.
A useful normal form is the following.  A weak truth-table reduction may be
written as a computable use function $u(k)$ together with uniformly partial
computable maps
\[
 F_k:2^{u(k)}\rightharpoonup\{0,1\}
\]
such that $F_k(X\upharpoonright u(k))=B(k)$.  For truth-table reducibility each
$F_k$ must be total. For sequential-innovation reducibility, after increasing the use if necessary,
there are total computable maps $q_k$ and a strictly increasing computable
schedule $(s_k)_{k\in\omega}$ such that
\begin{equation}
B(k)=X(s_k)\oplus q_k(X\upharpoonright s_k).
\end{equation}
Thus sequential-innovation reducibility is precisely truth-table computation
subject to causality relative to a computable schedule: the value
$q_k(X\upharpoonright s_k)$ is determined before the current source bit
$X(s_k)$ is revealed.
Thus the final input bit is fresh and balanced: changing only $X(s_k)$ flips
the output.

\begin{remark}[Standard bounded-use totalization]
\label{rem:wtt-totalization}
The standard comparison of bounded reducibilities gives
\[
 B\leq_{wtt}X
 \quad\Longrightarrow\quad
 B\leq_{tt}X\oplus\emptyset'.
\]
Indeed, the computable use bound leaves only finitely many counterfactual
oracle-answer patterns on each input, and $\emptyset'$ decides which of the
corresponding finite computations halt; see
\cite{Rogers1967,Odifreddi1989}.  Consequently, if
$\emptyset'\leq_{tt}X$, then
\[
 \TTvis(X)=\WTTvis(X).
\]
\end{remark}

The totality requirement on the sequential rule is essential.  If the rule
were required to converge only along the actual prefixes of \(X\), the
counterfactual truth tables could remain partial and the resulting notion
would be closer to weak truth-table computation than to the uniform
sequential transformation studied here.  Strictness of the first inclusion
will follow from reservoir-immune presentations in
Section~\ref{sec:reservoir-visibility}.

\section{Elementary degree structure and realization}

The identity channel shows that \(\deg_T(X)\) is always the largest degree
represented in \(\Specinn(X)\).  At the opposite extreme, thin Turing lower
cones sharply restrict the spectrum.

\begin{proposition}[Minimal degrees]
If $X$ has minimal nonzero Turing degree $\mathbf a$, then
\[
        \Specinn(X)\subseteq\{\mathbf 0,\mathbf a\}.
\]
Since $\mathbf a\in\Specinn(X)$, its spectrum is either
$\{\mathbf a\}$ or $\{\mathbf0,\mathbf a\}$.
\end{proposition}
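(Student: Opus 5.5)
The plan is to combine the truth-table visibility bound with the definition of a minimal degree, and to use the identity channel for the membership claim. First, take any $Y\lesir X$. By Proposition~\ref{prop:tt-bound}, $Y\leq_{tt}X$, and hence $Y\leq_TX$ by the inclusions of Theorem~\ref{thm:sir-hierarchy}. So $\deg_T(Y)\leq\mathbf a$.

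Minimality of $\mathbf a$ means the only Turing degrees below $\mathbf a$ are $\mathbf 0$ and $\mathbf a$ itself. Therefore $\deg_T(Y)\in\{\mathbf0,\mathbf a\}$. Since $Y$ ranges over all SIR-reducts of $X$, and Theorem~\ref{thm:exact-trace-characterization} gives $\Specinn(X)=\{\deg_T(Y):Y\lesir X\}$, we obtain $\Specinn(X)\subseteq\{\mathbf0,\mathbf a\}$.

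For membership, I will use the trivial channel $P\equiv0$, $S=\omega$. Its trace is $X$ itself, equivalently reflexivity in Proposition~\ref{prop:sir-preorder}, so $\mathbf a=\deg_T(X)\in\Specinn(X)$. The spectrum is therefore a subset of $\{\mathbf0,\mathbf a\}$ that contains $\mathbf a$, which leaves exactly the two stated options.

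There is no real obstacle here. The only point needing care is that SIR-reducibility implies Turing reducibility, and this is supplied by Proposition~\ref{prop:tt-bound}. I will not try to decide which alternative occurs, since the statement does not require it. Later results on predictable reservoirs presumably settle this: $\mathbf0$ belongs to the spectrum exactly when $X$ has a computable trace.
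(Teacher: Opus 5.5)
Your argument is correct and is the intended one. The paper states this proposition without a separate proof: it relies on the identity channel placing $\deg_T(X)$ in $\Specinn(X)$ and on Proposition~\ref{prop:tt-bound} giving $Y\leq_{tt}X$, hence $Y\leq_T X$, for every trace, after which minimality of $\mathbf a$ leaves only $\mathbf 0$ and $\mathbf a$.
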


\subsection{Sparse and countable realization}
The fixed-presentation results above constrain traces of a given real.  We now
turn to the complementary construction problem: which patterns can be built
into a suitable presentation?

\begin{definition}[Sparse coding]
Let $S=\{s_0<s_1<\cdots\}$ be infinite and computable.  Define
\[
\operatorname{Sparse}_S(B)(s_k)=B(k),
\qquad
\operatorname{Sparse}_S(B)(n)=0\quad(n\notin S).
\]
\end{definition}

\begin{theorem}[Sparse innovation representation]
\label{thm:sparse-representation}
Let $P$ be a total computable predictor and let $S$ be an infinite computable
schedule.  For every $B\in2^\omega$ there is $X_B$ such that
\[
        I_P^{X_B}=\operatorname{Sparse}_S(B),
        \qquad
        J_{P,S}^{X_B}=B,
\]
and
\[
        X_B\equiv_TB.
\]
\end{theorem}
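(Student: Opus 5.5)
The plan is to build $X_B$ by inverting the innovation homeomorphism of Proposition~\ref{prop:innovation-homeomorphism}. Put $Z=\operatorname{Sparse}_S(B)$ and define $X_B$ bit by bit through
\[
X_B(n)=Z(n)\oplus P(X_B\upharpoonright n).
\]
Equivalently, $X_B=\Phi_P^{-1}(Z)$, where $\Phi_P^{-1}$ is the computable inverse map from that proposition. Given the prefix $X_B\upharpoonright n$, the recursion yields $X_B(n)$, so $X_B$ is well defined. By construction
\[
I_P^{X_B}(n)=X_B(n)\oplus P(X_B\upharpoonright n)=Z(n)
\]
for all $n$. This gives $I_P^{X_B}=\operatorname{Sparse}_S(B)$.

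The trace identity then follows from the definitions:
\[
J_{P,S}^{X_B}(k)=I_P^{X_B}(s_k)=Z(s_k)=B(k).
\]

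For the Turing equivalence we check two reductions.

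\emph{$B\leq_T X_B$.} Here $B$ is a trace of $X_B$, so $B\lesir X_B$. Proposition~\ref{prop:tt-bound} then gives $B\leq_{tt}X_B$, and hence $B\leq_T X_B$.

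\emph{$X_B\leq_T B$.} First, $Z\leq_T B$: to compute $Z(n)$, decide whether $n\in S$ using the computability of $S$. If $n=s_k$, compute $k$ by enumerating $S$ in increasing order and query $B(k)$; otherwise output $0$. Second, $X_B$ is obtained from $Z$ by the recursion above, which uses only the total computable $P$ together with the bits of $Z$. So $X_B\leq_T Z\leq_T B$.

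The argument is essentially bookkeeping. The one point needing care is that the recursive definition of $X_B$ uses $P$ only on prefixes of $X_B$ already constructed. This is exactly the causality built into the predictor model, and totality of $P$ guarantees the recursion never stalls.
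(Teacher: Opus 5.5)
Your proposal is correct and matches the paper's proof. The paper builds $X_B$ by the same left-to-right recursion: it copies $P$ off $S$ and XORs with $B(k)$ at $s_k$, which is exactly your $X_B(n)=\operatorname{Sparse}_S(B)(n)\oplus P(X_B\upharpoonright n)$. It likewise gets $X_B\leq_T B$ from the construction and $B\leq_T X_B$ from $B=J_{P,S}^{X_B}$; your extra details on $Z\leq_T B$ and the causality of the recursion just make explicit what the paper leaves implicit.
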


\begin{proof}
Construct $X_B$ recursively.  If $n\notin S$, set
\[
        X_B(n)=P(X_B\upharpoonright n).
\]
If $n=s_k$, set
\[
        X_B(s_k)=P(X_B\upharpoonright s_k)\oplus B(k).
\]
The displayed trace identities are immediate.  The construction computes
$X_B$ from $B$, while $B=J_{P,S}^{X_B}$ is computable from $X_B$.
\end{proof}

This representation separates ambient complexity from trace complexity.  If
$S$ has counting function $c_S(N)=|S\cap N|$, then
\[
K(\operatorname{Sparse}_S(B)\upharpoonright N)
\leq K(B\upharpoonright c_S(N))+K(N)+O(1).
\]
Thus a density-zero presentation may have arbitrarily small ambient complexity
rate while its normalized trace is $B$.

\begin{proposition}[Arbitrarily slow ambient complexity in a degree]
Let $B\in2^\omega$ and let $g:\omega\to\omega$ be computable,
nondecreasing, and unbounded.  There is $X\equiv_TB$ such that
\[
        K(X\upharpoonright N\mid N)\leq g(N)+O(1)
\]
for all sufficiently large $N$.
\end{proposition}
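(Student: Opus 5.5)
The plan is to use the sparse coding construction directly, choosing a schedule $S$ so thin that the ambient complexity is controlled by $g$. Fix a computable, strictly increasing schedule $S=\{s_0<s_1<\cdots\}$ growing fast enough relative to $g$. Take the trivial predictor $P\equiv0$, or equivalently $X=\operatorname{Sparse}_S(B)$. By Theorem~\ref{thm:sparse-representation}, this gives $X\equiv_TB$: $X$ is computable from $B$ by inserting the bits of $B$ at the scheduled positions, and $B(k)=X(s_k)$ recovers $B$ from $X$.

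For the complexity bound, note that given $N$ one computes $c_S(N)=|S\cap N|$. The prefix $X\upharpoonright N$ is then determined by $B\upharpoonright c_S(N)$, which is a string of known length $c_S(N)$ once $N$ is given. Hence
\[
K(X\upharpoonright N\mid N)\leq c_S(N)+O(1).
\]
It therefore suffices to pick $S$ with $c_S(N)\leq g(N)$ for all sufficiently large $N$.

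The one step needing care is the construction of $S$, which must be computable, infinite, and have counting function dominated by $g$ even though $g$ grows arbitrarily slowly. Since $g$ is computable, nondecreasing and unbounded, I would define
\[
s_k=\min\{n:g(n)\geq k+1\}.
\]
This is computable by unbounded search, which terminates because $g$ is unbounded. The $s_k$ are nondecreasing, so the schedule can be made strictly increasing by taking instead $s_k=\max(s_{k-1}+1,\min\{n:g(n)\geq k+1\})$. With this choice, $n\geq s_k$ implies $g(n)\geq k+1$. Consequently, if $c_S(N)=m\geq1$, then $s_{m-1}<N$ and so $g(N)\geq m$, giving $c_S(N)\leq g(N)$ for every $N$.

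The additive constant in the displayed bound is independent of $N$, since the decoding machine only needs to compute $c_S(N)$ and read off the literal bits of $B$. This yields $K(X\upharpoonright N\mid N)\leq g(N)+O(1)$ for all sufficiently large $N$, as required. No genuine obstacle is expected; the only subtlety is the strict monotonicity of $S$ when $g$ has long plateaus, which the max-adjustment handles.
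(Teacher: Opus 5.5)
Your proposal is correct and follows the paper's own argument: you take $X=\operatorname{Sparse}_S(B)$ along a computable schedule $S$ with $c_S(N)\leq g(N)$, and you observe that $X\upharpoonright N$ is determined by $N$ and $B\upharpoonright c_S(N)$. Your explicit construction of $S$ from $g$ and the known-length coding bound $K(X\upharpoonright N\mid N)\leq c_S(N)+O(1)$ fill in details that the paper leaves implicit, and both are sound.
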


\begin{proof}
Choose an infinite computable schedule $S$ with
$c_S(N)\leq g(N)$ eventually and put
$X=\operatorname{Sparse}_S(B)$.  Given $N$, the prefix
$X\upharpoonright N$ is determined by $N$ and the first $c_S(N)$ bits of $B$.
\end{proof}

For simultaneous realization, the schedule family must have an effective owner
map.  Fix a computable pairing function $\langle i,k\rangle$ such that, for
each fixed $i$, the map $k\mapsto\langle i,k\rangle$ is strictly increasing.
Let
\[
        C_i=\{\langle i,k\rangle:k\in\omega\}.
\]
The columns $(C_i)$ partition $\omega$, and both the owner $i$ and rank $k$ of
each coordinate are computable.

\begin{theorem}[Countable decoded-column realization]
\label{thm:countable-realization}
Let $P$ be a total computable predictor and let $(B_i)_{i\in\omega}$ be any
sequence of reals.  There is a real $X$ such that
\[
        J_{P,C_i}^X=B_i
        \qquad\text{for every }i,
\]
and
\[
        X\equiv_T\bigoplus_{i\in\omega}B_i.
\]
Consequently,
\[
        \{\deg_T(B_i):i\in\omega\}\subseteq\Specinn(X).
\]
\end{theorem}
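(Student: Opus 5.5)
The plan is to generalize the recursive construction in the proof of Theorem~\ref{thm:sparse-representation} from a single schedule to the computable column partition $(C_i)$. I will define $X$ one coordinate at a time. Given $n$, compute its owner $i$ and rank $k$, so that $n=\langle i,k\rangle$. Then set
\[
        X(n)=P(X\upharpoonright n)\oplus B_i(k).
\]
Since the columns partition $\omega$, every coordinate receives exactly one value, and the recursion is well founded because $X(n)$ depends only on $X\upharpoonright n$.

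\emph{Trace identity.} For each $i$, the $k$-th element of $C_i$ in increasing order is $\langle i,k\rangle$, because $k\mapsto\langle i,k\rangle$ is strictly increasing. By definition of the trace,
\[
J_{P,C_i}^X(k)=X(\langle i,k\rangle)\oplus P(X\upharpoonright\langle i,k\rangle)=B_i(k).
\]
Each $C_i$ is an infinite computable set, so $(P,C_i)$ is a computable channel. This gives $B_i\lesir X$, and by Theorem~\ref{thm:exact-trace-characterization} $\deg_T(B_i)\in\Specinn(X)$, which is the final inclusion.

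\emph{Turing equivalence.} I take $\bigoplus_i B_i$ to mean the real whose value at $\langle i,k\rangle$ is $B_i(k)$, using the same pairing. With that convention the argument is two uniform reductions.

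\begin{itemize}
\item From the join, the recursion above computes $X$ bit by bit: to get $X(n)$ one decodes $(i,k)$, queries the join at $n$, and evaluates the total computable $P$ on the already computed prefix.
\item Conversely, from $X$ one recovers $B_i(k)=X(n)\oplus P(X\upharpoonright n)$ at $n=\langle i,k\rangle$.
\end{itemize}

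The reverse reduction is uniform in $i$. This uniformity is the step that needs care: it relies on the owner and rank maps being computable, so that a single procedure computes the whole join rather than each $B_i$ separately. If the paper's join used a different coding, I would compose with the computable bijection between the two codings. This is the only point where an obstacle could arise, and it is routine. In fact this shows the stronger relation $X\equiv_{tt}\bigoplus_iB_i$, with use bounded by $n+1$ in both directions.
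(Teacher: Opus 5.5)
Your proposal is correct and follows the paper's proof essentially verbatim: the same recursion $X(\langle i,k\rangle)=P(X\upharpoonright\langle i,k\rangle)\oplus B_i(k)$. The computable owner and rank maps then give the trace identities, and they also give both directions of $X\equiv_T\bigoplus_iB_i$. Your extra remark that both reductions are truth-table with use $n+1$ is also correct; with your coding of the join, $\bigoplus_iB_i$ is in fact exactly the innovation sequence $I_P^X$.
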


\begin{proof}
At coordinate $n=\langle i,k\rangle$, set recursively
\[
        X(n)=P(X\upharpoonright n)\oplus B_i(k).
\]
The inverse pairing map makes the construction effective from
$\bigoplus_iB_i$.  It gives $J_{P,C_i}^X=B_i$.  Conversely, all columns $B_i$
are uniformly recoverable from $X$, so their effective join is computable from
$X$.
\end{proof}

\begin{remark}
A uniformly computable family of pairwise disjoint computable sets need not have
a decidable union or a computable owner map.  Pairwise disjointness gives
uniqueness of an owner, not an effective method for finding it.  The canonical
columns above make the recursive construction total and uniform.
\end{remark}

\begin{corollary}[Countable complexity profiles]
For any sequence $(B_i)$, a single presentation can satisfy
\[
        K(J_{P,C_i}^X\upharpoonright k)
        =K(B_i\upharpoonright k)+O_i(1)
\]
for every $i$.  Thus computable, $K$-trivial, intermediate-dimension,
\MartinLof-random, and complete traces can coexist in one suitably constructed
nonrandom presentation.
\end{corollary}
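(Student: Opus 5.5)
The statement is the corollary on countable complexity profiles. The plan is to apply Theorem~\ref{thm:countable-realization} to the given sequence $(B_i)_{i\in\omega}$ with any fixed total computable predictor $P$, for instance $P\equiv0$. This gives a real $X$ with $J_{P,C_i}^X=B_i$ for every $i$. Consequently, for every $i$ and $k$,
\[
K(J_{P,C_i}^X\upharpoonright k)=K(B_i\upharpoonright k),
\]
with zero error term. The first claim therefore holds trivially, and the constant $O_i(1)$ can even be taken to be $0$. If one prefers a statement robust under changes of channel description, one would note only that $J_{P,C_i}^X$ is literally the real $B_i$, so no coding overhead arises.

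For the second claim, choose a specific sequence of columns:
\begin{itemize}
\item $B_0$ computable, say $\zeroSeq$;
\item $B_1$ a noncomputable $K$-trivial real;
\item $B_2$ a real of effective dimension strictly between $0$ and $1$, for example a random real diluted by inserting zeros at a computable rate;
\item $B_3$ a \MartinLof-random real;
\item $B_4=\emptyset'$;
\item all remaining $B_i$ computable.
\end{itemize}
Each of these exists by standard results, and Theorem~\ref{thm:countable-realization} places all of them as traces of one $X$.

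The only step requiring an argument is that $X$ is nonrandom. I would do this directly rather than through degrees. On column $C_0$, the trace is $\zeroSeq$, so $X(n)=P(X\upharpoonright n)$ for all $n\in C_0$. The set of reals satisfying this constraint on the first $m$ elements of $C_0$ is a $\Sigma^0_1$ class, and it is in fact clopen and computable uniformly in $m$. Each new element of $C_0$ is a prediction the real must obey, and conditionally on the preceding prefix this halves the measure. So the class for $m$ constraints has measure $2^{-m}$. These classes form a \MartinLof\ test containing $X$, hence $X\notin\MLR$.

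Alternatively, the nonrandomness follows from Corollary~\ref{cor:random-sir-lower-region}. If $X$ were random, every SIR trace of $X$ would be random, but $\zeroSeq=J_{P,C_0}^X$ is a computable trace. I expect this to be the cleanest route, and no step presents a real obstacle.
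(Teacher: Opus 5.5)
Your proposal is correct and matches the paper's argument: the paper gives no separate proof, reading the corollary directly off Theorem~\ref{thm:countable-realization}, which gives $J_{P,C_i}^X=B_i$ exactly, so the error term is in fact zero. Nonrandomness then follows, as in your second route, from Corollary~\ref{cor:random-sir-lower-region} applied to the computable trace on $C_0$; your direct Martin--L\"of test is a valid elementary unpacking of the same fact.
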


\begin{corollary}[Recursive tree realization]
Let $T\subseteq\omega^{<\omega}$ be a computable tree and let
$(B_\tau)_{\tau\in T}$ be a family of reals with effective join.  Then some
real $X$ has
\[
        \{\deg_T(B_\tau):\tau\in T\}\subseteq\Specinn(X),
\]
and $X$ is Turing equivalent to the effective join of the family.
\end{corollary}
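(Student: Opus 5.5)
We reduce to Theorem~\ref{thm:countable-realization}, applied to the family indexed by the nodes of $T$. Since $T\subseteq\omega^{<\omega}$ is a computable tree, it is in particular a computable set of finite sequences. We therefore fix a computable enumeration of it.

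If $T$ is infinite, we take a computable bijection $e:\omega\to T$, obtained by listing $\omega^{<\omega}$ in a standard computable order and keeping the members of $T$. If $T$ is finite but nonempty, we instead use a computable surjection $e:\omega\to T$ that repeats the last node. If $T=\emptyset$, the statement is trivial: take $X$ computable, and the effective join of the empty family is computable. We then put $B_i=B_{e(i)}$. The hypothesis that the family has an effective join means that $\bigoplus_{\tau\in T}B_\tau$, indexed by codes of nodes, is a single real from which each $B_\tau$ is uniformly recoverable, and conversely.

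Next we apply Theorem~\ref{thm:countable-realization} with any total computable predictor, say $P\equiv0$, and the reindexed sequence $(B_i)$. This gives a real $X$ with $J_{P,C_i}^X=B_i$ for every $i$, and
\[
X\equiv_T\bigoplus_iB_i .
\]
The inclusion $\{\deg_T(B_\tau):\tau\in T\}\subseteq\Specinn(X)$ then follows at once: every $\tau\in T$ equals $e(i)$ for some $i$, and $B_\tau=J_{P,C_i}^X$ is a trace of $X$, so its degree lies in $\Specinn(X)$ by definition.

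It remains to check that $\bigoplus_iB_i\equiv_T\bigoplus_{\tau\in T}B_\tau$. Both translations are computable because $e$ is computable and, in the bijective case, $e^{-1}$ is computable too: given $\tau$, we search the list for it, and the search terminates because $T$ is decidable. In the finite case, repeated columns are harmless in both directions. This is the only point needing care, and decidability of $T$ is exactly what makes the reindexing effective. So $X$ is Turing equivalent to the effective join of the family.
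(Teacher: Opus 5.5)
Your proposal is correct and is the same argument as the paper's, which simply enumerates the nodes of $T$ computably and applies Theorem~\ref{thm:countable-realization} to the corresponding canonical columns. Your treatment of finite (or empty) $T$ and your check that reindexing preserves the Turing degree of the join only spell out details the paper leaves implicit.
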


\begin{proof}
Enumerate the nodes of $T$ computably and apply
Theorem~\ref{thm:countable-realization} to the corresponding canonical columns.
\end{proof}

\section{Reservoir immunity, presentation sensitivity, and visibility}
\label{sec:reservoir-visibility}

The obstruction to representing an arbitrary truth-table computation is the
unavailable current bit.  A truth-table evaluator may use that bit in an
arbitrary way, whereas a sequential rule must commit before seeing it.  A
computably predictable set of future sites removes this obstruction.

\begin{definition}[Predictable reservoir]
An infinite computable set $C=\{c_0<c_1<\cdots\}$ is a \emph{predictable
reservoir} for $X$ if there is a total computable predictor $Q$ such that
\[
        Q(X\upharpoonright c_j)=X(c_j)
\]
for every $j$.
\end{definition}

\begin{lemma}[Two-sided infinitude]
\label{lem:two-sided-infinitude}
Let $X$ be noncomputable and let $P$ be a total computable predictor.  Then
both
$C_P(X), E_P(X)$ defined in Definition~\ref{def:PI}
are infinite.
\end{lemma}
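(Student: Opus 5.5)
Both halves go by contradiction. For $E_P(X)$ we invoke Theorem~\ref{thm:finite-innovation} (taking $A$ computable). For $C_P(X)$ we use the complementary predictor $\bar P=1-P$, for which $E_{\bar P}(X)=C_P(X)$, and apply the same theorem again.

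First, suppose $E_P(X)$ were finite. Then the total computable predictor $P$ makes only finitely many errors on $X$. By Theorem~\ref{thm:finite-innovation} with $A=\emptyset$, implication (ii)$\Rightarrow$(i) gives $X\leq_T\emptyset$, so $X$ is computable. This contradicts the hypothesis. The explicit argument is the one in that proof: hardwire $X\upharpoonright N$ past the last error, then continue with $X(n)=P(X\upharpoonright n)$.

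Second, suppose $C_P(X)$ were finite. Define $\bar P(\sigma)=1-P(\sigma)$. This is again a total computable predictor, and for every $n$,
\[
\bar P(X\upharpoonright n)\neq X(n)\iff P(X\upharpoonright n)=X(n).
\]
Hence $E_{\bar P}(X)=C_P(X)$ is finite, and the first case applied to $\bar P$ again makes $X$ computable, a contradiction.

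There is no real obstacle here. The only point to check is that complementing a total computable predictor preserves totality and computability, which is immediate. Since $C_P(X)$ and $E_P(X)$ partition $\omega$, the lemma says precisely that a noncomputable $X$ forces every computable predictor to be both right and wrong infinitely often.
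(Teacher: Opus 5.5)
Your proof is correct and follows the paper's argument: you rule out finiteness of $E_P(X)$ by hard-coding the finitely many corrections and reconstructing $X$ computably, which is what citing Theorem~\ref{thm:finite-innovation} with a computable oracle amounts to. You then reduce finiteness of $C_P(X)$ to that case via the complementary predictor $\overline P=1-P$, for which $E_{\overline P}(X)=C_P(X)$.
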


\begin{proof}
If $E_P(X)$ were finite, then the full innovation sequence
\[
 I_P^X(n)=X(n)\oplus P(X\upharpoonright n)
\]
would have finite support.  Hard-coding that finite support and reconstructing
$X$ recursively from left to right would make $X$ computable.  If $C_P(X)$
were finite, the complementary predictor
$\overline P(\sigma)=1-P(\sigma)$ would make only finitely many errors, and
the same argument would apply.
\end{proof}

Recall that an infinite set $A\subseteq\omega$ is \emph{immune} if it has no
infinite computably enumerable subset, and is \emph{bi-immune} if both $A$
and its complement are immune~\cite{Rogers1967,Odifreddi1989}.  Every infinite
c.e. set contains an infinite computable subset.  Hence, for an infinite set
$A$, immunity is equivalent to the absence of an infinite computable subset
of $A$.  This equivalence concerns inclusion of subsets of $A$ and makes no
assertion about Turing-degree comparability between $A$ and the c.e. sets.

\begin{definition}[Reservoir immunity]
A real $X$ is \emph{reservoir-immune} if, for every total computable predictor
$P$, both $C_P(X)$ and $E_P(X)$ are immune.  Equivalently, every full
innovation sequence $I_P^X$ is bi-immune.
\end{definition}

\begin{proposition}[Computable traces and predictable reservoirs]
\label{prop:computable-trace-reservoir}
For every real $X$, the following are equivalent:
\begin{enumerate}
\item $X$ has a predictable reservoir;
\item $\zeroSeq\lesir X$;
\item $\mathbf 0\in\Specinn(X)$;
\item Some computable sequential innovation trace is obtainable from $X$;
\item $X$ is not reservoir-immune.
\end{enumerate}
\end{proposition}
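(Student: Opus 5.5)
We prove the equivalences by a cycle, relying on Theorem~\ref{thm:exact-trace-characterization} to move between channels and SIR reductions. Items (2), (3), (4) are close restatements of one another. Item (2) says that the computable real $\zeroSeq$ is a trace $J_{P,S}^X$. Item (4) says that \emph{some} computable real $Z$ is a trace. Item (3) says that some trace has Turing degree $\mathbf 0$, i.e.\ is computable. So (2)$\Rightarrow$(4)$\Rightarrow$(3) is immediate.

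For (3)$\Rightarrow$(2), suppose $Z=J_{P,S}^X$ is computable. Replace $P$ by $P'(\sigma)=P(\sigma)\oplus Z(k)$ at lengths $|\sigma|=s_k$; this is legitimate because $k$ is computable from $|\sigma|$ and $Z$ is computable. Then $J_{P',S}^X=Z\oplus Z=\zeroSeq$.

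For (1)$\Leftrightarrow$(2): a trace identically zero means $X(s_k)=P(X\upharpoonright s_k)$ for all $k$. Thus $S$ with $Q=P$ is a predictable reservoir, and conversely a reservoir $(C,Q)$ yields the channel $(Q,C)$ whose trace is $\zeroSeq$.

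It remains to link (1) with (5). For (1)$\Rightarrow$(5), a reservoir $C$ with predictor $Q$ is an infinite computable subset of $C_Q(X)$, so $C_Q(X)$ is not immune and $X$ is not reservoir-immune.

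For (5)$\Rightarrow$(1), suppose that for some total computable $P$ either $C_P(X)$ or $E_P(X)$ fails to be immune. First we must dispose of the degenerate case: immunity is defined for infinite sets, so if one of the two sets is finite then, by Lemma~\ref{lem:two-sided-infinitude}, $X$ is computable. In that case $C=\omega$ with $Q(\sigma)=X(|\sigma|)$ is a reservoir. Otherwise both sets are infinite, and one of them contains an infinite c.e.\ subset, hence (as recalled before the definition) an infinite computable subset $C$. If $C\subseteq C_P(X)$, take $Q=P$. If $C\subseteq E_P(X)$, take $Q=\overline P=1-P$, which is correct exactly on the error set. Either way $C$ is a predictable reservoir.

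The only delicate point is this bookkeeping about finite versus immune sets and the convention that a finite set does not count as immune. Handling the computable-$X$ case separately, via Lemma~\ref{lem:two-sided-infinitude}, is what makes the definition of reservoir immunity well-posed. Everything else is routine definition chasing.
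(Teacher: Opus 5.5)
Your proof is correct and takes essentially the paper's route: it uses the XOR correction by the computable trace (the paper applies it for (4)$\Rightarrow$(1), you for (3)$\Rightarrow$(2)), identifies identically-zero traces with predictable reservoirs, and proves (5)$\Rightarrow$(1) with the complement predictor, using Lemma~\ref{lem:two-sided-infinitude} for the degenerate case. The differences are cosmetic: you order the implications differently, and you split on whether $C_P(X)$ or $E_P(X)$ is finite rather than on whether $X$ is computable.
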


\begin{proof}
A predictable reservoir immediately yields the constant-zero trace, so
$(1)\Rightarrow(2)\Rightarrow(3)\Rightarrow(4)$.  Suppose that
$B=J_{P,S}^X$ is computable, where $S=\{s_0<s_1<\cdots\}$.  Define a total
computable predictor $Q$ on scheduled lengths by
\[
 Q(\sigma)=P(\sigma)\oplus B(k)
 \qquad\text{when }|\sigma|=s_k,
\]
and define it arbitrarily elsewhere.  Then
$Q(X\upharpoonright s_k)=X(s_k)$ for every $k$, so $S$ is a predictable
reservoir.  Thus $(4)\Rightarrow(1)$.

Finally, suppose first that $X$ is computable.  Then the full schedule is a
predictable reservoir, witnessed by the computable rule
$Q(\sigma)=X(|\sigma|)$.  Now suppose that $X$ is noncomputable.  By
Lemma~\ref{lem:two-sided-infinitude}, both $C_P(X)$ and $E_P(X)$ are infinite
for every total computable predictor $P$.  Hence, if $X$ is not
reservoir-immune, then for some $P$ one of these sets contains an infinite
c.e. subset and therefore an infinite computable subset.  In the second case
replace $P$ by its complement.  This gives a predictable reservoir.  The
converse is immediate.
\end{proof}

\begin{corollary}[The sequential refinement is proper]
\label{cor:sir-proper}
The relation $\lesir$ is a proper refinement of truth-table reducibility.
Indeed,
\[
 \zeroSeq\leq_{tt}X
\]
for every $X$, whereas
\[
 \zeroSeq\lesir X
 \quad\Longleftrightarrow\quad
 X\text{ is not reservoir-immune}.
\]
In particular, if $R$ is $\MartinLof$ random, then
$\zeroSeq\leq_{tt}R$ but $\zeroSeq\nleq_{\mathrm{sir}} R$.
\end{corollary}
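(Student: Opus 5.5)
The corollary has three parts: $\zeroSeq\leq_{tt}X$ for every $X$; $\zeroSeq\lesir X$ holds exactly when $X$ is not reservoir-immune; and a Martin--L\"of random $R$ separates the two reducibilities. I would prove them in that order and then combine them.

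First, $\zeroSeq\leq_{tt}X$ for every $X$. The constant truth tables $\theta_k\equiv0$, with any computable use such as $k+1$, are total and uniformly computable, so they compute $\zeroSeq$ from any oracle.

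Second, the equivalence $\zeroSeq\lesir X\iff X$ is not reservoir-immune is exactly items (2) and (5) of Proposition~\ref{prop:computable-trace-reservoir}, which I take as given. By Proposition~\ref{prop:tt-bound} we already have $\lesir\subseteq\leq_{tt}$. So properness only needs one real $X$ with $\zeroSeq\nleq_{\mathrm{sir}}X$, that is, one reservoir-immune real.

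Third, I would show that a Martin--L\"of random $R$ is reservoir-immune. The quickest route is Corollary~\ref{cor:random-sir-lower-region}: every $Y\lesir R$ is Martin--L\"of random. The computable real $\zeroSeq$ is not Martin--L\"of random, so $\zeroSeq\nleq_{\mathrm{sir}}R$.

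As a cross-check, I would also argue directly through Proposition~\ref{prop:computable-trace-reservoir}. Suppose $R$ had a predictable reservoir $C=\{c_0<c_1<\cdots\}$ with predictor $Q$. Then the trace $J_{Q,C}^R$ would be identically zero. But by Theorem~\ref{thm:residual-ls} every computable trace of $R$ is random, so this is impossible. Either route gives $\zeroSeq\leq_{tt}R$ and $\zeroSeq\nleq_{\mathrm{sir}}R$, so the inclusion $\lesir\subseteq\leq_{tt}$ is strict.

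No step is a real obstacle, since the earlier results do all the work. The only point to check is that the randomness conservation in Corollary~\ref{cor:random-sir-lower-region} genuinely applies to an arbitrary computable schedule. It does, because projection onto an infinite computable set preserves the fair-coin measure, as recorded before Theorem~\ref{thm:residual-ls}.
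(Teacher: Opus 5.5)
Your proposal is correct and follows the route the paper intends. The paper gives no separate proof: the corollary is read off from Proposition~\ref{prop:computable-trace-reservoir}, together with the fact that every SIR trace of a Martin--L\"of random real is random (Theorem~\ref{thm:residual-ls}, Corollary~\ref{cor:random-sir-lower-region}), or equivalently that random reals are reservoir-immune (Proposition~\ref{prop:random-reservoir-immune}); your constant truth-table witness and your use of Proposition~\ref{prop:tt-bound} for the inclusion complete the argument without gaps.
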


\begin{proposition}[Randomness implies reservoir immunity]
\label{prop:random-reservoir-immune}
Every \MartinLof-random real is reservoir-immune.
\end{proposition}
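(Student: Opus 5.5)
The plan is to argue by contraposition through Proposition~\ref{prop:computable-trace-reservoir}. Suppose $R$ is \MartinLof\ random but not reservoir-immune. Since $R$ is random, it is noncomputable, so Proposition~\ref{prop:computable-trace-reservoir} applies in full: $R$ has a predictable reservoir, and hence $\zeroSeq\lesir R$. That is, some computable channel $(Q,C)$ has $J_{Q,C}^R=\zeroSeq$.

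The first route finishes via Corollary~\ref{cor:random-sir-lower-region}. Every SIR trace of a fair-coin random real is itself fair-coin \MartinLof\ random. But $\zeroSeq$ is computable and therefore not random, which is a contradiction.

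Equivalently, one can use Theorem~\ref{thm:residual-ls}. The trace $J_{Q,C}^R=\zeroSeq$ satisfies $K(\zeroSeq\upharpoonright k)\leq K(k)+O(1)=O(\log k)$. Hence the trace deficiency $d_{Q,C}(R,k)\geq k-O(\log k)$ is unbounded, which contradicts clause (iv) of that theorem for the random real $R$.

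For a self-contained argument that avoids the randomness-conservation lemma, I would instead build a \MartinLof\ test directly. Let $C=\{c_0<c_1<\cdots\}$ be the reservoir with predictor $Q$, and set
\[
U_m=\{X: Q(X\upharpoonright c_j)=X(c_j)\text{ for all }j<m\}.
\]
Each $U_m$ is a finite union of cylinders of length $c_{m-1}+1$, uniformly computable in $m$. Conditioning successively on the prefix before each $c_j$ gives $\lambda(U_m)=2^{-m}$. Then $(U_m)$ is a \MartinLof\ test containing $R$, a contradiction.

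The only delicate point is the measure computation. It reduces to the fact that at each scheduled site the fresh bit $X(c_j)$ is fair and independent of the earlier prefix, so it matches the committed prediction with conditional probability exactly $1/2$. I therefore expect no real obstacle; the main bookkeeping is checking that noncomputability of $R$ justifies invoking the equivalence $(5)\Rightarrow(1)$ in Proposition~\ref{prop:computable-trace-reservoir}.
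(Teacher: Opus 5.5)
Your argument is correct, but it is routed differently from the paper's.

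The paper never mentions predictable reservoirs. It fixes $P$ and notes that $I_P^R$ is \MartinLof\ random, because $X\mapsto I_P^X$ is a computable measure-preserving bijection (Proposition~\ref{prop:innovation-homeomorphism}). It then uses the standard fact that random reals are bi-immune. The witness is the null test that forces successively more elements of an infinite c.e.\ set $W$ to take the value $1$ (or $0$).

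You instead turn failure of reservoir immunity into $\zeroSeq\lesir R$ via Proposition~\ref{prop:computable-trace-reservoir}. You then contradict either the SIR downward closure of randomness (Corollary~\ref{cor:random-sir-lower-region}) or the calibration Theorem~\ref{thm:residual-ls}. Neither cited result depends on the statement being proved, so there is no circularity.

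Your explicit test $U_m$ is the paper's bi-immunity test pulled back along $X\mapsto I_P^X$, with $Q=P$ or $Q=1-P$ and the c.e.\ set replaced by a computable subset. So at bottom the two proofs coincide. What each buys:
\begin{itemize}
\item The paper's version handles c.e.\ subsets directly. It needs neither Lemma~\ref{lem:two-sided-infinitude} nor the passage to a reservoir.
\item Your version reduces the proposition to the observation that $\zeroSeq$ is not random.
\item Your exact count $\lambda(U_m)=2^{-m}$ avoids randomness conservation altogether.
\end{itemize}

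One small bookkeeping correction: Proposition~\ref{prop:computable-trace-reservoir} is stated for every real, so invoking $(5)\Rightarrow(1)$ needs no noncomputability check. Noncomputability is used only inside its proof.
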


\begin{proof}
For each total computable predictor $P$, the full innovation transform
$X\mapsto I_P^X$ is a computable measure-preserving bijection of Cantor space.
Hence $I_P^R$ is $\MartinLof$ random whenever $R$ is.  Every \MartinLof-
random real is bi-immune: if an infinite c.e. set $W$ were contained in the
real, then requiring successively more elements of $W$ to be $1$ would give an
effective null test; the same argument applies to the complement.  Therefore
$I_P^R$ is bi-immune for every $P$.
\end{proof}

\begin{remark}[Ordinary bi-immunity is insufficient]
Reservoir immunity is strictly stronger than ordinary bi-immunity.  Let $B$ be
bi-immune and define
\[
 X(2n)=X(2n+1)=B(n).
\]
Then $X$ is bi-immune, since projecting an infinite c.e. subset of $X$ or its
complement under $m\mapsto\lfloor m/2\rfloor$ would produce an infinite c.e.
subset of $B$ or its complement.  Nevertheless, the odd positions form a
predictable reservoir: the predictor that copies the preceding bit is correct
at every odd coordinate.  Thus ordinary bi-immunity blocks reservoirs for the
constant predictors, but not reservoirs created by sequential dependence on the
past.
\end{remark}

\begin{theorem}[Perfect RI Cantor sets inside effectively null classes]
\label{thm:null-cantor-ri}
Every nonempty basic cylinder $[\rho]\subseteq2^\omega$ contains a perfect
Cantor set
\[
\mathcal C\subseteq \mathrm{RI}\setminus\MLR.
\]
More precisely, one may choose a binary family of nonempty clopen sets
$(C_\tau)_{\tau\in2^{<\omega}}$ and a uniformly effectively open sequence
$(V_s)_{s\in\omega}$ such that:
\begin{enumerate}[label=(\roman*)]
\item $C_\varnothing\subseteq[\rho]\cap V_0.$
\item $C_{\tau0}$ and $C_{\tau1}$ are disjoint subsets of $C_\tau$.
\item for every $|\tau|=s>0$,
\[
 C_\tau\subseteq V_s,
 \qquad
 \lambda(C_\tau)\leq 2^{-2s}.
\]
\item $\lambda(V_s)\leq 2^{-s}$ for every $s$;
\item every member of
\[
 \mathcal C=\bigcap_{s\in\omega}\bigcup_{|\tau|=s}C_\tau
\]
is reservoir-immune.
\end{enumerate}
Consequently $(V_s)$ is a Martin--L\"of test covering $\mathcal C$, so no
member of $\mathcal C$ is Martin--L\"of random. No effectiveness is asserted for the fusion tree
$\tau\mapsto C_\tau$; the effective nullity refers to the uniformly
effectively open Martin--L\"of test $(V_s)$ covering $\mathcal C$.
\end{theorem}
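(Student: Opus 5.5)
The plan is a non-effective fusion (finite-extension) construction. Every clopen set $C_\tau$ will be a basic cylinder $[\sigma_\tau]$. The randomness failure is arranged by forcing each $\sigma_\tau$ to be highly compressible. Reservoir immunity is arranged by diagonalizing, at each level, against one pair (predictor, candidate reservoir).

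\emph{Test.} Take the standard uniformly effectively open sets
\[
V_s=\{X:\exists n\ K(X\upharpoonright n)<n-s\},
\]
which satisfy $\lambda(V_s)\leq2^{-s}$ by the Kraft inequality, and let $V_0=2^\omega$. A cylinder $[\sigma]$ lies inside $V_s$ as soon as $K(\sigma)<|\sigma|-s$. This can always be achieved by appending a sufficiently long block of zeros to any given string: the complexity of $\sigma 0^m$ grows like $K(\sigma)+K(m)$, while its length grows like $m$. The same padding makes $|\sigma_\tau|\geq 2s$, so $\lambda(C_\tau)\leq2^{-2s}$ is automatic.

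\emph{Requirements.} Fix a (non-effective) enumeration $(P_e,D_e)_{e\in\omega}$ of all pairs consisting of a total computable predictor and an infinite computable set. Requirement $\mathcal R_e$ asks that $D_e\not\subseteq C_{P_e}(X)$ and $D_e\not\subseteq E_{P_e}(X)$. Equivalently, $X$ must have one point of $D_e$ where $P_e$ predicts correctly and another where it errs. Meeting every $\mathcal R_e$ makes every $C_P(X)$ and $E_P(X)$ contain no infinite computable subset. Since these sets are infinite (or, when one of them is finite, vacuously fine), this means they are immune. Here I use the paper's remark that immunity of an infinite set is equivalent to having no infinite computable subset. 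So every member of $\mathcal C$ is reservoir-immune.

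\emph{Construction.} Start with $\sigma_\varnothing=\rho$, which trivially lies in $V_0$. Given $\sigma_\tau$ with $|\tau|=s$, form two children as follows.
\begin{enumerate}[label=(\alph*)]
\item Split: put $\sigma_{\tau i}^0=\sigma_\tau i$. This gives disjoint cylinders.
\item Meet $\mathcal R_s$ on each child separately. Choose $d\in D_s$ with $d\geq|\sigma^0_{\tau i}|$. Extend by zeros up to length $d$ and set the bit at $d$ equal to $P_s(\text{prefix})$. Then choose a larger $d'\in D_s$, extend to length $d'$, and set the bit at $d'$ to $1-P_s(\text{prefix})$. This is possible because $P_s$ is total and we choose the bit after seeing the prefix.
\item Append a long zero block so that the resulting $\sigma_{\tau i}$ satisfies $[\sigma_{\tau i}]\subseteq V_{s+1}$ and $|\sigma_{\tau i}|\geq2(s+1)$.
\end{enumerate}
Finite extensions preserve a requirement once it has been met, so every path through the tree meets all $\mathcal R_e$. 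The resulting set $\mathcal C$ is perfect because every node branches. It is covered by the Martin--L\"of test $(V_s)$, so no member is random.

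The only delicate point is that the enumeration of total computable predictors and of infinite computable sets is not effective. This is harmless because the statement explicitly disclaims effectivity of $\tau\mapsto C_\tau$. Only $(V_s)$ has to be effective, and it is fixed in advance, independently of the construction.
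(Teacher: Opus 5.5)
Your proof is correct and follows essentially the paper's argument. Both build a non-effective fusion tree in which each node meets one requirement by finite extension, is pushed inside the next component of a Martin--L\"of test with dense components, and splits into two long incompatible extensions. The differences are cosmetic: you use the canonical test $\{X:\exists n\,K(X\upharpoonright n)<n-s\}$, reached by zero-padding, and diagonalize directly against infinite computable sets. The paper instead builds a bespoke dense test and phrases the requirements as dense open sets $U_{i,e}^b$ indexed by c.e.\ sets.

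One small correction: drop the parenthetical calling a finite $C_P(X)$ or $E_P(X)$ ``vacuously fine'', since finite sets are not immune. It is never needed, because your requirements with cofinite $D$ already force both sets to be infinite.
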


\begin{proof}
Fix a uniformly effectively open sequence $(V_s)_{s\in\omega}$ such that
each $V_s$ is dense and
\begin{equation}
\lambda(V_s)\leq 2^{-s}.
\end{equation}
Such a sequence can be constructed uniformly as a union of basic clopen
cylinders. Enumerate all finite binary strings as
$\eta_0,\eta_1,\ldots$. For each $s$ and $j$, choose effectively an
extension $\widehat\eta_{s,j}\succeq\eta_j$ sufficiently long that
\begin{equation}
\lambda([\widehat\eta_{s,j}])
\leq 2^{-s-j-1},
\end{equation}
and put
\begin{equation}
V_s=\bigcup_{j\in\omega}[\widehat\eta_{s,j}].
\end{equation}
Every basic cylinder contains one of the cylinders
$[\widehat\eta_{s,j}]$, so $V_s$ is dense. Moreover,
\begin{equation}
\lambda(V_s)
\leq
\sum_{j\in\omega}2^{-s-j-1}
=
2^{-s}.
\end{equation}
Thus $(V_s)_{s\in\omega}$ is a Martin--L\"of test whose components are
dense.

Fix an enumeration $(P_i)_{i\in\omega}$ of the total computable predictors
and an effective enumeration $(W_e)_{e\in\omega}$ of the c.e. subsets of
$\omega$. The enumeration of the total computable predictors is used only
for this existence argument and need not itself be effective.

For every $i,e\in\omega$ and $b\in\{0,1\}$, consider the requirement
\begin{equation}
\mathcal R_{i,e}^b:\qquad
W_e\text{ infinite}
\Longrightarrow
(\exists n\in W_e)\ I_{P_i}^X(n)=b.
\end{equation}
If $W_e$ is infinite, define
\begin{equation}
U_{i,e}^b
=
\bigcup
\left\{
[\sigma c]:
|\sigma|=n\in W_e
\text{ and }
c=P_i(\sigma)\oplus b
\right\}.
\end{equation}
We claim that $U_{i,e}^b$ is open and dense. Openness is immediate. To
prove density, let $[\eta]$ be any basic cylinder. Since $W_e$ is infinite,
choose $n\in W_e$ with $n\geq|\eta|$, and extend $\eta$ to a string
$\sigma\in2^n$. Put
\begin{equation}
c=P_i(\sigma)\oplus b.
\end{equation}
Then $[\sigma c]\subseteq[\eta]\cap U_{i,e}^b$, and every
$X\in[\sigma c]$ satisfies
\begin{equation}
\begin{aligned}
I_{P_i}^X(n)
&=X(n)\oplus P_i(X\upharpoonright n)\\
&=c\oplus P_i(\sigma)\\
&=b.
\end{aligned}
\end{equation}
Thus $U_{i,e}^b$ is dense. If $W_e$ is finite, put
$U_{i,e}^b=2^\omega$. In either case, $U_{i,e}^b$ is a dense open set.

Choose a, not necessarily effective, enumeration $(U_s)_{s\in\omega}$ of
all the sets $U_{i,e}^b$. We now construct the family
$(C_\tau)_{\tau\in2^{<\omega}}$ by simultaneous fusion over all nodes at
each finite level.

Since $V_0$ is dense and open, the intersection $[\rho]\cap V_0$ is a
nonempty open set. Choose a nonempty basic cylinder
\begin{equation}
C_\varnothing\subseteq[\rho]\cap V_0.
\end{equation}

Suppose that, for some $s\in\omega$, pairwise disjoint nonempty basic
cylinders $C_\tau$ have been chosen for all $\tau\in2^s$. Fix one such
$\tau$. Since $U_s$ and $V_{s+1}$ are dense and open, the set
\begin{equation}
D_\tau=C_\tau\cap U_s\cap V_{s+1}
\end{equation}
is a nonempty open subset of Cantor space. We now justify carefully the
choice of the two children of $C_\tau$.

Because $D_\tau$ is nonempty and open, it contains a basic cylinder
$[\gamma_\tau]$. Choose an integer $N_\tau>|\gamma_\tau|$ sufficiently
large that
\begin{equation}
2^{-N_\tau}\leq 2^{-2(s+1)}.
\end{equation}
There are at least two distinct strings of length $N_\tau$ extending
$\gamma_\tau$. Choose two such incompatible extensions
$\gamma_{\tau,0}$ and $\gamma_{\tau,1}$ and define
\begin{equation}
C_{\tau0}=[\gamma_{\tau,0}],
\qquad
C_{\tau1}=[\gamma_{\tau,1}].
\end{equation}
Then $C_{\tau0}$ and $C_{\tau1}$ are nonempty disjoint basic cylinders and
\begin{equation}
C_{\tau j}
\subseteq
[\gamma_\tau]
\subseteq
D_\tau
\subseteq
C_\tau\cap U_s\cap V_{s+1}
\qquad (j=0,1).
\end{equation}
Furthermore,
\begin{equation}
\lambda(C_{\tau j})
=
2^{-N_\tau}
\leq
2^{-2(s+1)}
\qquad (j=0,1).
\end{equation}
Thus the finite requirements imposed at this stage do not obstruct
splitting. They merely restrict us to the nonempty open set $D_\tau$,
which still contains a basic cylinder and hence contains arbitrarily long
pairs of incompatible subcylinders. Increasing their lengths also gives
the required exponential measure bound.

Perform this choice for every one of the finitely many nodes
$\tau\in2^s$. This completes the construction of level $s+1$. Because
each child is contained in its parent, all requirements imposed at earlier
levels remain satisfied. Because each child is contained in $U_s$, the
requirement represented by $U_s$ is satisfied on every branch surviving
level $s+1$. Finally, every child lies in $V_{s+1}$ and satisfies the
prescribed measure estimate.

Define
\begin{equation}
F_s=\bigcup_{|\tau|=s}C_\tau
\end{equation}
and
\begin{equation}
\mathcal C=\bigcap_{s\in\omega}F_s.
\end{equation}
Each $F_s$ is a nonempty compact clopen set, and
$F_{s+1}\subseteq F_s$. Hence $\mathcal C$ is nonempty and closed. Along
each branch, the lengths of the defining cylinders increase strictly, so
every branch determines a unique real. Conversely, every real in
$\mathcal C$ determines a unique branch through the binary fusion tree.
Since every node has two incompatible children, no member of $\mathcal C$
is isolated. Therefore $\mathcal C$ is a perfect set, and the branch map
from $2^\omega$ onto $\mathcal C$ is a homeomorphism. Thus $\mathcal C$ is
a Cantor set.

For every $s>0$, there are exactly $2^s$ cylinders at level $s$, and each
has measure at most $2^{-2s}$. Therefore
\begin{equation}
\lambda(F_s)
\leq
2^s2^{-2s}
=
2^{-s}.
\end{equation}
It follows that
\begin{equation}
\lambda(\mathcal C)=0.
\end{equation}
More importantly, every level-$s$ cylinder is contained in $V_s$, and
hence
\begin{equation}
\mathcal C\subseteq\bigcap_{s\in\omega}V_s.
\end{equation}
Since $(V_s)_{s\in\omega}$ is a Martin--L\"of test, no member of
$\mathcal C$ is Martin--L\"of random. Consequently,
\begin{equation}
\mathcal C\cap\MLR=\varnothing.
\end{equation}

It remains to prove reservoir immunity. Fix $X\in\mathcal C$, a total
computable predictor $P_i$, and an infinite c.e. set $W_e$. For each
$b\in\{0,1\}$, the dense open set $U_{i,e}^b$ occurs as some $U_s$ in the
enumeration. All branches surviving level $s+1$, and therefore $X$, belong
to $U_{i,e}^b$. By the definition of this set, there is some $n\in W_e$
such that
\begin{equation}
I_{P_i}^X(n)=b.
\end{equation}
Thus both values $0$ and $1$ occur on $W_e$ in the innovation sequence
$I_{P_i}^X$. Hence no infinite c.e. set is contained in either the zero
set or the one set of $I_{P_i}^X$. Since this holds for every total
computable predictor $P_i$, every full innovation sequence $I_{P_i}^X$ is
bi-immune. Therefore $X$ is reservoir-immune.

We have shown that
\begin{equation}
\mathcal C\subseteq\mathrm{RI}\setminus\MLR,
\end{equation}
as required.
\end{proof}
\begin{theorem}[Predictable-reservoir theorem]
\label{thm:reservoir}
If $X$ has a predictable reservoir, then
\[
B\leq_{tt}X\quad\Longrightarrow\quad B\lesir X.
\]
Consequently,
\[
\Specinn(X)=\TTvis(X).
\]
\end{theorem}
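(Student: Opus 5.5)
Given a truth-table reduction $B(k)=F_k(X\upharpoonright u(k))$ with $F_k$ total and uniformly computable, and a predictable reservoir $C=\{c_0<c_1<\cdots\}$ with witness $Q$, the plan is to build a schedule inside the reservoir. At each scheduled site the source bit is exactly predicted, so the last-bit-balanced table can absorb an arbitrary truth-table computation: the rule computes the desired output from the past and then cancels the contribution of the fresh bit using the prediction.

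\emph{Step 1 (schedule).} Choose $j_k$ recursively, strictly increasing in $k$, with $c_{j_k}\geq u(k)$. For instance, take $j_k$ to be the least $j>j_{k-1}$ with $c_j\geq u(k)$. Since $C$ is infinite and computable and $u$ is computable, $k\mapsto s_k:=c_{j_k}$ is a computable strictly increasing function.

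\emph{Step 2 (predictor).} For $|\sigma|=s_k$, define
\[
P(\sigma)=Q(\sigma)\oplus F_k(\sigma\upharpoonright u(k)),
\]
which is legitimate because $u(k)\leq s_k$. At all other lengths set $P(\sigma)=0$. Then $P$ is total computable, since $Q$ and all $F_k$ are total and membership of a length in $\{s_k\}$ is decidable. On the actual source,
\[
J^X_{P,S}(k)=X(s_k)\oplus Q(X\upharpoonright s_k)\oplus F_k(X\upharpoonright u(k)).
\]
The reservoir property gives $X(s_k)=Q(X\upharpoonright s_k)$, so $J^X_{P,S}(k)=F_k(X\upharpoonright u(k))=B(k)$. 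Hence $B\lesir X$ via Theorem~\ref{thm:exact-trace-characterization}.

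\emph{Step 3 (spectrum).} By Proposition~\ref{prop:tt-bound}, $\Specinn(X)\subseteq\TTvis(X)$. The implication just proved gives the reverse inclusion: any $\deg_T(B)$ with $B\leq_{tt}X$ is realized by an SIR trace of $X$. So $\Specinn(X)=\TTvis(X)$.

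The only delicate point is the use bound. It must be arranged so that the scheduled site lies strictly beyond every oracle query. The required inequality is $u(k)\leq s_k$, where $u(k)$ bounds the length of the queried prefix, and Step 1 secures it. Uniformity also matters: the totality of the $F_k$, meaning the truth-table rather than the weak truth-table case, is exactly what makes $P$ total. Computability of $C$, rather than mere enumerability, is what makes the schedule computable. Everything else is direct verification.
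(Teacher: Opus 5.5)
Your proposal is correct and follows the paper's proof essentially step for step. The paper also takes a computable strictly increasing subsequence $s_k=c_{j(k)}\geq u(k)$ of the reservoir and sets $P(\sigma)=Q(\sigma)\oplus g(k,\sigma\upharpoonright u(k))$ at length $s_k$. It then combines the resulting inclusion with Proposition~\ref{prop:tt-bound} to get the spectrum equality.
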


\begin{proof}
Let $C=\{c_j\}$ be predicted correctly by $Q$, and suppose
\[
B(k)=g(k,X\upharpoonright u(k))
\]
for a total computable use $u$ and truth-table evaluator $g$. Choose a
computable strictly increasing sequence $j(k)$ such that
\[
s_k=c_{j(k)}\geq u(k).
\]
Define a total computable predictor $P$ at length $s_k$ by
\[
P(\sigma)=Q(\sigma)\oplus g(k,\sigma\upharpoonright u(k)),
\]
and define it arbitrarily at all other lengths. Then
\[
\begin{aligned}
J_{P,S}^X(k)
&=X(s_k)\oplus Q(X\upharpoonright s_k)
  \oplus g(k,X\upharpoonright u(k))\\
&=B(k).
\end{aligned}
\]
Thus $B\lesir X$. Combine this with
Proposition~\ref{prop:tt-bound}.
\end{proof}
\begin{remark}[Eventual reservoirs]
If $Q$ is correct on all but finitely many sites of $C$, the same construction
produces a finite variant of $B$. Therefore the equality of degree sets
$\Specinn(X)=\TTvis(X)$ still follows, even though exact equality of traces may
fail at finitely many coordinates.
\end{remark}

\begin{corollary}[Padded representatives]
\label{cor:padded}
For $X\in2^\omega$, define
\[
X^*(2n)=X(n),
\qquad
X^*(2n+1)=0.
\]
Then $X^*\equiv_{tt}X$, the odd positions form a predictable reservoir, and
\[
\Specinn(X^*)=\TTvis(X^*).
\]
\end{corollary}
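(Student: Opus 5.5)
The plan is to verify the three claims in the order they are stated, with the third reduced to Theorem~\ref{thm:reservoir} once the reservoir is exhibited.

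\emph{Truth-table equivalence.} Show $X^*\equiv_{tt}X$ by two explicit reductions with computable use and total truth tables.
\begin{itemize}
\item For $X^*\leq_{tt}X$: the value $X^*(m)$ is computed as $X(m/2)$ when $m$ is even and as the constant $0$ when $m$ is odd. This is a total truth table with use $\lfloor m/2\rfloor+1$.
\item For $X\leq_{tt}X^*$: use $X(n)=X^*(2n)$, a single query at a computable position.
\end{itemize}
Both are total computable evaluators, so $X^*\equiv_{tt}X$.

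\emph{Predictable reservoir.} Take $C=\{2n+1:n\in\omega\}$, which is infinite and computable, with $c_j=2j+1$. Let $Q\equiv 0$ be the constant predictor. Then $Q(X^*\upharpoonright c_j)=0=X^*(2j+1)$ for every $j$, so $C$ is a predictable reservoir for $X^*$.

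\emph{Spectrum equality.} Applying Theorem~\ref{thm:reservoir} to $X^*$ gives $B\leq_{tt}X^*\Rightarrow B\lesir X^*$. Combined with Proposition~\ref{prop:tt-bound}, this yields $\Specinn(X^*)=\TTvis(X^*)$.

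There is essentially no obstacle. The only point needing care is that the reservoir condition must hold exactly at every site, not just eventually, so that Theorem~\ref{thm:reservoir} applies directly rather than via the eventual-reservoir remark; the constant-zero padding guarantees this. As a side remark, since $X^*\equiv_{tt}X$, the set $\TTvis(X^*)$ equals $\TTvis(X)$.
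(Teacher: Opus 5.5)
Your proof is correct and is essentially the argument the paper leaves implicit, since it states this corollary directly after Theorem~\ref{thm:reservoir} without a separate proof. Padding and deletion are truth-table reductions, the constant-zero predictor makes the odd positions a predictable reservoir, and Theorem~\ref{thm:reservoir} together with Proposition~\ref{prop:tt-bound} then gives $\Specinn(X^*)=\TTvis(X^*)$.
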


\begin{remark}[The canonical presentation bridge]
If $X\in\mathrm{RI}$, then $X^*$ lies in the upper region,
$X\equiv_{tt}X^*$, and $X\lesir X^*$ by projection onto the even
coordinates. The reverse reduction is impossible, since it would place
$X$ above the computable SIR degree. Hence
\[
\deg_{\mathrm{sir}}(X)<\deg_{\mathrm{sir}}(X^*).
\]
Thus $X\mapsto X^*$ is the canonical bridge from the RI region to the
unique greatest SIR degree in the same truth-table degree.
\end{remark}

\begin{theorem}[The truth-table spine of the SIR degrees]
\label{thm:tt-spine}
Let
\[
\mathcal U_{\mathrm{sir}}
=\{\deg_{\mathrm{sir}}(X):\zeroSeq\lesir X\}.
\]
Then
\[
\pi:\mathcal U_{\mathrm{sir}}\longrightarrow\mathcal D_{tt},
\qquad
\pi(\deg_{\mathrm{sir}}(X))=\deg_{tt}(X),
\]
is an order isomorphism onto the truth-table degrees. Consequently:
\begin{enumerate}[label=(\roman*)]
\item $\mathcal U_{\mathrm{sir}}$ is an upper semilattice;
\item every truth-table degree contains a unique greatest SIR degree,
represented by any member of that truth-table degree having a predictable
reservoir;
\item if at least one of $X,Y$ has a predictable reservoir, then their join
exists in the full SIR degree structure and
\[
\deg_{\mathrm{sir}}(X)\vee\deg_{\mathrm{sir}}(Y)
=\deg_{\mathrm{sir}}(X\oplus Y).
\]
\end{enumerate}
\end{theorem}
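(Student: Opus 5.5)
The plan is to reduce everything to the Predictable-reservoir theorem (Theorem~\ref{thm:reservoir}) together with upward closure of having a reservoir. By Proposition~\ref{prop:computable-trace-reservoir}, $\zeroSeq\lesir X$ holds iff $X$ has a predictable reservoir. So $\mathcal U_{\mathrm{sir}}$ is exactly the set of SIR degrees of presentations with a reservoir, and membership is a degree invariant by transitivity (Proposition~\ref{prop:sir-preorder}).

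\textbf{Order isomorphism.} Take $X,Y$ with reservoirs.
\begin{enumerate}[label=(\alph*)]
\item If $X\lesir Y$ then $X\leq_{tt}Y$ by Proposition~\ref{prop:tt-bound}. Conversely, if $X\leq_{tt}Y$ then $X\lesir Y$ by Theorem~\ref{thm:reservoir} applied to $Y$. Hence on $\mathcal U_{\mathrm{sir}}$ the relations $\lesir$ and $\leq_{tt}$ coincide. This shows that $\pi$ is well defined, order preserving, order reflecting, and therefore injective.
\item For surjectivity, given any $X$, Corollary~\ref{cor:padded} gives $X^*\equiv_{tt}X$ with a reservoir, so $\deg_{tt}(X)=\pi(\deg_{\mathrm{sir}}(X^*))$.
\end{enumerate}

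\textbf{Clause (i).} $\mathcal D_{tt}$ is an upper semilattice, with join $\oplus$. Since $\pi$ is an order isomorphism, joins transfer to $\mathcal U_{\mathrm{sir}}$.

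\textbf{Clause (ii).} Let $Z$ have a reservoir and $X\equiv_{tt}Z$. Theorem~\ref{thm:reservoir} gives $X\lesir Z$, so $\deg_{\mathrm{sir}}(Z)$ is greatest in the tt-degree. It is unique because any two such greatest degrees are $\lesir$ each other. Existence of such a $Z$ in every tt-degree follows from padding.

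\textbf{Clause (iii).} Assume $X$ has reservoir $C$ via $Q$. First show $X\oplus Y$ has a reservoir: use the positions $\{2c:c\in C\}$, with predictor $\sigma\mapsto Q(\text{even part of }\sigma)$. This works because the even part of $(X\oplus Y)\upharpoonright 2c$ is exactly $X\upharpoonright c$.

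Next, $X\lesir X\oplus Y$ and $Y\lesir X\oplus Y$ hold by projection onto the even and odd schedules with the zero predictor, so $X\oplus Y$ is an upper bound.

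For leastness, suppose $X\lesir Z$ and $Y\lesir Z$. Since $X$ has a reservoir and $X\lesir Z$, the set $\mathcal U_{\mathrm{sir}}$ is upward closed (by transitivity with $\zeroSeq\lesir X$), so $Z$ has a reservoir. Then $X\oplus Y\leq_{tt}Z$, because tt-reductions combine, and Theorem~\ref{thm:reservoir} upgrades this to $X\oplus Y\lesir Z$.

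\textbf{Main obstacle.} The main point is clause (iii): the join is taken in the full SIR structure, not just in $\mathcal U_{\mathrm{sir}}$. This requires two facts. Reservoirs propagate to $X\oplus Y$, which needs the causal check that the even-indexed prefix is available at time $2c$. Arbitrary upper bounds $Z$ are forced into the spine, which follows from upward closure.
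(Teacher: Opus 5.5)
Your proposal is correct and follows essentially the same route as the paper: $\lesir$ and $\leq_{tt}$ coincide on reservoir-bearing targets by Proposition~\ref{prop:tt-bound} and Theorem~\ref{thm:reservoir}, surjectivity comes from padding, and $X\oplus Y$ is least because any common upper bound $Z$ is forced into the spine by upward closure. Your explicit reservoir $\{2c:c\in C\}$ for $X\oplus Y$ is correct but redundant, since $\zeroSeq\lesir X\lesir X\oplus Y$ already gives it by transitivity.
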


\begin{proof}
The map $\pi$ is well defined because SIR equivalence implies truth-table
equivalence. If $X,Y$ lie in $\mathcal U_{\mathrm{sir}}$, then the target
$Y$ has a predictable reservoir, and therefore
\[
X\lesir Y\quad\Longleftrightarrow\quad X\leq_{tt}Y
\]
by Proposition~\ref{prop:tt-bound} and
Theorem~\ref{thm:reservoir}. Hence $\pi$ is an order embedding. It is
surjective because Corollary~\ref{cor:padded} supplies, in every truth-table
degree, a representative with a predictable reservoir.

Fix a truth-table degree $\mathbf d$ and choose a reservoir-bearing
representative $Y\in\mathbf d$. Every other $X\in\mathbf d$ satisfies
$X\leq_{tt}Y$, hence $X\lesir Y$. Thus the spine degree represented by $Y$
is the unique greatest SIR degree inside $\mathbf d$.

Finally, $X\oplus Y$ is always an SIR upper bound of $X$ and $Y$, witnessed
by the even and odd coordinate projections. Let $Z$ be any common SIR upper
bound, and suppose without loss of generality that $X$ has a predictable
reservoir. Then
\[
\zeroSeq\lesir X\lesir Z,
\]
so $Z$ also has a predictable reservoir. Since $X,Y\leq_{tt}Z$,
truth-table closure under effective disjoint union gives
$X\oplus Y\leq_{tt}Z$. The predictable-reservoir theorem now yields
$X\oplus Y\lesir Z$. Hence $X\oplus Y$ is the least common SIR upper bound.
\end{proof}

\begin{corollary}[C.e. degrees on the truth-table spine]
\label{cor:ce-principal-ideal}
Let $A$ be computably enumerable. Then $A$ has a predictable reservoir and
\[
\{B:B\lesir A\}=\{B:B\leq_{tt}A\}.
\]
Thus the computable SIR degree is the least SIR degree represented by a c.e.
real. Moreover:
\begin{enumerate}[label=(\roman*)]
\item if $A$ and $B$ are c.e. and $A\equiv_{tt}B$, then $A\equivsir B$;
\item the c.e.-represented SIR degrees are closed under joins, with
\[
\deg_{\mathrm{sir}}(A)\vee\deg_{\mathrm{sir}}(B)
=\deg_{\mathrm{sir}}(A\oplus B).
\]
\end{enumerate}
After passing to truth-table degrees, the degrees represented below $A$ form
the principal truth-table ideal below $\deg_{tt}(A)$.
\end{corollary}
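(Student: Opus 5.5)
The central step is to show that every c.e. set $A$ has a predictable reservoir. After that, the corollary follows from Theorem~\ref{thm:reservoir} and Theorem~\ref{thm:tt-spine}.

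\emph{Finite or co-infinite-free case.} If $A$ is finite or cofinite, then $A$ is computable. In that case the full schedule is a predictable reservoir, via $Q(\sigma)=A(|\sigma|)$, as in the proof of Proposition~\ref{prop:computable-trace-reservoir}.

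\emph{Infinite case.} Otherwise $A$ is an infinite c.e. set. As recalled before the definition of reservoir immunity, it contains an infinite computable subset $C=\{c_0<c_1<\cdots\}$. Take the constant predictor $Q\equiv1$. For every $j$ we have $Q(A\upharpoonright c_j)=1=A(c_j)$, so $C$ is a predictable reservoir for $A$. Equivalently, $C_Q(A)\supseteq C$ is not immune, and by Proposition~\ref{prop:computable-trace-reservoir} the set $A$ is not reservoir-immune. This argument covers every c.e. set except the finite ones, which are already handled.

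\emph{The lower cone.} Now $\{B:B\lesir A\}=\{B:B\leq_{tt}A\}$. The inclusion $\subseteq$ is Proposition~\ref{prop:tt-bound}, and $\supseteq$ is Theorem~\ref{thm:reservoir}.

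\emph{Least c.e.-represented degree.} Every c.e. $A$ satisfies $\zeroSeq\lesir A$, since $\zeroSeq\leq_{tt}A$ and we may apply the displayed equality. The computable set $\emptyset$ is itself c.e., and it represents $\mathbf0_{\mathrm{sir}}$. So the computable SIR degree is the least SIR degree represented by a c.e. real.

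\emph{Item (i).} If $A\equiv_{tt}B$ with both c.e., both sets have reservoirs. Applying the equality to each gives $A\lesir B$ and $B\lesir A$, so $A\equivsir B$. Alternatively, this is Theorem~\ref{thm:tt-spine}(ii), since both sets represent the greatest SIR degree of their common truth-table degree.

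\emph{Item (ii).} This is Theorem~\ref{thm:tt-spine}(iii), since $A$ has a reservoir. I also need $A\oplus B$ to be c.e. when $A,B$ are, which is standard. So the join $\deg_{\mathrm{sir}}(A\oplus B)$ is again c.e.-represented.

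\emph{Final sentence.} The truth-table degrees of the $B\lesir A$ are exactly the truth-table degrees of the $B\leq_{tt}A$. That collection is the principal ideal below $\deg_{tt}(A)$.

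\emph{Main obstacle.} I expect the only delicate point to be the reservoir existence. The proof must correctly use that an infinite c.e. set has an infinite computable subset. Concretely, enumerate $A$ and keep an element only when it exceeds all previously kept ones; this yields an increasing computable sequence and hence a decidable set. I also need to make sure the finite case is not omitted. Everything else is a direct citation of earlier results.
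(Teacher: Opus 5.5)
Your proof is correct and follows the paper's route. A finite (hence computable) $A$ uses the full schedule, an infinite c.e.\ $A$ uses the constant-one predictor on an infinite computable subset, and the remaining claims come from Proposition~\ref{prop:tt-bound}, Theorem~\ref{thm:reservoir}, and Theorem~\ref{thm:tt-spine} together with the fact that $A\oplus B$ is c.e.; your separate handling of cofinite sets is harmless but redundant, since they are infinite and already covered by the constant-one argument.
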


\begin{proof}
If $A$ is finite, then it is computable and the full schedule is a predictable
reservoir. If $A$ is infinite, it contains an infinite computable subset
$S$. On $S$ the constant-one predictor is correct, so $S$ is a predictable
reservoir. The equality of the two classes of presentations follows from
Theorem~\ref{thm:reservoir} and Proposition~\ref{prop:tt-bound}. After
passing to truth-table degrees, this equality yields the principal
truth-table ideal below $\deg_{tt}(A)$. The remaining assertions follow from
Theorem~\ref{thm:tt-spine}; note that $A\oplus B$ is again c.e.
\end{proof}

\begin{corollary}[The exhaustive spine--RI partition]
\label{cor:two-canonical-regions}
Let
\[
\begin{aligned}
\mathcal U_{\mathrm{sir}}
  &=\{\mathbf a:\mathbf0_{\mathrm{sir}}\leq_{\mathrm{sir}}\mathbf a\},\\
\mathcal I_{\mathrm{RI}}
  &=\{\deg_{\mathrm{sir}}(X):X\in\mathrm{RI}\},\\
\mathcal R_{\mathrm{sir}}
  &=\{\deg_{\mathrm{sir}}(R):R\in\MLR\}.
\end{aligned}
\]
Then
\[
\mathcal D_{\mathrm{sir}}
=\mathcal U_{\mathrm{sir}}\mathbin{\dot\cup}\mathcal I_{\mathrm{RI}}.
\]
The upper region $\mathcal U_{\mathrm{sir}}$ is an upward-closed upper
semilattice isomorphic to the truth-table degrees, whereas
$\mathcal I_{\mathrm{RI}}$ is downward closed. Moreover,
\[
\mathcal R_{\mathrm{sir}}\subsetneq\mathcal I_{\mathrm{RI}},
\]
and $\mathcal R_{\mathrm{sir}}$ is itself downward closed. The strictness is
local and perfect: every basic cylinder contains a perfect Cantor set in
$\mathrm{RI}\setminus\MLR$.
\end{corollary}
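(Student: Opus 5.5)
The plan is to assemble the corollary from Proposition~\ref{prop:computable-trace-reservoir}, Theorem~\ref{thm:tt-spine}, Corollary~\ref{cor:random-sir-lower-region}, Proposition~\ref{prop:random-reservoir-immune} and Theorem~\ref{thm:null-cantor-ri}. Before anything else, I will check that both membership in $\mathcal U_{\mathrm{sir}}$ and reservoir immunity are properties of SIR degrees, not of individual presentations. If $X\equivsir X'$ and $X$ has a predictable reservoir, then $\zeroSeq\lesir X\lesir X'$ by transitivity (Proposition~\ref{prop:sir-preorder}). Proposition~\ref{prop:computable-trace-reservoir} then shows $X'$ is not reservoir-immune, by the equivalence $(2)\Leftrightarrow(5)$. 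Hence a degree cannot contain both an RI representative and a non-RI one.

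\textbf{The partition.} Take any degree $\mathbf a=\deg_{\mathrm{sir}}(X)$. By $(2)\Leftrightarrow(5)$ of Proposition~\ref{prop:computable-trace-reservoir}, exactly one of two things happens. Either $\zeroSeq\lesir X$, so $\mathbf0_{\mathrm{sir}}\leq\mathbf a$ and $\mathbf a\in\mathcal U_{\mathrm{sir}}$. Or $X\in\mathrm{RI}$, so $\mathbf a\in\mathcal I_{\mathrm{RI}}$. The invariance check above makes the union disjoint: a degree in $\mathcal I_{\mathrm{RI}}$ has an RI representative, so all its representatives are RI, and then none lies above $\zeroSeq$.

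\textbf{Closure properties.} Upward closure of $\mathcal U_{\mathrm{sir}}$ is just transitivity. For downward closure of $\mathcal I_{\mathrm{RI}}$, suppose $Y\lesir X$ with $X\in\mathrm{RI}$. If $Y$ were not RI, we would get $\zeroSeq\lesir Y\lesir X$, which contradicts $X\in\mathrm{RI}$. The isomorphism with $\mathcal D_{tt}$ and the semilattice structure are Theorem~\ref{thm:tt-spine}. One point needs care here: Theorem~\ref{thm:tt-spine} defines $\mathcal U_{\mathrm{sir}}$ via $\zeroSeq\lesir X$, and this matches the present definition because $\mathbf 0_{\mathrm{sir}}=\deg_{\mathrm{sir}}(\zeroSeq)$. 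Downward closure of $\mathcal R_{\mathrm{sir}}$ is Corollary~\ref{cor:random-sir-lower-region}. The inclusion $\mathcal R_{\mathrm{sir}}\subseteq\mathcal I_{\mathrm{RI}}$ is Proposition~\ref{prop:random-reservoir-immune}.

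\textbf{Strictness.} This is the only step that needs more than bookkeeping. Theorem~\ref{thm:null-cantor-ri} gives some $X\in\mathrm{RI}\setminus\MLR$, and I expect the main obstacle to be showing that its degree $\deg_{\mathrm{sir}}(X)$ contains no random real. Suppose $R\equivsir X$ with $R\in\MLR$. Then $X\lesir R$, and Corollary~\ref{cor:random-sir-lower-region} forces $X\in\MLR$, a contradiction. So $\deg_{\mathrm{sir}}(X)\in\mathcal I_{\mathrm{RI}}\setminus\mathcal R_{\mathrm{sir}}$. The final sentence of the corollary, that every basic cylinder contains a perfect Cantor set in $\mathrm{RI}\setminus\MLR$, is a direct restatement of Theorem~\ref{thm:null-cantor-ri}.
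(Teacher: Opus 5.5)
Your proposal is correct and follows essentially the same route as the paper. The partition comes from $(2)\Leftrightarrow(5)$ of Proposition~\ref{prop:computable-trace-reservoir}, the spine structure from Theorem~\ref{thm:tt-spine}, the downward closures from transitivity and Corollary~\ref{cor:random-sir-lower-region}, and strictness from Theorem~\ref{thm:null-cantor-ri}; your explicit checks that reservoir immunity is an SIR-degree invariant and that the SIR degree of a nonrandom RI real contains no random representative fill in steps the paper leaves implicit.
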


\begin{proof}
By Proposition~\ref{prop:computable-trace-reservoir}, a real lies above the
computable SIR degree exactly when it is not reservoir-immune. This proves
the exhaustive disjoint partition. Upward closure and the semilattice
structure of $\mathcal U_{\mathrm{sir}}$ follow from
Theorem~\ref{thm:tt-spine}. If $X\in\mathrm{RI}$ and $Y\lesir X$, then
$Y\notin\mathcal U_{\mathrm{sir}}$, for otherwise
$\zeroSeq\lesir Y\lesir X$; hence $Y\in\mathrm{RI}$. Thus the RI region is
downward closed. Downward closure of $\mathcal R_{\mathrm{sir}}$ is
Corollary~\ref{cor:random-sir-lower-region}, and
Theorem~\ref{thm:null-cantor-ri} proves that the inclusion into the RI region
is proper in every basic cylinder.
\end{proof}

\begin{theorem}[Vanishing-density predictable reservoirs]
\label{thm:zero-density-reservoir}
Let $C\subseteq\omega$ be an infinite computable set of asymptotic density
zero, and enumerate its complement as $D=\{d_0<d_1<\cdots\}$.  For
$X\in2^\omega$, define the $C$-padded presentation $Y$ by
\[
        Y(n)=0\quad(n\in C),
        \qquad
        Y(d_k)=X(k).
\]
Then
\[
        Y\equiv_{tt}X,
        \qquad
        \Specinn(Y)=\TTvis(Y).
\]
Moreover, the lower and upper effective dimensions are preserved:
\[
        \dim_{\mathrm{eff}}(Y)=\dim_{\mathrm{eff}}(X),
        \qquad
        \operatorname{Dim}_{\mathrm{eff}}(Y)
        =\operatorname{Dim}_{\mathrm{eff}}(X).
\]
\end{theorem}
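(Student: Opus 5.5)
The proof has three parts, taken in order: the truth-table equivalence, the spectrum identity, and the dimension estimates. \emph{Truth-table equivalence.} Since $C$ is computable and infinite with infinite complement (density zero forces $D$ to be infinite), the enumeration $k\mapsto d_k$ is computable and strictly increasing, and membership $n\in C$ is decidable. Hence $X(k)=Y(d_k)$ is a truth-table reduction with use $d_k+1$. Conversely, $Y(n)$ is computed by deciding whether $n\in C$: if so we output $0$, and otherwise we find the rank $k$ of $n$ in $D$ (with $k\le n$) and output $X(k)$. This gives $Y\leq_{tt}X$.

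\emph{Spectrum identity.} The set $C$ is a predictable reservoir for $Y$, witnessed by the constant predictor $Q\equiv 0$, because $Y(c_j)=0=Q(Y\upharpoonright c_j)$ for every $j$. Theorem~\ref{thm:reservoir} then gives $\Specinn(Y)=\TTvis(Y)$ directly, exactly as in Corollary~\ref{cor:padded}.

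\emph{Dimensions.} We compare prefixes. Let $c(N)=|C\cap N|$ and $m(N)=N-c(N)=|D\cap N|$. Density zero means $c(N)/N\to0$, so $m(N)/N\to1$. The prefix $Y\upharpoonright N$ is computably determined by $N$ and $X\upharpoonright m(N)$, and conversely $X\upharpoonright m(N)$ is computed from $Y\upharpoonright N$. This gives
\[
K(Y\upharpoonright N)=K(X\upharpoonright m(N))+O(\log N).
\]
The upper bound uses the extra $K(N)=O(\log N)$ bits. The lower bound holds because $m(N)$ is computable from $N=|Y\upharpoonright N|$, so $K(X\upharpoonright m(N))\le K(Y\upharpoonright N)+O(1)$. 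Dividing by $N$ and using $m(N)/N\to1$, the $\liminf$ and $\limsup$ of $K(Y\upharpoonright N)/N$ along all $N$ equal those of $K(X\upharpoonright m)/m$ along the values $m=m(N)$.

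The main obstacle is that $m(N)$ ranges over every $m$, yet the dimension of $X$ is a $\liminf$ or $\limsup$ over all $m$, and we need these to match the values along $m(N)$. This holds because $m(\cdot)$ is nondecreasing and increases by at most $1$ per step, so it hits every natural number. For each $m$, choose $N_m=d_{m-1}+1$, the least $N$ with $m(N)=m$; then $m/N_m\to1$. Consequently
\[
\liminf_N \frac{K(Y\upharpoonright N)}{N}\le\liminf_m\frac{K(X\upharpoonright m)+O(\log N_m)}{N_m}=\edim(X),
\]
and the reverse inequality follows from the general comparison, with the same reasoning for $\operatorname{Dim}_{\mathrm{eff}}$. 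The only care needed is that $N_m/m\to1$, which is exactly the density-zero hypothesis, and that the $O(\log N)$ error vanishes after normalization.
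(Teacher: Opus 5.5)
Your proposal is correct and follows essentially the same route as the paper: the padded set $C$ is a predictable reservoir, so the predictable-reservoir theorem gives $\Specinn(Y)=\TTvis(Y)$; deleting and inserting the coordinates of $C$ gives $Y\equiv_{tt}X$; and the prefix comparison $K(Y\upharpoonright N)=K(X\upharpoonright m(N))+O(\log N)$, together with $m(N)/N\to1$ and the fact that $m$ takes every natural value, yields both dimension equalities. Your explicit subsequence $N_m=d_{m-1}+1$ and your sharper $O(1)$ lower bound only spell out what the paper compresses into the phrase ``$m$ assumes every natural value.''
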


\begin{proof}
The computable zero set $C$ is a predictable reservoir, so
Theorem~\ref{thm:reservoir} gives
$\Specinn(Y)=\TTvis(Y)$.  Deleting the coordinates in $C$ recovers $X$, while
inserting zeroes on $C$ computes $Y$ from $X$, hence $Y\equiv_{tt}X$.

Let
\[
        m(n)=|D\cap n|=n-|C\cap n|.
\]
Then $m(n)=n-o(n)$.  Uniformly in $n$, the prefix $Y\upharpoonright n$ is
computable from $X\upharpoonright m(n)$ and $n$, while
$X\upharpoonright m(n)$ is computable from $Y\upharpoonright n$.  Therefore
\[
 K(X\upharpoonright m(n))-O(\log n)
 \leq K(Y\upharpoonright n)
 \leq K(X\upharpoonright m(n))+O(\log n).
\]
Since $m(n)/n\to1$ and $m$ assumes every natural value, taking lower and upper
limits proves both dimension equalities.
\end{proof}

\begin{remark}[Increasing-block implementation]
One may partition $\omega$ into computable blocks of increasing lengths and
reserve only the last coordinate of each block for the value $0$.  The
reserved coordinates then form an infinite density-zero reservoir.  Thus the
permanent even--odd empty column in Corollary~\ref{cor:padded} is a convenient
normal form, not a density requirement.  Arbitrarily sparse predictable sites
suffice because they may be placed after the use of each desired truth-table
computation.
\end{remark}

\begin{corollary}[The spectrum is not a truth-table-degree invariant]
\label{cor:not-degree-invariant}
There are $X\equiv_{tt}Y$ with
\[
        \Specinn(X)\neq\Specinn(Y).
\]
\end{corollary}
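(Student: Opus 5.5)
Take $X=R$ Martin--L\"of random and $Y=R^*$ its padded presentation from Corollary~\ref{cor:padded}. By that corollary, $R\equiv_{tt}R^*$, so the two reals lie in the same truth-table degree. It remains to show that their innovation spectra differ, and the computable degree $\mathbf 0$ will be the separating element.

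First, $\mathbf 0\in\Specinn(R^*)$. The odd coordinates of $R^*$ are identically $0$, so they form a predictable reservoir: the constant-zero predictor is correct at every odd site. Proposition~\ref{prop:computable-trace-reservoir} then gives $\zeroSeq\lesir R^*$, and hence $\mathbf 0\in\Specinn(R^*)$. Alternatively, $\mathbf 0\in\TTvis(R^*)$ trivially, and Corollary~\ref{cor:padded} gives $\Specinn(R^*)=\TTvis(R^*)$.

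Second, $\mathbf 0\notin\Specinn(R)$. By Corollary~\ref{cor:random-sir-lower-region}, every $Z\lesir R$ is Martin--L\"of random, hence noncomputable, so no trace of $R$ has degree $\mathbf 0$. One can equally argue from Proposition~\ref{prop:random-reservoir-immune}: $R$ is reservoir-immune, so by Proposition~\ref{prop:computable-trace-reservoir}, $\mathbf 0\notin\Specinn(R)$.

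Together these give $\Specinn(R)\neq\Specinn(R^*)$ with $R\equiv_{tt}R^*$. Every step is a direct citation of earlier results. The one point to verify is $R^*\equiv_{tt}R$: for $R^*\leq_{tt}R$, each bit of $R^*$ is either a copy of a bit of $R$ or the constant $0$; for $R\leq_{tt}R^*$, each bit of $R$ is read off an even coordinate of $R^*$. The same argument works with any reservoir-immune $X$ in place of $R$, using Proposition~\ref{prop:computable-trace-reservoir} directly.
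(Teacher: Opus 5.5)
Your proof is correct and follows essentially the same route as the paper: you compare a random $R$ with its padded presentation $R^*$, and $\mathbf 0$ separates the spectra, since every SIR trace of $R$ is random while the odd zero column of $R^*$ is a computable trace. Your closing remark that any reservoir-immune $X$ can replace $R$ is also valid, by Proposition~\ref{prop:computable-trace-reservoir} together with Corollary~\ref{cor:padded}.
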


\begin{proof}
Let $R$ be $\MartinLof$ random and let $R^*$ be its padded representative.  Every
computable trace of $R$ is $\MartinLof$ random by
Theorem~\ref{thm:residual-ls}, so the computable degree is absent from
$\Specinn(R)$.  The odd zero column of $R^*$ is a computable trace, so
$\mathbf 0\in\Specinn(R^*)$.  Yet $R\equiv_{tt}R^*$.
\end{proof}

\begin{theorem}[Two independent collapse criteria]
\label{thm:three-visibility-collapse}
If $X$ has a predictable reservoir and $\emptyset'\leq_{tt}X$, then
\[
 \Specinn(X)=\TTvis(X)=\WTTvis(X).
\]
\end{theorem}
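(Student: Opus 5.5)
The statement combines two earlier results. I will prove the two equalities separately and then chain them.

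For the first equality, $X$ has a predictable reservoir, so Theorem~\ref{thm:reservoir} applies directly and gives $\Specinn(X)=\TTvis(X)$. Concretely, every truth-table reduction $B\leq_{tt}X$ is realized as a sequential trace $B=J_{P,S}^X$ on a subschedule of the reservoir placed beyond the use, and Proposition~\ref{prop:tt-bound} supplies the reverse inclusion.

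For the second equality, the inclusion $\TTvis(X)\subseteq\WTTvis(X)$ is the standard implication from truth-table to weak truth-table reducibility. For the reverse inclusion, I take $B\leq_{wtt}X$ and apply Remark~\ref{rem:wtt-totalization} to get $B\leq_{tt}X\oplus\emptyset'$. From $\emptyset'\leq_{tt}X$, truth-table closure under effective join gives $X\oplus\emptyset'\leq_{tt}X$. Transitivity of $\leq_{tt}$ then yields $B\leq_{tt}X$. So every real weak-truth-table below $X$ is already truth-table below $X$, which is stronger than equality of degree sets. This is exactly the consequence already recorded at the end of Remark~\ref{rem:wtt-totalization}.

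The only step needing some care is the totalization $B\leq_{wtt}X\Rightarrow B\leq_{tt}X\oplus\emptyset'$, which I take from the remark. To make it self-contained, write the reduction as a use bound $u(k)$ with partial maps $F_k$ on $2^{u(k)}$, and define the total table
\[
G_k(\sigma)=F_k(\sigma)\quad\text{if }F_k(\sigma)\text{ halts},\qquad G_k(\sigma)=0\quad\text{otherwise}.
\]
Halting of $F_k(\sigma)$ is a single $\emptyset'$ query at a computable index, so $G_k$ is a total truth table over $X\upharpoonright u(k)$ together with one bit of $\emptyset'$. On the true prefix, $G_k$ agrees with $B(k)$ because $F_k(X\upharpoonright u(k))$ converges there.

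The two hypotheses play independent roles: the reservoir collapses SIR onto $tt$, and $\emptyset'$ collapses $wtt$ onto $tt$. This is the sense of the title, and it requires no interaction argument.
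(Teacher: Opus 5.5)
Your proposal is correct and follows the paper's proof exactly: Theorem~\ref{thm:reservoir} together with Proposition~\ref{prop:tt-bound} gives the first equality, and Remark~\ref{rem:wtt-totalization} combined with $X\oplus\emptyset'\leq_{tt}X$ gives the second. One wording fix in your self-contained totalization: a truth-table reduction must fix its queries independently of $X$, so it has to query the halting bits for all $2^{u(k)}$ strings $\sigma$, not ``one bit of $\emptyset'$''; this is still finitely many queries, so the argument is unaffected.
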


\begin{proof}
The predictable-reservoir theorem gives
$\Specinn(X)=\TTvis(X)$, while
Remark~\ref{rem:wtt-totalization} gives
$\TTvis(X)=\WTTvis(X)$.
\end{proof}

\begin{remark}[Qualitative versus quantitative collapse]
Theorem~\ref{thm:three-visibility-collapse} concerns represented Turing
degrees.  It does not preserve use functions or schedule density.  Totalizing
a weak truth-table computation may inspect all counterfactual strings of a
given bounded length, and realizing the resulting truth table as a trace may
delay the output to a much later reservoir coordinate.  Thus entropy and rate
questions remain strictly finer than equality of visible degree sets.
\end{remark}

The preceding results describe the upper part of the SIR degree structure.  We now turn to its lower boundary, where reservoir immunity prevents the computable reals from playing the role of a least degree.

\begin{proposition}[The computable degree is minimal but not least]
\label{prop:sir-minimal-not-least}
All computable reals belong to one sequential-innovation degree, and this degree is
minimal.  It is not a least degree.  More precisely,
\[
 \zeroSeq\lesir X
 \quad\Longleftrightarrow\quad
 X\text{ is not reservoir-immune}.
\]
For every $X$, the visible set $\Specinn(X)$ has the top element $\deg_T(X)$,
but it may omit $\mathbf0$ and need not be a Turing lower ideal.
\end{proposition}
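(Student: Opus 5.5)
We split the statement into its parts and settle each one using results already proved.

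\emph{All computable reals form one SIR degree.} Let $A$ and $B$ be computable. The full schedule with predictor $P(\sigma)=A(|\sigma|)\oplus B(|\sigma|)$ is total computable, and its trace is
\[
B(n)\oplus A(n)\oplus A(n)=B(n),
\]
so $B\lesir A$. By symmetry $A\equivsir B$. We write $\mathbf 0_{\mathrm{sir}}=\deg_{\mathrm{sir}}(\zeroSeq)$.

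\emph{Minimality.} Suppose $Y\lesir\zeroSeq$. By Proposition~\ref{prop:tt-bound}, $Y\leq_{tt}\zeroSeq$, so $Y$ is computable and $\deg_{\mathrm{sir}}(Y)=\mathbf0_{\mathrm{sir}}$. Thus no degree lies strictly below $\mathbf0_{\mathrm{sir}}$. This is the sense of minimality we intend: $\mathbf0_{\mathrm{sir}}$ is minimal in the whole order, not merely among nonzero degrees.

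\emph{The displayed equivalence.} This is exactly (2)$\Leftrightarrow$(5) of Proposition~\ref{prop:computable-trace-reservoir}, so we cite it directly.

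\emph{Not least.} It suffices to exhibit a reservoir-immune real. Any $\MartinLof$ random $R$ works: Proposition~\ref{prop:random-reservoir-immune} shows $R$ is reservoir-immune, and Corollary~\ref{cor:sir-proper} gives $\zeroSeq\nlesir R$ directly. Hence $\mathbf0_{\mathrm{sir}}\nleq\deg_{\mathrm{sir}}(R)$, and $\mathbf0_{\mathrm{sir}}$ is not least.

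\emph{The spectrum.} The identity channel gives $\deg_T(X)\in\Specinn(X)$. Every $Y\lesir X$ satisfies $Y\leq_TX$ by Theorem~\ref{thm:sir-hierarchy}, so $\deg_T(X)$ is the top element. For the random $R$ above, $\mathbf 0\notin\Specinn(R)$ by Proposition~\ref{prop:computable-trace-reservoir}, since $R$ is reservoir-immune. But $\mathbf0\leq_T\deg_T(R)$, so $\Specinn(R)$ omits a Turing degree below its top element and is therefore not a Turing lower ideal.

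Nothing here is hard; the only step needing care is the "not least" part, which relies on the existence of a reservoir-immune real. That existence has already been supplied by Proposition~\ref{prop:random-reservoir-immune}, or alternatively by Theorem~\ref{thm:null-cantor-ri}.
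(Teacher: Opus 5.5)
Your proof is correct and follows essentially the same route as the paper. You hard-code $A(|\sigma|)\oplus B(|\sigma|)$ to put all computable reals in one degree, use that traces of computable sources are computable for minimality, cite Proposition~\ref{prop:computable-trace-reservoir} for the equivalence, and use a Martin--L\"of random $R$ to show the degree is not least and that $\Specinn(R)$ omits $\mathbf0$. You are only slightly more explicit than the paper in justifying that $\deg_T(X)$ is the top element via $Y\lesir X\Rightarrow Y\leq_T X$.
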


\begin{proof}
Computable reals predict one another by hard-coding the desired output, and
every trace of a computable source is computable.  Hence their common degree is
minimal.  Proposition~\ref{prop:computable-trace-reservoir} gives the displayed
equivalence and shows that no reservoir-immune source lies above the computable
sequential-innovation degree.  The trivial full-schedule channel places
$\deg_T(X)$ in $\Specinn(X)$.  If $R$ is $\MartinLof$ random, then
$\mathbf0\notin\Specinn(R)$ although $\deg_T(R)$ belongs to the spectrum, so
the spectrum is not a Turing lower ideal.
\end{proof}

\begin{corollary}[A truth-table degree splits into sequential-innovation degrees]
\label{cor:tt-degree-splits-sir}
Let $R$ be $\MartinLof$ random and let $R^*$ be any computably padded
presentation with an infinite predictable reservoir.  Then
\[
 R\equiv_{tt}R^*,
 \qquad
 R\not\equiv_{\mathrm{sir}}R^*.
\]
Thus sequential-innovation equivalence is a proper refinement of truth-table
equivalence even within a single truth-table degree.
\end{corollary}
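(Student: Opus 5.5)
The plan is to verify the two displayed relations separately, using only the padding construction and the reservoir criterion.

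For truth-table equivalence, the padded presentation $R^*$ is obtained from $R$ by a computable insertion of constant values on a computable set. Two natural cases are the even--odd padding of Corollary~\ref{cor:padded} and the $C$-padding of Theorem~\ref{thm:zero-density-reservoir}. I will read ``any computably padded presentation'' in exactly this sense: the data coordinates form a computable set $D$ on which $R$ is written in order, and the remaining coordinates carry computable content. Under that reading, each bit of $R^*$ is either computable outright or equal to a single bit of $R$ at a computably determined position, so $R^*\leq_{tt}R$. Conversely, deleting the padding coordinates recovers $R$ by reading $R^*$ along the computable enumeration of $D$, so $R\leq_{tt}R^*$. This step is routine.

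For SIR inequivalence, I show $R^*\nlesir R$. Suppose toward a contradiction that $R^*\lesir R$. By hypothesis $R^*$ has an infinite predictable reservoir, so Proposition~\ref{prop:computable-trace-reservoir} gives $\zeroSeq\lesir R^*$. Transitivity (Proposition~\ref{prop:sir-preorder}) then yields $\zeroSeq\lesir R$. This contradicts Corollary~\ref{cor:sir-proper}, or equivalently Proposition~\ref{prop:random-reservoir-immune} combined with Proposition~\ref{prop:computable-trace-reservoir}: the random real $R$ is reservoir-immune, so it has no computable trace. An alternative route is Corollary~\ref{cor:random-sir-lower-region}: any $Y\lesir R$ is Martin--L\"of random, whereas $R^*$ is not random, since it is constant on an infinite computable set.

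The only point needing care is the meaning of ``computably padded''. The argument for $R^*\nlesir R$ uses only the stated predictable reservoir, so it does not depend on that meaning. The truth-table equivalence, by contrast, needs the padding to be a computable coordinate insertion of $R$, and I take this as the definition. For completeness I will also note that $R\lesir R^*$ holds when the data coordinates are simply $R$ placed in order on $D$: the schedule enumerating $D$ with the zero predictor is a valid SIR reduction. So the two presentations are strictly comparable, $\deg_{\mathrm{sir}}(R)<\deg_{\mathrm{sir}}(R^*)$, although strictness is not needed for the corollary.
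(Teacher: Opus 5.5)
Your proposal is correct and follows the paper's own argument. Truth-table equivalence comes from the padding and deletion maps. For SIR inequivalence, $R^*\lesir R$ would transfer the computable trace $\zeroSeq\lesir R^*$ to $R$ by transitivity, which contradicts reservoir immunity of $R$ (Proposition~\ref{prop:random-reservoir-immune}). Your additional remarks on the meaning of ``computably padded'' and on the strict comparison $\deg_{\mathrm{sir}}(R)<\deg_{\mathrm{sir}}(R^*)$ are accurate but not needed.
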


\begin{proof}
The padding and deletion maps are truth-table reductions.  If
$R\equiv_{\mathrm{sir}}R^*$, then the computable trace available from $R^*$
would, by transitivity, also be available from $R$, contradicting
Proposition~\ref{prop:random-reservoir-immune}.
\end{proof}

The preceding statements explain the role of immunity.  Ordinary bi-immunity
blocks computable reservoirs visible to constant predictors, whereas reservoir
immunity blocks them for every sequential computable predictor.  It is exactly the
obstruction preventing the computable sequential-innovation degree from lying below
a presentation.

\section{SIR spectra of random, Bernoulli, and c.e. sources}

By Corollary~\ref{cor:random-sir-lower-region}, the fair-coin random region is
closed downward under SIR: no computable sequential procedure can derandomize
a Martin--L\"of-random source.  This rigidity coexists with extensive internal
order structure created by computable coordinate geometry.  We first separate
finite causal delay inside one Turing degree from genuine information loss
across Turing degrees.

For computable $p\in(0,1)$, let $\mu_p$ denote the Bernoulli
product measure with one-bit marginal $\mu_p\{1\}=p$.

\begin{lemma}[Bernoulli column independence]
\label{lem:bernoulli-column-independence}
Let $p\in(0,1)$ be computable, let $R$ be $\mu_p$-Martin--L\"of
random, and let $U,V\subseteq\omega$ be disjoint infinite computable sets.
Then
\[
        R\upharpoonright U
        \in \MLR_{\mu_p}^{R\upharpoonright V}.
\]
In particular, $R\upharpoonright U\nleq_T R\upharpoonright V$.
\end{lemma}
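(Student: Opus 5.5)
The plan is to reduce the lemma to van Lambalgen's theorem for computable Bernoulli measures. Write $W=\omega\setminus(U\cup V)$. Since $U$, $V$ and $W$ are computable and partition $\omega$, the restriction map $X\mapsto(X\upharpoonright U,X\upharpoonright V,X\upharpoonright W)$ is a computable homeomorphism of $2^\omega$ onto $(2^\omega)^3$; here a finite $W$ is handled as a finite string and an empty $W$ is dropped. Under this map $\mu_p$ is sent to $\mu_p\otimes\mu_p\otimes\mu_p$, because the coordinates are independent with the same marginal. An infinite computable set is enumerated in increasing order by a computable bijection with $\omega$, so each restriction carries $\mu_p$ on $2^\omega$ again. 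It follows that $R$ is $\mu_p$-random exactly when the pair $(R\upharpoonright U,R\upharpoonright(V\cup W))$ is random for the product measure $\mu_p\otimes\mu_p$. The same holds for the pair $(R\upharpoonright U,R\upharpoonright V)$, since projection is computable and measure preserving and therefore conserves randomness, as in Theorem~\ref{thm:residual-ls}.

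Next I apply van Lambalgen's theorem for computable product measures: if $(A,B)$ is $\mu_p\otimes\mu_p$-random, then $A$ is $\mu_p$-random relative to $B$. For computable $p$ this is standard. If a self-contained argument is wanted, I would rerun the usual proof. Let $(G_n)$ be a $B$-relative $\mu_p$-test covering $A$, uniform in the oracle. For each oracle string, take the part of $G_n$ enumerated using only that oracle string, and from these build the $\Sigma^0_1$ class of pairs $(A',B')$ lying in them. Its $\mu_p\otimes\mu_p$ measure is at most $2^{-n}$ by Fubini, once each relativized component is truncated to measure at most $2^{-n}$; such a truncation is effective because $p$ is computable. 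This yields an unrelativized test covering $(A,B)$. Applying the theorem with $A=R\upharpoonright U$ and $B=R\upharpoonright V$ gives the first claim.

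For the second claim, suppose $R\upharpoonright U\leq_T R\upharpoonright V$. Then $\{R\upharpoonright U\}$ is a $\Pi^0_1(R\upharpoonright V)$ singleton: it is the class of $X$ that agree with the reduction's output on every prefix. Its $\mu_p$-measure is $0$, because $0<p<1$ gives every atom measure zero. So the $R\upharpoonright V$-computable test whose $n$-th level is the cylinder of $R\upharpoonright U$ restricted to the first $m_n$ bits, with $m_n$ large enough that $\max(p,1-p)^{m_n}\leq2^{-n}$, covers $R\upharpoonright U$. This contradicts the relative randomness just established.

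I expect the main obstacle to be getting the van Lambalgen step right for $\mu_p$ rather than for fair-coin measure, in particular the uniform truncation of relativized tests. Using the computability of $p$, to evaluate $\mu_p$ of clopen sets, should make this routine. A secondary concern is the degenerate case where $W$ is finite or empty, which I handle as noted above.
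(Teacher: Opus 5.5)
Your proposal is correct and follows essentially the paper's argument. The computable coordinate map $R\mapsto(R\upharpoonright U,R\upharpoonright V)$ pushes $\mu_p$ to $\mu_p\otimes\mu_p$; randomness conservation and the product-measure van Lambalgen theorem then give relative randomness; and nonatomicity of $\mu_p$ rules out $R\upharpoonright U\leq_T R\upharpoonright V$. The three-way split through $W$ is harmless but unnecessary, and for the pair $(R\upharpoonright U,R\upharpoonright V)$ you only get, and only need, the implication from randomness of $R$, not an equivalence.
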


\begin{proof}
Enumerate $U$ and $V$ increasingly.  The coordinate map
\[
        \Theta_{U,V}(R)
        =\bigl(R\upharpoonright U,R\upharpoonright V\bigr)
\]
is computable and pushes $\mu_p$ forward to the product measure
$\mu_p\times\mu_p$.  Randomness conservation for computable
measure-preserving maps therefore makes $\Theta_{U,V}(R)$
$\mu_p\times\mu_p$-Martin--L\"of random
\cite{HoyrupRojas2009}.  After interchanging the two coordinates, the
product-measure form of van Lambalgen's theorem gives
$R\upharpoonright U\in\MLR_{\mu_p}^{R\upharpoonright V}$; here the
conditional measure is the fixed computable measure $\mu_p$
\cite{vanLambalgen1990,Takahashi2011,Nies2009}.  Since $\mu_p$ is
nonatomic, no sequence computable from the oracle
$R\upharpoonright V$ can be $\mu_p$-Martin--L\"of random relative to that
oracle.  Hence $R\upharpoonright U\nleq_T R\upharpoonright V$.
\end{proof}
\begin{theorem}[Bernoulli-random universality]
\label{thm:bernoulli-poset-universality}
Suppose that $p\in(0,1)$ is computable and that $X$ has a
$\mu_p$-Martin--L\"of-random sequential innovation trace $R\lesir X$.
Then every computable countable partial order $(\mathcal A,\preceq)$ admits
an order embedding
\begin{equation}
        a\longmapsto \deg_{\mathrm{sir}}(R_a)
\end{equation}
into the SIR degrees strictly below $\deg_{\mathrm{sir}}(X)$, where every
$R_a$ is $\mu_p$-Martin--L\"of random. Consequently, below $X$ there are
$\mu_p$-random copies of every computable tree, every computable countable
linear order, countably infinite antichains, and strictly increasing chains
of every computable ordinal type $\alpha<\omega_1^{CK}$.
\end{theorem}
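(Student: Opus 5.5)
By transitivity of $\lesir$ (Proposition~\ref{prop:sir-preorder}), it suffices to build the embedding strictly below $\deg_{\mathrm{sir}}(R)$, where $R$ is the given $\mu_p$-random trace of $X$. Then each $R_a$ will lie at or below $R$, hence below $X$. Strictness below $X$ follows once each $R_a$ is strictly below $R$, since $X\lesir R_a$ together with $R\lesir X$ would give $R\lesir R_a$.

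\emph{Coding.} Fix a computable presentation of $(\mathcal A,\preceq)$ with universe $\omega$ (or a computable subset of it). Partition the \emph{odd} coordinates into infinite computable columns $C_i=\{\langle i,k\rangle\}$ with computable owner and rank, as in Theorem~\ref{thm:countable-realization}, and leave the even coordinates unused. For $a\in\mathcal A$ set
\[
 U_a=\bigcup_{b\preceq a}C_b ,
\]
which is computable uniformly in $a$ because $\preceq$ is computable. Put $R_a=R\upharpoonright U_a$, i.e.\ the trace of $R$ with the zero predictor on the schedule $U_a$. The map $\sigma\mapsto\sigma\upharpoonright U_a$ pushes $\mu_p$ to $\mu_p$ and is computable, so $R_a$ is $\mu_p$-random by randomness conservation, exactly as in the proof of Lemma~\ref{lem:bernoulli-column-independence}.

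\emph{Order preservation.} If $a\preceq a'$ then $U_a\subseteq U_{a'}$ are computable. Reading $R_a$ off $R_{a'}$ is coordinate projection onto the computable subset of positions of $U_{a'}$ lying in $U_a$. This is a zero-predictor trace, so $R_a\lesir R_{a'}$. Conversely, suppose $a\not\preceq a'$. Then $C_a\subseteq U_a$ but $C_a\cap U_{a'}=\varnothing$. If $R_a\lesir R_{a'}$, then $R\upharpoonright C_a\leq_T R_a\leq_T R\upharpoonright U_{a'}$ (using Proposition~\ref{prop:tt-bound}). This contradicts Lemma~\ref{lem:bernoulli-column-independence} applied to the disjoint infinite computable sets $C_a$ and $U_{a'}$. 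Hence $a\mapsto\deg_{\mathrm{sir}}(R_a)$ is an order embedding; injectivity on degrees follows from antisymmetry of $\preceq$.

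\emph{Strictness and consequences.} Each $R_a\lesir R$ by projection. Since $U_a$ misses all even coordinates, $R\lesir R_a$ would give $R\upharpoonright 2\omega\leq_T R\upharpoonright U_a$, which again contradicts Lemma~\ref{lem:bernoulli-column-independence}. So every $R_a$ lies strictly below $R$, and hence strictly below $X$ by the transitivity remark above. The listed consequences are instances of computable countable partial orders:
\begin{itemize}[label=--]
\item computable trees, ordered by extension;
\item computable linear orders, including $(\mathbb Q,<)$;
\item the infinite antichain, i.e.\ the discrete order on $\omega$;
\item computable well-orders of type $\alpha$ for each $\alpha<\omega_1^{CK}$, by the definition of $\omega_1^{CK}$.
\end{itemize}

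\emph{Main obstacle.} The only delicate point is the uniformity and computability of the schedules $U_a$, since an SIR schedule must be a computable set. This is where computability of $\preceq$ is used: membership of $\langle i,k\rangle$ in $U_a$ is decided by computing $i$ and checking $i\preceq a$. Everything else reduces to randomness conservation and the van Lambalgen-type independence already packaged in Lemma~\ref{lem:bernoulli-column-independence}.
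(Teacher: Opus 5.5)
Your proof is correct and follows essentially the paper's route. The schedules are unions of columns over the principal ideals $\{b\preceq a\}$. Coordinate projection gives the forward direction, and Lemma~\ref{lem:bernoulli-column-independence} applied to an omitted column gives the converse. A reserved infinite computable part of $R$ that no schedule uses (your even coordinates, the paper's odd-indexed columns) yields $R\nleq_T R_a$, and hence strictness below $X$ via $R\lesir X$.

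The only cosmetic fix is notational: write your columns as, e.g., $\{2\langle i,k\rangle+1:k\in\omega\}$, so that they actually partition the odd coordinates rather than all of $\omega$.
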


\begin{proof}
Assume that $\mathcal A$ is a computable subset of $\omega$ and that
$\preceq$ is computable. For $a\in\mathcal A$, put
\begin{equation}
        D_a=\{b\in\mathcal A:b\preceq a\},
        \qquad
        \widehat D_a=\{2b:b\in D_a\}.
\end{equation}
Each $\widehat D_a$ is a nonempty computable set, and reflexivity gives
\begin{equation}
        \widehat D_a\subseteq\widehat D_c
        \quad\Longleftrightarrow\quad
        a\preceq c.
\end{equation}
Partition the coordinates of $R$ into canonical computable infinite columns
$(C_i)_{i\in\omega}$ and define
\begin{equation}
        S_a=\bigcup_{i\in\widehat D_a}C_i,
        \qquad
        R_a=R\upharpoonright S_a.
\end{equation}
Every $R_a$ is $\mu_p$-Martin--L\"of random by randomness conservation
under the computable coordinate projection to $S_a$, and it is a sequential
innovation trace of $R$, hence of $X$.

If $a\preceq c$, computable coordinate projection gives $R_a\lesir R_c$.
Conversely, if $a\npreceq c$, then
$2a\in\widehat D_a\setminus\widehat D_c$. Apply
Lemma~\ref{lem:bernoulli-column-independence} with $U=C_{2a}$ and $V=S_c$.
It gives
\begin{equation}
        R\upharpoonright C_{2a}\nleq_T R_c.
\end{equation}
The left-hand side is computable from $R_a$, so $R_a\nleq_T R_c$, and
therefore $R_a\nlesir R_c$.

Finally, every $S_a$ omits the infinite computable union $O$ of all
odd-indexed columns. Lemma~\ref{lem:bernoulli-column-independence}, applied
to $U=O$ and $V=S_a$, gives
\begin{equation}
        R\upharpoonright O\nleq_T R_a.
\end{equation}
Since $R\upharpoonright O\leq_T R$, this implies $R\nleq_T R_a$, and hence
$R_a<_T R$. If $X\lesir R_a$, then
$R\lesir X\lesir R_a$, contradicting $R\nleq_T R_a$. Thus every embedded
degree is strictly below $\deg_{\mathrm{sir}}(X)$.
\end{proof}

For $k<\omega$, write
\[
        \sigma^kX(n)=X(n+k)
\]
for the $k$-fold left shift.

\begin{lemma}[Finite variants and shifted innovations]
\label{lem:finite-variants-shifted-innovations}
\begin{enumerate}[label=(\roman*)]
\item If $X$ and $Y$ differ on only finitely many coordinates, then
$X\equivsir Y$.
\item For every total computable predictor $P$ and every $k<\omega$,
\[
        \sigma^k I_P^X\equivsir\sigma^kX.
\]
\end{enumerate}
\end{lemma}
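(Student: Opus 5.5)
Both parts reduce to writing down explicit channels; everything follows from the definition of $\lesir$ through the exact trace characterization (Theorem~\ref{thm:exact-trace-characterization}) and from transitivity (Proposition~\ref{prop:sir-preorder}).

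For part (i), suppose $X$ and $Y$ agree on every coordinate $n\ge N$. We use the full schedule $S=\omega$ and let $D=\{n<N: X(n)\neq Y(n)\}$, which is a finite set we hard-code. Define the predictor
\[
P(\sigma)=[\,|\sigma|\in D\,],
\]
the constant correction bit that depends only on the length. Then
\[
J_{P,\omega}^X(n)=X(n)\oplus[n\in D]=Y(n)
\]
for every $n$, so $Y\lesir X$. Equivalently, the truth tables $\theta_n(\sigma b)=b\oplus[n\in D]$ with $s(n)=n$ are last-bit balanced. By symmetry $X\lesir Y$ as well.

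For part (ii), we produce channels in both directions directly.

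\emph{From $\sigma^kX$ to $\sigma^kI_P^X$.} The obstacle is that $(\sigma^kI_P^X)(n)=X(n+k)\oplus P(X\upharpoonright(n+k))$, and the predictor must be applied to the full prefix of $X$. The shifted source only reveals $(\sigma^kX)\upharpoonright n=X(k)\cdots X(n+k-1)$, so the first $k$ bits of $X$ are missing. These are a fixed finite string $\rho=X\upharpoonright k$, which we hard-code. Set
\[
Q(\tau)=P(\rho\tau).
\]
Then on the full schedule,
\[
(\sigma^kX)(n)\oplus Q\bigl((\sigma^kX)\upharpoonright n\bigr)=X(n+k)\oplus P(X\upharpoonright(n+k)),
\]
which is exactly $(\sigma^kI_P^X)(n)$. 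Hence $\sigma^kI_P^X\lesir\sigma^kX$.

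\emph{From $\sigma^kI_P^X$ to $\sigma^kX$.} For the converse we use the inverse predictor from Proposition~\ref{prop:innovation-homeomorphism}. Given $I_P^X$, the source is recovered by $X(m)=I_P^X(m)\oplus\widehat P(I_P^X\upharpoonright m)$, where $\widehat P$ first reconstructs the source prefix recursively and then applies $P$. Again hard-code $\rho'=I_P^X\upharpoonright k$ and put
\[
Q'(\tau)=\widehat P(\rho'\tau).
\]
On the full schedule this channel sends $\sigma^kI_P^X$ to $\sigma^kX$, so $\sigma^kX\lesir\sigma^kI_P^X$.

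Nonuniformity in the finite parameters is allowed, since $\lesir$ only asks for some computable channel. So each direction is a single channel, and transitivity is not even needed.

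The only delicate point is noticing that the shift loses the initial prefix. This is handled by hard-coding it, which is legitimate because it is finite data and the resulting predictor is still total computable.
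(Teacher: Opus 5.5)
Your proof is correct and follows the paper's argument. For (i) you use full-schedule channels with a hard-coded finite correction. For (ii) you combine a hard-coded initial segment with the forward predictor in one direction and with the recursive source reconstruction of Proposition~\ref{prop:innovation-homeomorphism} in the other. The only cosmetic difference is that you hard-code $I_P^X\upharpoonright k$ rather than $X\upharpoonright k$ in the reverse direction, which carries the same information.
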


\begin{proof}
For~(i), use the full schedule and hardwire the finitely many values of
$X(n)\oplus Y(n)$ into the predictor.  The same construction works in both
directions.

For~(ii), hardwire $X\upharpoonright k$.  From a prefix of $\sigma^kX$ one
can reconstruct the corresponding prefix of $X$ and apply $P$, giving
$\sigma^k I_P^X\lesir\sigma^kX$.  Conversely, from a prefix of
$\sigma^k I_P^X$, recursively reconstruct the corresponding source prefix,
again using the hardwired initial segment.  This gives the reverse full-schedule
SIR reduction.
\end{proof}

\begin{theorem}[Random interval dichotomy]
\label{thm:random-interval-dichotomy}
Let $B$ be Martin--L\"of random and let $A\lesir B$.
Exactly one of the following alternatives holds.
\begin{enumerate}[label=(\roman*)]
\item $A\equiv_TB$.  Then there is a unique $k<\omega$ such that
\[
        A\equivsir\sigma^kB.
\]
The shifts form a strict chain
\[
\cdots<_{\mathrm{sir}}\sigma^2B
<_{\mathrm{sir}}\sigma B<_{\mathrm{sir}}B,
\]
and consecutive shifts are cover pairs.  More generally, the interval between
$\sigma^kB$ and $B$ consists exactly of the $k+1$ shift degrees
$\sigma^jB$, $0\leq j\leq k$.
\item $A<_TB$.  Then the open SIR interval
\[
\bigl(\deg_{\mathrm{sir}}(A),\deg_{\mathrm{sir}}(B)\bigr)
\]
contains an order-isomorphic copy of $(\mathbb Q,<)$ represented entirely by
Martin--L\"of-random reals.
\end{enumerate}
\end{theorem}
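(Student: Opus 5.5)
The plan is to reduce everything to the randomness of the full innovation sequence. Write $A=J_{P,S}^B$ and $R=I_P^B$. By Proposition~\ref{prop:innovation-homeomorphism}, $R$ is Martin--L\"of random and $R\equivsir B$. By definition of the trace, $A=R\upharpoonright S$, the plain coordinate projection. Let $V=\omega\setminus S$. Since $A\leq_T B$ always holds, the two alternatives are mutually exclusive and exhaust all cases, so it remains to analyse each one.

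\emph{Case $A\equiv_T B$.} First show that $V$ is finite. If $V$ were infinite, Lemma~\ref{lem:bernoulli-column-independence} with $p=1/2$, $U=V$ and $V=S$ would give $R\upharpoonright V\nleq_T R\upharpoonright S=A$. But $R\equiv_T B\equiv_T A$, a contradiction. So $|V|=k$ for some $k<\omega$, and $s_n=n+k$ for all large $n$. Hence $A$ and $\sigma^kR=\sigma^kI_P^B$ differ in only finitely many coordinates. Lemma~\ref{lem:finite-variants-shifted-innovations}(i) and~(ii) then give $A\equivsir\sigma^kI_P^B\equivsir\sigma^kB$.

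For uniqueness and strictness, I will show that $\sigma^jB\lesir\sigma^{k}B$ fails whenever $j<k$. Projection onto the coordinates $\geq k-j$ already gives $\sigma^kB\lesir\sigma^jB$, so this yields both $\sigma^kB<_{\mathrm{sir}}\sigma^jB$ and uniqueness of $k$. Suppose instead that $\sigma^jB=F(\sigma^kB)$ for a channel map $F$. Put $d=k-j\geq1$, $Y=\sigma^jB$ and $H(Z)=F(\sigma^dZ)$. Then $Y$ lies in the $\Pi^0_1$ class $\{Z:H(Z)=Z\}$, and iterating gives $Z=H^n(Z)$. The map $H^n$ depends only on coordinates $\geq nd$, so $Z\upharpoonright m=g_n(\sigma^{nd}Z)$ for a computable $g_n$. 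When $nd\geq m$ the tail is independent of $Z\upharpoonright m$ under $\lambda$, so the class has measure at most $2^{-m}$ for every $m$. A null $\Pi^0_1$ class contains no random real, which is the desired contradiction.

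For the cover and interval claims, let $\sigma^kB\lesir A'\lesir B$. Then $B\equiv_T\sigma^kB\leq_T A'\leq_T B$, so $A'$ falls under this case and $A'\equivsir\sigma^jB$ for some $j$. The strict chain forces $j\leq k$, so the interval consists of exactly $k+1$ degrees, and consecutive shifts are covers. I expect this no-collapse step, the measure-zero fixed-point argument, to be the main technical obstacle.

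\emph{Case $A<_T B$.} Now $V$ is infinite, since a finite $V$ gives $A\equiv_T B$ by the argument above. Split $V$ into infinitely many infinite computable columns $V_0,V_1,\ldots$, and fix a computable enumeration $(r_n)$ of $\mathbb Q$. For $q\in\mathbb Q$ put
\[
W_q=\bigcup\{V_n:r_n<q\},\qquad R_q=R\upharpoonright(S\cup W_q),
\]
noting that $W_q$ is computable uniformly in $q$. Each $R_q$ is random by randomness conservation under projection. Coordinate projection gives $A\lesir R_q\lesir R\equivsir B$, and also $R_q\lesir R_{q'}$ whenever $q<q'$.

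Strictness comes from Lemma~\ref{lem:bernoulli-column-independence} applied to disjoint infinite computable sets. For $q<q'$, some column $V_n$ with $q\leq r_n<q'$ lies in $W_{q'}\setminus W_q$. Then $R\upharpoonright V_n\nleq_T R_q$, while $R\upharpoonright V_n\leq_T R_{q'}$, so $R_{q'}\nlesir R_q$. A column with $r_n<q$ gives $R_q\nleq_T A$, and a column with $r_n>q$ gives $R\nleq_T R_q$, hence $B\nlesir R_q$. Thus $q\mapsto\deg_{\mathrm{sir}}(R_q)$ embeds $(\mathbb Q,<)$ into the open interval by random reals.
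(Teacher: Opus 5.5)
The overall structure is right and follows the paper. The finiteness of $\omega\setminus S$ via column independence, the identification $A\equivsir\sigma^kB$ through Lemma~\ref{lem:finite-variants-shifted-innovations}, the deduction of the interval and cover claims from the classification, and the $\mathbb Q$-embedding by adding rational-indexed columns of $\omega\setminus S$ are all sound. The gap is in the no-collapse step, exactly where you flagged the difficulty. The claim that $H^n$ depends only on coordinates $\geq nd$ is false. A channel map $F(X)(m)=X(s_m)\oplus P(X\upharpoonright s_m)$ lets the predictor read the whole prefix, including $X(0)$. So every output bit of $H(Z)=F(\sigma^dZ)$ may depend on $Z(d)$, and iterating $H$ does not push the dependence rightward. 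For instance, take the full schedule, $P(\tau)=\tau(0)$ for $|\tau|\geq1$, $P(\varnothing)=0$ and $d=1$. Then $H(Z)(m)=Z(m+1)\oplus Z(1)$ for $m\geq1$, so $H^2(Z)(0)=Z(2)\oplus Z(1)$, which depends on coordinate $1<2d$. Hence there is no $g_n$ with $Z\upharpoonright m=g_n(\sigma^{nd}Z)$, and your $2^{-m}$ measure bound is unsupported. Uniqueness of $k$ and strictness of the shift chain both rest on this step.

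The fix is to read the fixed-point relation coordinatewise instead of iterating it; this is the paper's argument. Put $X=\sigma^kB$ and suppose $\sigma^jB=J_{P,S}^X$ with $d=k-j\geq1$. For $m\geq d$ we have $X(m-d)=B(m+j)=\sigma^jB(m)=X(s_m)\oplus P(X\upharpoonright s_m)$. Since $s_m\geq m>m-d$, the total computable predictor $\tau\mapsto\tau(m-d)\oplus P(\tau)$ at length $s_m$ is correct on the infinite computable set $\{s_m:m\geq d\}$. That set is a predictable reservoir for the random real $\sigma^kB$, contradicting Proposition~\ref{prop:random-reservoir-immune}. In your language: each such equation pins the fresh coordinate $s_m$ given the earlier ones, so your $\Pi^0_1$ class has measure at most $2^{-M}$ for every $M$. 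The paper runs this only for $d=1$ and reaches general $d$ by transitivity. Your direct version for arbitrary $d$ then gives uniqueness of $k$ and the strict chain as you intended.
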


\begin{proof}
Choose a channel with
\[
        A=J_{P,S}^B
\]
after identifying $S$ with its increasing enumeration, and put
$I=I_P^B$.  By Proposition~\ref{prop:innovation-homeomorphism},
$I\equivsir B$ and $I$ is Martin--L\"of random, while
\[
        A=I\upharpoonright S.
\]

Suppose first that $A\equiv_TB$.  If $\omega\setminus S$ were infinite, then
van Lambalgen's theorem applied to the computable partition
$S\sqcup(\omega\setminus S)$ would make
$I\upharpoonright(\omega\setminus S)$ random relative to $A$.  It could not
be computed from $A$, contradicting $I\equiv_TA$.  Hence $S$ is cofinite.
If $k=|\omega\setminus S|$, then $A$ is a finite variant of $\sigma^kI$.
Lemma~\ref{lem:finite-variants-shifted-innovations} gives
$A\equivsir\sigma^kB$.

For every $k$, the tail projection gives
$\sigma^{k+1}B\lesir\sigma^kB$.  The reverse reduction is impossible.  Indeed,
put $X=\sigma^{k+1}B$ and suppose that
$\sigma^kB\lesir X$ via schedule $s_n$ and predictor $Q$.  For $n\geq1$,
\[
        X(n-1)=X(s_n)\oplus Q(X\upharpoonright s_n).
\]
Since $s_n\geq n$, define a total computable predictor $H$ at length
$s_n$, for $n\geq1$, by
\[
        H(\tau)=Q(\tau)\oplus\tau(n-1),
\]
where $n$ is recovered effectively from the scheduled length $s_n$, and define
$H$ arbitrarily elsewhere.  Then
$H(X\upharpoonright s_n)=X(s_n)$ on the infinite computable tail of the
schedule, contradicting reservoir immunity of the random real $X$.  Thus the
shift chain is strict.  If
$\sigma^kB\lesir Z\lesir B$, then $Z\equiv_TB$, so the preceding
classification gives $Z\equivsir\sigma^jB$ for a unique $j$.  The strict shift
order forces $0\leq j\leq k$, proving the cover and finite-interval claims.

Now suppose that $A<_TB$.  Then $D=\omega\setminus S$ is infinite.  Enumerate
$D=\{d_0<d_1<\cdots\}$, fix a computable one-to-one enumeration
$(r_i)_{i\in\omega}$ of $\mathbb Q\cap(0,1)$, and use a computable pairing
function.  For $q\in\mathbb Q\cap(0,1)$ define
\[
D_q=\{d_{\langle i,j\rangle}:r_i<q\},
\qquad
T_q=S\cup D_q,
\qquad
Z_q=I\upharpoonright T_q.
\]
Each $T_q$ is computable.  If $q<q'$, then
$D_q$, $D_{q'}\setminus D_q$, and $D\setminus D_{q'}$ are all infinite,
because the rationals are dense and each rational index carries an infinite
column.  Coordinate projection gives
\[
        A\lesir Z_q\lesir Z_{q'}\lesir I\equivsir B.
\]
Van Lambalgen's theorem applied to each newly added or omitted infinite column
shows that all three inequalities are strict already in Turing degree.  Every
$Z_q$ is Martin--L\"of random.  Hence
$q\mapsto\deg_{\mathrm{sir}}(Z_q)$ embeds
$(\mathbb Q\cap(0,1),<)$ into the open interval.
\end{proof}

\begin{theorem}[A random pair with no SIR join]
\label{thm:random-no-join}
Let $R$ be Martin--L\"of random and define its even and odd columns by
\[
        A(n)=R(2n),
        \qquad
        B(n)=R(2n+1).
\]
Then $A$ and $B$ are Martin--L\"of random, their SIR degrees are incomparable,
and they have no join in the full SIR degree structure.
\end{theorem}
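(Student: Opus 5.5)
The plan has three parts: randomness and incomparability of $A$ and $B$ come from earlier results, and the no-join claim is handled by a rigidity argument that pins any candidate join to a shift of $R$. The first two claims are routine. $A$ and $B$ are coordinate projections of $R$ onto the computable schedules of even and odd positions, so $A,B\lesir R$ and both are Martin--L\"of random by Corollary~\ref{cor:random-sir-lower-region}. Lemma~\ref{lem:bernoulli-column-independence} with $p=1/2$ and $U,V$ the even and odd positions gives $A\nleq_T B$ and $B\nleq_T A$. Since $\lesir$ implies $\leq_T$ (Theorem~\ref{thm:sir-hierarchy}), the SIR degrees of $A$ and $B$ are incomparable.

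For the no-join claim, suppose toward a contradiction that $Z$ represents a least SIR upper bound of $A$ and $B$. I will build a family of upper bounds. For each $m<\omega$, let $U_m$ be a computable permutation of the coordinates of $R$ that preserves the internal order of each column and places $B(n)$ after $A(n+m)$; let $V_m$ do the same with the roles of $A$ and $B$ exchanged. Each $U_m,V_m$ is random, since a computable permutation of coordinates preserves fair-coin measure. Each is also an SIR upper bound of $A$ and $B$, because projection onto the positions of either column is a scheduled projection with increasing positions. Hence $Z\lesir R$, $Z\lesir U_m$ and $Z\lesir V_m$ for every $m$. In particular $Z$ is random by Corollary~\ref{cor:random-sir-lower-region}. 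Since $A\oplus B\equiv_T R$ and $A,B\leq_T Z$, we get $Z\equiv_T R$. Theorem~\ref{thm:random-interval-dichotomy}(i) then gives a unique $k$ with $Z\equivsir\sigma^kR$.

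The core step is to show that $\sigma^kR\lesir U_m$ (or $\sigma^kR\lesir V_m$) fails once $m$ is large relative to $k$. Suppose $\sigma^kR\lesir U_m$ via schedule $s_n$ and predictor $Q$. Output $n$ equals $R(k+n)$ and is determined by $U_m\upharpoonright(s_n+1)$ together with a fresh balanced last bit. Take a long stretch of outputs $R(k+n),\dots,R(k+n')$; it contains about $(n'-n)/2$ odd coordinates, i.e.\ bits $B(j)$. These must be recovered causally from a prefix of $U_m$ in which the corresponding $B$-bits sit roughly $m$ positions later than the matching $A$-bits. I plan a counting and independence argument. By van Lambalgen, the $B$-bits not yet revealed in $U_m\upharpoonright(s_n+1)$ are random relative to everything revealed. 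So every $B(j)$ with $2j+1<k+n+1$ must lie inside the revealed prefix. Meanwhile the strict increase $s_n\geq n$ and the freshness of the last queried bit force each output to consume a new source bit. Comparing the number of $B$-bits required against the number available in the prefix of $U_m$ should show that the reduction eventually "runs ahead" of the delayed $B$-column by more than $k$ positions. At that point one can build a computable predictor that correctly predicts infinitely many scheduled bits of $U_m$, as in the proof that the shift chain is strict. This contradicts reservoir immunity of the random real $U_m$ (Proposition~\ref{prop:random-reservoir-immune}).

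I expect this last step to be the main obstacle. The difficulty is making the fresh-bit bookkeeping precise when the schedule may skip $A$-positions of $U_m$ and fold their values into the predictor. If a direct attack on $U_m$ is awkward, I will instead use the symmetric pair $U_m,V_m$: from $\sigma^kR\lesir U_m$ and $\sigma^kR\lesir V_m$, derive $\sigma^{k'}U_m\lesir V_m$ or its reverse for a large shift $k'$. I will then contradict this via the strict, finite-interval shift structure of Theorem~\ref{thm:random-interval-dichotomy}(i), applied after identifying $V_m$ with a shifted finite rearrangement of $U_m$.
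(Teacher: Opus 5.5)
Your argument is correct up to $Z\equivsir\sigma^kR$, which is also where the paper arrives. The gap is the step that is supposed to produce the contradiction. You need a common SIR upper bound $W$ of $A$ and $B$ with $\sigma^kR\nlesir W$, and you never prove $\sigma^kR\nlesir U_m$ (or $\nlesir V_m$).

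The counting paragraph is only a heuristic. Van Lambalgen makes an unrevealed bit random relative to the revealed prefix. But a fixed uniform family of truth tables can still output such a bit correctly for finitely many $n$. A contradiction requires an infinite computable set of sites together with a total computable predictor that is correct on all of them. ``Running ahead of the delayed $B$-column by more than $k$ positions'' is neither defined nor turned into such a predictor.

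The fallback also fails. $V_m$ is a different infinite rearrangement of the coordinates of $R$, not a finite variant of a shift of $U_m$. So the shift structure of Theorem~\ref{thm:random-interval-dichotomy}(i) cannot be invoked through that identification. You also do not indicate how $\sigma^{k'}U_m\lesir V_m$ would be derived. Refuting it would need exactly the nonreduction that is missing.

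The missing idea is that the finite shifts of $R$ already serve as the upper bounds you need. For every $m$, $A\lesir\sigma^mR$ via the schedule $s_n=n$ for $n<N$ and $s_n=2n-m$ for $n\geq N$, for any $N>m$. The finitely many initial outputs are hardwired into the predictor, and $P=0$ afterwards. Replacing $2n-m$ by $2n+1-m$ gives $B\lesir\sigma^mR$. Leastness then yields $\sigma^kR\equivsir Z\lesir\sigma^{k+1}R$, contradicting the strict shift chain already proved in Theorem~\ref{thm:random-interval-dichotomy}(i). This is the paper's proof, and it needs no new nonreduction.

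Your $U_m$ route can also be completed, using the same reservoir argument that makes the shift chain strict. Suppose $U(\pi(i))=R(i)$ for a computable permutation $\pi$, and $\sigma^kR\lesir U$ via $(s_n,P)$. Then $s_n=\pi(k+n)$ for all but finitely many $n$, for two reasons:
\begin{itemize}
\item If $s_n<\pi(k+n)$ infinitely often, the sites $\pi(k+n)$ form a predictable reservoir for $U$ via $\tau\mapsto\tau(s_n)\oplus P(\tau\upharpoonright s_n)$.
\item If $s_n>\pi(k+n)$ infinitely often, the sites $s_n$ form one via $\tau\mapsto\tau(\pi(k+n))\oplus P(\tau)$.
\end{itemize}
Both site sets are computable because $s$ and $\pi^{\pm1}$ are, and both contradict reservoir immunity of the random real $U$. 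Hence $\pi$ must be increasing on a tail. This already fails for $U_1$, since $R(2n+1)=B(n)$ is placed after $R(2n+2)=A(n+1)$. So no largeness of $m$ relative to $k$ is needed. Either way, this nonreduction argument is the content your proposal leaves out.
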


\begin{proof}
The identity $R=A\oplus B$ and van Lambalgen's theorem show that $A$ and $B$
are Martin--L\"of random and Turing incomparable, hence SIR incomparable.

We first isolate the mechanism producing common upper bounds.  For every
$m<\omega$, put $X_m=\sigma^mR$.  Then both $A$ and $B$ are SIR-reducible
to $X_m$.  For $A$, choose $N>m$ and define a strictly increasing schedule by
\[
 s_n=n\quad(n<N),
 \qquad
 s_n=2n-m\quad(n\geq N).
\]
At the finitely many initial scheduled lengths, define the predictor so that
\[
 P(X_m\upharpoonright n)=X_m(n)\oplus A(n)
 \qquad(n<N),
\]
and extend these finitely prescribed values computably to all strings of the
corresponding lengths.  At every later scheduled length set $P$ equal to zero.
Since
\[
 X_m(2n-m)=R(2n)=A(n)
 \qquad(n\geq N),
\]
the resulting trace is exactly $A$.  Replacing the eventual schedule by
$s_n=2n+1-m$ gives $B\lesir X_m$.  Thus every finite tail
\[
 \sigma^mR
\]
is a common SIR upper bound of $A$ and $B$.

Suppose now that a join $J$ existed.  Since $R$ is one of these common upper
bounds, leastness gives $J\lesir R$.  Conversely, $A,B\lesir J$ implies
\[
 R=A\oplus B\leq_TJ.
\]
Hence $J\equiv_TR$.  The same-Turing-degree clause of
Theorem~\ref{thm:random-interval-dichotomy} therefore yields
\[
 J\equivsir\sigma^kR
\]
for some $k<\omega$.  But $\sigma^{k+1}R$ is also a common upper bound, so
leastness of $J$ gives
\[
 J\lesir\sigma^{k+1}R.
\]
Combining this with $\sigma^kR\lesir J$ gives
\[
 \sigma^kR\lesir\sigma^{k+1}R,
\]
contradicting the strict shift chain.  Thus no join exists.

The common-upper-bound construction itself uses only the interleaving
$R=A\oplus B$.  Reservoir immunity supplies the strictness of the shift
chain; Martin--L\"of randomness is used here to obtain incomparability and to
classify every same-Turing-degree SIR descendant of $R$ as a finite shift.
\end{proof}
\begin{figure}[!t]
\centering
\begin{minipage}[t]{0.42\textwidth}
\centering
\resizebox{\linewidth}{!}{%
\begin{minipage}{56mm}
\centering
\textbf{Upper region: truth-table spine}\par\smallskip
\begin{tikzpicture}[
  >=Latex,
  node distance=10mm and 12mm,
  every node/.style={font=\small,align=center},
  degree/.style={draw,rounded corners,inner sep=5pt}
]
\node[degree] (z) {$\mathbf0_{\mathrm{sir}}$};
\node[degree,above left=of z] (a) {c.e. $A$};
\node[degree,above right=of z] (b) {c.e. $B$};
\node[degree,above=of $(a)!0.5!(b)$] (j) {$A\oplus B$};
\draw[->] (z) -- (a);
\draw[->] (z) -- (b);
\draw[->] (a) -- (j);
\draw[->] (b) -- (j);
\node[draw,rounded corners,very thick,fit=(z)(a)(b)(j),inner sep=8pt] (box) {};
\node[below=4mm of box,text width=56mm]
{$\mathcal U_{\mathrm{sir}}\cong\mathcal D_{tt}$; the c.e. part is shown.
Joins are effective disjoint unions.};
\end{tikzpicture}
\end{minipage}}
\end{minipage}
\hfill
\begin{minipage}[t]{0.54\textwidth}
\centering
\resizebox{\linewidth}{!}{%
\begin{minipage}{68mm}
\centering
\textbf{Random subgeometry inside RI}\par\smallskip
\begin{tikzpicture}[
  >=Latex,
  node distance=8mm and 18mm,
  every node/.style={font=\small,align=center},
  degree/.style={draw,rounded corners,inner sep=4pt}
]
\node[degree] (star) {spine representative in $\deg_{tt}(R)$};
\node[degree,below=of star] (r) {$R$};
\node[degree,below=of r] (s1) {$\sigma R$};
\node[degree,below=of s1] (s2) {$\sigma^2R$};
\node[below=3mm of s2] (dots) {$\vdots$};
\node[degree,below left=11mm and 15mm of dots] (e) {$R_{\rm even}$};
\node[degree,below right=11mm and 15mm of dots] (o) {$R_{\rm odd}$};
\draw[->] (r) -- (star);
\draw[->] (s1) -- (r);
\draw[->] (s2) -- (s1);
\draw[->] (dots) -- (s2);
\draw[->,dashed] (e) -- (r);
\draw[->,dashed] (o) -- (r);
\draw[->,dashed] (e) -- (s1);
\draw[->,dashed] (o) -- (s1);
\draw[->,dashed] (e) -- (s2);
\draw[->,dashed] (o) -- (s2);
\draw[densely dashed] (e) -- node[below] {incomparable; no join} (o);
\node[left=10mm of s1,text width=30mm] {$\mathbb Q$ embeds in every interval
with a strict Turing-degree drop};
\node[draw,dashed,rounded corners,fit=(r)(s1)(s2)(dots)(e)(o),inner sep=7pt] (rbox) {};
\node[below=4mm of rbox,text width=61mm]
{$\mathcal R_{\mathrm{sir}}$ is downward closed and disjoint from
$\mathcal U_{\mathrm{sir}}$; the displayed random degrees lie inside the larger RI region.};
\end{tikzpicture}
\end{minipage}}
\end{minipage}
\caption{The truth-table spine and the random subgeometry of the complementary RI region.  Reducibility arrows
point upward.  The upper cone above the computable degree is the truth-table
spine and supports joins.  Its complement is the downward-closed RI region;
the figure displays the fair-coin Martin--L\"of-random subclass, within which
every SIR output remains random.  A random presentation reduces to the unique
greatest SIR degree in its truth-table degree via the padding bridge
$R\mapsto R^*$.  Inside the random subgeometry, finite shifts form discrete
causal layers, intervals crossing Turing degrees contain copies of
$\mathbb Q$, and the even and odd columns of $R$ have common upper bounds but
no least one.}
\label{fig:sir-spine-random-region}
\end{figure}

\begin{problem}[Beyond Martin--L\"of randomness]
\label{prob:reservoir-immune-structure}
Determine which parts of Theorems~\ref{thm:random-interval-dichotomy} and
\ref{thm:random-no-join} extend from Martin--L\"of-random presentations to
arbitrary reservoir-immune presentations.  The proofs above use van
Lambalgen's theorem to ensure that every omitted infinite computable column
remains noncomputable relative to the retained columns.  Reservoir immunity
alone does not presently supply an equivalent relative-independence principle.
\end{problem}

\begin{corollary}[Spectral barrenness under thin truth-table visibility]
\label{cor:thin-cone-no-random-trace}
If the truth-table-visible degree set of $X$ contains no two incomparable noncomputable
Turing degrees, then $X$ has no \MartinLof-random sequential innovation trace.  In
particular, no real of minimal nonzero Turing degree has a \MartinLof-random
trace.
\end{corollary}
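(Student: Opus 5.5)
We argue by contraposition: assume $X$ has a \MartinLof-random sequential innovation trace $R\lesir X$, and produce two incomparable noncomputable Turing degrees in $\TTvis(X)$.

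\textbf{Step 1: split $R$ into columns.} Take the even and odd columns
\[
A(n)=R(2n),\qquad B(n)=R(2n+1).
\]
Each is obtained from $R$ by projecting onto a computable schedule, so $A,B\lesir R$. Transitivity (Proposition~\ref{prop:sir-preorder}) then gives $A,B\lesir X$. Proposition~\ref{prop:tt-bound} upgrades this to $A,B\leq_{tt}X$, so $\deg_T(A)$ and $\deg_T(B)$ both lie in $\TTvis(X)$. Equivalently, $\deg_T(A),\deg_T(B)\in\Specinn(X)\subseteq\TTvis(X)$.

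\textbf{Step 2: the two degrees are noncomputable and incomparable.} This uses the van Lambalgen argument already applied in Theorem~\ref{thm:random-no-join}, or Lemma~\ref{lem:bernoulli-column-independence} with $p=1/2$. Taking $U$ as the even positions and $V$ as the odd positions shows that $A$ is \MartinLof\ random relative to $B$, and symmetrically $B$ is random relative to $A$. Hence $A\nleq_T B$ and $B\nleq_T A$. Both $A$ and $B$ are \MartinLof\ random, so neither is computable. This contradicts the hypothesis on $\TTvis(X)$.

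\textbf{Step 3: the minimal-degree case.} If $X$ has minimal nonzero Turing degree $\mathbf a$, then every $Y\leq_{tt}X$ satisfies $Y\leq_T X$, so $\deg_T(Y)\in\{\mathbf0,\mathbf a\}$. Thus $\TTvis(X)$ contains at most one noncomputable degree, and in particular no incomparable pair, so the first part applies.

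The argument has no real obstacle. The only point that needs care is to invoke van Lambalgen's theorem in its relative form, so as to obtain genuine Turing incomparability and not merely distinctness of the two degrees.
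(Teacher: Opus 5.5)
Your proof is correct and is essentially the paper's argument. The paper invokes Theorem~\ref{thm:bernoulli-poset-universality} with $p=1/2$, whose proof rests on the same column-splitting and van Lambalgen mechanism you apply directly to the even and odd columns; your inlined version also makes the Turing (not merely SIR) incomparability explicit and spells out the minimal-degree case, which the paper leaves implicit.
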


\begin{proof}
Every sequential innovation trace is truth-table reducible to $X$.  If one such trace were
$\MartinLof$ random, Theorem~\ref{thm:bernoulli-poset-universality}, with $p=1/2$, would produce
two incomparable noncomputable degrees in the truth-table-visible degree set of $X$.
\end{proof}

\begin{remark}
The conclusion is stronger than nonrandomness of the presentation itself.  A
spectrally barren real admits no computable sequential observation whose normalized
innovation sequence is $\MartinLof$ random.  Conversely, a single random
trace can never be an isolated witness: it automatically generates extensive
random degree structure below the original presentation.
\end{remark}

If $R$ is $\MartinLof$ random, every member of $\Specinn(R)$ is represented by
a \MartinLof-random real.  In particular,
\[
        \mathbf0\notin\Specinn(R).
\]
This complexity rigidity coexists with substantial degree-theoretic structure.

Theorem~\ref{thm:bernoulli-poset-universality} subsumes the customary
branching examples.  In the fair-coin case it yields, strictly below every
random trace, copies of every computable tree and linear order, countable
antichains, dense copies of $(\mathbb Q,<)$, and chains of every computable
ordinal type.  For a general computable $p\in(0,1)$, the same structures are
represented by $\mu_p$-Martin--L\"of-random reals.

We conclude with a mixed presentation that places the c.e. truth-table spine
and a transverse random family below a single SIR degree.  Recall that a
$2$-random real is \MartinLof\ random relative to $\emptyset'$, and that
every $2$-random degree forms a minimal pair with $\mathbf0'$; see
\cite{Nies2009,DowneyHirschfeldt2010}.

Fix a computable partition of the coordinates of a $2$-random real $R$ into
infinite columns,
\begin{equation}
R=\bigoplus_{i\in\omega}R_i,
\end{equation}
and, for every nonempty computable $I\subseteq\omega$, put
\begin{equation}
R_I=\bigoplus_{i\in I}R_i.
\end{equation}

\begin{theorem}[The c.e.--random bridge]
\label{thm:ce-random-bridge}
Let $R$ be $2$-random and put $X=\emptyset'\oplus R$. Then:

\begin{enumerate}[label=(\roman*)]
\item
\begin{equation}
\Specinn(X)=\TTvis(X)=\WTTvis(X),
\end{equation}
and every c.e. Turing degree belongs to $\Specinn(X)$.

\item For all nonempty computable $I,J\subseteq\omega$, the reals $R_I$ and
$\emptyset'\oplus R_I$ belong to $\Specinn(X)$, and
\begin{equation}
R_I\leq_T R_J
\quad\Longleftrightarrow\quad
R_I\leq_T\emptyset'\oplus R_J
\quad\Longleftrightarrow\quad
\emptyset'\oplus R_I\leq_T\emptyset'\oplus R_J
\quad\Longleftrightarrow\quad
I\subseteq J.
\end{equation}

\item Every $R_I$ is $2$-random and is Turing incomparable with every
noncomputable c.e. set, whereas every $\emptyset'\oplus R_I$ lies above all
c.e. degrees and is not Turing reducible to any $R_J$.
\end{enumerate}
\end{theorem}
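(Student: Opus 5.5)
Part (i) comes from the two collapse criteria of Theorem~\ref{thm:three-visibility-collapse}. First I will exhibit a predictable reservoir for $X=\emptyset'\oplus R$. The set $\emptyset'$ is c.e. and infinite, so it contains an infinite computable subset $S$. The even coordinates $\{2n:n\in S\}$ of $X$ all carry the value $1$, so the constant-one predictor is correct there; this is exactly the argument of Corollary~\ref{cor:ce-principal-ideal}. Second, $\emptyset'\leq_{tt}X$ holds by projection onto the even coordinates. Theorem~\ref{thm:three-visibility-collapse} then gives $\Specinn(X)=\TTvis(X)=\WTTvis(X)$. For the c.e. degrees, every c.e. set $W$ satisfies $W\leq_{1}\emptyset'$, so $W\leq_{tt}\emptyset'\leq_{tt}X$. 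Hence $\deg_T(W)\in\TTvis(X)=\Specinn(X)$.

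Part (ii) starts with membership. $R_I$ is a coordinate projection of $X$ onto a computable set of odd coordinates, so $R_I\lesir X$ directly. Likewise $\emptyset'\oplus R_I\leq_{tt}X$, so it lies in $\Specinn(X)$ by part (i).

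For the chain of equivalences, $I\subseteq J$ gives all three reductions by projection. The implications between the three Turing conditions run in the easy directions:
\[
R_I\leq_TR_J\Rightarrow R_I\leq_T\emptyset'\oplus R_J,
\qquad
\emptyset'\oplus R_I\leq_T\emptyset'\oplus R_J\Rightarrow R_I\leq_T\emptyset'\oplus R_J.
\]
So the crux is to show that $I\not\subseteq J$ implies $R_I\nleq_T\emptyset'\oplus R_J$. Pick $i\in I\setminus J$. Since $R$ is $2$-random, it is Martin--L\"of random relative to $\emptyset'$. The relativized van Lambalgen theorem, applied to the computable partition $C_i\sqcup(\omega\setminus C_i)$ and relativized to $\emptyset'$, makes $R_i$ random relative to $\emptyset'\oplus R_{\omega\setminus\{i\}}$, and hence relative to $\emptyset'\oplus R_J$. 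This is the relativized form of Lemma~\ref{lem:bernoulli-column-independence} with $p=1/2$. A real random relative to an oracle is not computable from it, so $R_i\nleq_T\emptyset'\oplus R_J$. Since $R_i\leq_TR_I$, we get $R_I\nleq_T\emptyset'\oplus R_J$. This single step yields all the reverse implications, and it is the main point to check carefully: relativized van Lambalgen needs the full product to be random relative to $\emptyset'$, which is exactly $2$-randomness transported along the computable coordinate rearrangement.

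Part (iii) goes as follows. $R_I$ is $2$-random because the coordinate projection is computable and measure preserving, and randomness conservation relativizes to the oracle $\emptyset'$. Now let $W$ be noncomputable and c.e.

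\textbf{$W\nleq_TR_I$.} The $2$-random degree $\deg_T(R_I)$ forms a minimal pair with $\mathbf0'$. Since $W\leq_T\emptyset'$, the reduction $W\leq_TR_I$ would make $W$ computable.

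\textbf{$R_I\nleq_TW$.} Otherwise $R_I\leq_T\emptyset'$, contradicting randomness of $R_I$ relative to $\emptyset'$.

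Next, $\emptyset'\oplus R_I$ computes $\emptyset'$ and therefore computes every c.e. set. Finally, $\emptyset'\oplus R_I\leq_TR_J$ would give $\emptyset'\leq_TR_J$, and the minimal-pair property would then make $\emptyset'$ computable, which is false. So $\emptyset'\oplus R_I$ is not Turing reducible to any $R_J$.
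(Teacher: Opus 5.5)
Your proof is correct and follows essentially the same route as the paper. A computable subset of $\emptyset'$ gives a predictable reservoir, so the two collapse criteria yield part (i); relativized van Lambalgen applied to a column $R_i$ with $i\in I\setminus J$ gives all the reverse implications in part (ii); and randomness relative to $\emptyset'$, together with the minimal-pair property of $2$-random degrees with $\mathbf 0'$, gives part (iii). You are somewhat more explicit than the paper, for instance in checking $\emptyset'\leq_{tt}X$ and in closing the implication cycle, but the key ideas coincide.
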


\begin{proof}
The $\emptyset'$ component of $X$ contains an infinite computable predictable
reservoir, obtained from indices of machines that halt immediately.  Hence
Theorem~\ref{thm:three-visibility-collapse} gives
\begin{equation}
\Specinn(X)=\TTvis(X)=\WTTvis(X).
\end{equation}
Every c.e. set is many-one reducible to $\emptyset'$, so every c.e. degree
belongs to this spectrum.

Each $R_I$ is a computable coordinate projection of a real that is random
relative to $\emptyset'$, and is therefore $2$-random.  If $I\subseteq J$,
the required reductions follow by coordinate projection.  If
$i\in I\setminus J$, relativized van Lambalgen gives
\begin{equation}
R_i\in\MLR^{\emptyset'\oplus R_J},
\end{equation}
and consequently
\begin{equation}
R_i\nleq_T\emptyset'\oplus R_J.
\end{equation}
Since $R_i\leq_T R_I$, none of the three reductions in part~(ii) can hold.
This proves the equivalences.  All the displayed reals are truth-table
reducible to $X$, so they belong to $\Specinn(X)$.

Finally, a $2$-random real is not computable from $\emptyset'$.  It therefore
cannot be computed from a c.e. set.  Conversely, if a noncomputable c.e. set
were computable from $R_I$, it would give a nonzero common lower bound of
$\deg_T(R_I)$ and $\mathbf0'$, contradicting their minimal-pair property.
The remaining assertions follow from
$\emptyset'\leq_T\emptyset'\oplus R_I$ and
$\emptyset'\nleq_T R_J$.
\end{proof}

At the level of SIR degrees, the c.e.-represented branch is ordered by
truth-table reducibility, since c.e. presentations have predictable
reservoirs.  Every $R_I$ is Martin--L\"of random and hence reservoir-immune.
It follows that the c.e.-represented branch and the random branch are
entirely SIR-incomparable: reducibility in either direction would contradict
either reservoir immunity or the Turing incomparability established above.
Figure~\ref{fig:ce-random-bridge} summarizes the resulting guaranteed
suborder below $\deg_{\mathrm{sir}}(\emptyset'\oplus R)$.

\begin{figure}[!htbp]
\centering
\resizebox{0.96\textwidth}{!}{%
\begin{tikzpicture}[
  >=Latex,
  node distance=11mm and 15mm,
  every node/.style={font=\small,align=center},
  degree/.style={draw,rounded corners,inner sep=5pt},
  family/.style={draw,dashed,rounded corners,inner sep=8pt,text width=58mm}
]
\node[degree] (top) {$\deg_{\mathrm{sir}}(X)$, where $X=\emptyset'\oplus R$};
\node[family,below=of top] (lifted)
  {$\mathcal L_{\mathrm{sir}}=
    \{\deg_{\mathrm{sir}}(\emptyset'\oplus R_I):
      \varnothing\neq I\text{ computable}\}$\\
   $\emptyset'\oplus R_I\lesir\emptyset'\oplus R_J
      \iff I\subseteq J$};
\node[family,below left=of lifted,text width=51mm] (ce)
  {c.e.-represented SIR degrees\\
   $\cong$ c.e. truth-table degrees\\
   least element $\mathbf0_{\mathrm{sir}}$, greatest element
   $\deg_{\mathrm{sir}}(\emptyset')$};
\node[family,below right=of lifted,text width=55mm] (random)
  {$\mathcal R_{\mathrm{sir}}=
    \{\deg_{\mathrm{sir}}(R_I):
      \varnothing\neq I\text{ computable}\}$\\
   $R_I\lesir R_J\iff I\subseteq J$};

\draw[->] (lifted) -- (top);
\draw[->] (ce) -- node[above,sloped] {below every lifted degree} (lifted);
\draw[->] (random) -- node[above,sloped] {join with $\emptyset'$} (lifted);
\draw[densely dashed,<->] (ce.east) --
  node[above,text width=31mm] {all SIR-incomparable, including
  $\mathbf0_{\mathrm{sir}}$} (random.west);
\end{tikzpicture}%
}
\caption{A guaranteed SIR suborder below
$\deg_{\mathrm{sir}}(\emptyset'\oplus R)$.  The c.e.-represented branch is
a copy of the c.e. truth-table degrees; the random branch is SIR-incomparable
with it; and adjoining $\emptyset'$ to a random node produces the
corresponding lifted node.  The full lower cone may contain additional
degrees.}
\label{fig:ce-random-bridge}
\end{figure}

As shown in Figure~\ref{fig:ce-random-bridge}, the lower SIR cone of
$\emptyset'\oplus R$ contains three interacting parts: the c.e.-represented
spine, ordered by truth-table reducibility; a transverse family of
reservoir-immune random degrees; and the corresponding lifted family above
$\deg_{\mathrm{sir}}(\emptyset')$.  The diagram records only this guaranteed
suborder; additional degrees may also occur.

\section{Application: fair-random traces from Bernoulli sources}
\label{sec:bernoulli-extraction}

The degree-theoretic results above describe the structure below a random
trace.  We now show that every presentation which is Martin--L\"of random
for a computable nondegenerate Bernoulli measure has a fair-coin
Martin--L\"of-random SIR trace.  Moreover, the trace can be obtained on a
computable schedule of positive asymptotic density.

For computable $p\in(0,1)$, let $\mu_p$ denote Bernoulli measure and put
\begin{equation}
q=2p(1-p).
\end{equation}
For two independent $p$-biased bits, the pair is unequal with probability
$q$, and conditional on being unequal the orientations $01$ and $10$ are
equiprobable.  We adapt the classical von Neumann extractor
\cite{vonNeumann1951}.  The extracted bits from the sampling part of a block
are used as masks only at later target coordinates in that block.  Thus every
mask bit is known before the corresponding target source bit is revealed.

\begin{proposition}[Batched von Neumann extraction]
\label{prop:batched-vn}
For every computable $p\in(0,1)$ there are a total computable predictor $P$
and a computable schedule $S$ of positive asymptotic density such that
\begin{equation}
J_{P,S}^X\in\MLR
\end{equation}
for every $X\in\MLR_{\mu_p}$.
\end{proposition}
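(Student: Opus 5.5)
I would build the schedule out of consecutive computable blocks of one fixed length $L=2m+t$. The first $2m$ coordinates of each block form the \emph{sampling part}, read as $m$ disjoint pairs. The last $t$ coordinates are the \emph{target part}, and the schedule $S$ consists of the target coordinates of all blocks. With $L$ fixed, $S$ has asymptotic density $t/L>0$.

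The predictor works as follows. At a target coordinate, the prefix seen so far contains the whole sampling part of the current block. Applying von Neumann's rule to its $m$ pairs yields a finite string $v$ of extracted bits: a pair $01$ gives $0$, a pair $10$ gives $1$, and equal pairs are discarded. The $j$-th target coordinate of the block is masked by $v(j)$ when $j<|v|$. When $|v|\le j$, the predictor outputs $0$ and the trace bit is just the raw source bit. This $P$ is total computable, and the trace $J^X_{P,S}$ is a computable function of $X$, so the work is to show it carries $\mu_p$-randomness to fair-coin randomness.

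The raw fallback bit is biased whenever $p\neq 1/2$, so the plan is to pick $t$ small relative to $m$. I would prefer to avoid any fallback at all, and the cleanest way to do that is to declare every bit of the block as the measure-preserving object rather than insisting on a fixed $t$. The concrete choice is a computable map $F$ from $\mu_p$ to $\lambda$ that is defined on a $\mu_p$-measure-one effectively closed set, together with the fact that randomness is conserved under such almost-everywhere computable measure-preserving maps \cite{HoyrupRojas2009}. This pins down the following refinement of the schedule. Each block has $m$ sampling pairs followed by $m$ target coordinates. The trace takes only those target coordinates whose index is less than the number of unequal pairs in the block. Since the schedule must be fixed independently of the source, that cannot literally be the schedule, so I would instead handle it by a delayed batching argument across blocks. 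Extracted bits are stored in a computable queue of masks, and the target coordinate $s_k$ is masked by the $k$-th extracted bit overall. The target coordinates are spaced so that, with $\mu_p$-probability one, the $k$-th extracted bit is already available from sampling pairs placed before $s_k$. Concretely, I would put $s_k$ in block number $\lceil k/(cm)\rceil+1$, where $c<q$ is a fixed rational, since the law of large numbers for $\mu_p$-random reals guarantees about $qm$ extracted bits per block. The density of $S$ is then about $c/(2+c)>0$.

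The key steps, in order, are as follows. First, fix rational $c<q$ and the block layout above. Second, prove that for every $X\in\MLR_{\mu_p}$ the queue is eventually ahead of the schedule, by the effective strong law for $\mu_p$-random sequences applied to the pair indicators. Finitely many early failures are handled with the fallback value $0$ and only create a finite variant, which Lemma~\ref{lem:finite-variants-shifted-innovations}(i) absorbs, though randomness of finite variants is immediate anyway. Third, show that the trace is then a finite variant of $Z\oplus$-masked target bits, $Y(k)=X(s_k)\oplus V(k)$, where $V$ is the von Neumann output stream of the sampling pairs. By classical von Neumann theory, $V$ is the image of $\mu_p$ restricted to the sampling coordinates under an a.e.\ computable map pushing it to $\lambda$. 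Fourth, obtain randomness of $Y$ by conservation and van Lambalgen. The target coordinates and the sampling coordinates are disjoint computable sets, so by Lemma~\ref{lem:bernoulli-column-independence} and the relativized form, the sampling part $U$ is $\mu_p$-random and the target part $T$ is $\mu_p$-random relative to $U$. Hence $V=F(U)$ is fair-random, and XORing a sequence with an independent fair-random mask yields a fair-random sequence: the pair $(V,T)$ is random for $\lambda\times\mu_p$, and $(v,x)\mapsto v\oplus x$ pushes this measure to $\lambda$.

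The main obstacle is the fourth step. It needs randomness conservation for a map that is computable only almost everywhere, with the queue-alignment failure set effectively null. It also needs the product statement that $(F(U),T)$ is $\lambda\times\mu_p$-random. I would handle this by presenting the whole map $X\mapsto Y$ as one a.e.-computable map with $\mu_p\mapsto\lambda$. Measure preservation is checked on cylinders using the conditional equiprobability of $01$ and $10$ and the independence of target bits, and then the Hoyrup--Rojas conservation theorem is applied directly, which avoids van Lambalgen entirely.
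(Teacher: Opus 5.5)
Your argument is correct, but it takes a genuinely different route from the paper's. The paper uses blocks of growing length $N_k=M(k+1)^2$ and masks each target segment only with bits extracted in the same block. Hoeffding makes the failure events $F_k$ summable, and effective Borel--Cantelli shows that only finitely many occur. An auxiliary tape $A\in\MLR^X$ fills the failed blocks, so the ideal trace is a \emph{total} computable function of $(X,A)$ pushing $\mu_p\times\lambda$ to $\lambda$; van Lambalgen plus conservation for total maps then finish.

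You instead keep fixed blocks and a global queue. Eventual alignment then comes from the effective strong law for the pair indicators (a $\mu_q$-random sequence) rather than from summable tail bounds. Your ideal map $X\mapsto(X(s_k)\oplus V(k))_k$ needs no auxiliary tape, but it is only almost-everywhere computable and non-causal, so you must invoke the Hoyrup--Rojas conservation theorem for a.e.-computable morphisms.

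Each route buys something. Yours gives a periodic schedule and a better rate: density $c/(2+c)$ for any rational $c<q$, approaching the full pairwise von Neumann rate $q/(2+q)$, versus the paper's $r/(2+r)$ with $r<q/2$. The paper's gives a one-block causal delay; in your queue the surplus accumulates, and the gap between extracting a mask bit and using it grows linearly.

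Three small repairs are needed. First, the domain of the von Neumann map (infinitely many unequal pairs) is an effective $G_\delta$ set of full measure, not an effectively closed one; this is exactly the Hoyrup--Rojas hypothesis and contains every $\mu_p$-random real, so nothing breaks. Second, with $m$ target slots per block but only about $cm$ scheduled, the density is $c/3$; make the target part exactly $cm$ long (with $cm$ an integer) to get $c/(2+c)$. Third, in your last paragraph conservation must be applied to the ideal map, not to the actual trace, which is not exactly measure preserving because of the early fallback bits; the finite-variant step then transfers randomness to the actual trace.
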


\begin{proof}
Since $q>0$ is computable, choose rationals $r,a$ with
\begin{equation}
0<2r<a<q.
\end{equation}
Hoeffding's inequality gives, for $c=2(a-r)^2>0$,
\begin{equation}
\mu_p\bigl(\operatorname{Bin}(N,q)<rN\bigr)\leq e^{-cN}
\end{equation}
for every $N$.  Fix an integer $M$ with $rM\geq1$ and put
\begin{equation}
N_k=M(k+1)^2,
\qquad
m_k=\lfloor rN_k\rfloor.
\end{equation}
Then $N_k\to\infty$, the series $\sum_k e^{-cN_k}$ converges computably, and
\begin{equation}
\frac{N_k}{\sum_{j<k}N_j}\longrightarrow0.
\end{equation}

Partition $\omega$ computably into successive blocks.  Block $k$ consists
first of a sampling segment of length $2N_k$, viewed as $N_k$ consecutive
pairs, and then a target segment of length $m_k$.  Let $S$ be the union of
the target segments.

On the sampling segment, apply the von Neumann rule
\begin{equation}
01\mapsto0,
\qquad
10\mapsto1,
\end{equation}
discarding $00$ and $11$.  If at least $m_k$ unequal pairs occur, let $U_k$
be the first $m_k$ extracted orientations.  Otherwise put
$U_k=0^{m_k}$.  At the $j$th target coordinate of block $k$, the predictor outputs
$U_k(j)$; at all other lengths, define it arbitrarily.  Since the entire
sampling segment precedes the target segment, this defines a total computable
predictor and is sequentially admissible.

Let $F_k$ be the event that fewer than $m_k$ unequal pairs occur.  The number
of unequal pairs has binomial distribution with parameters $N_k$ and $q$, so
\begin{equation}
\mu_p(F_k)\leq e^{-cN_k}.
\end{equation}

To verify randomness of the trace, let $A$ be an auxiliary fair-coin tape,
parsed into consecutive blocks $A_k\in2^{m_k}$.  Define the ideal mask
\begin{equation}
\widetilde U_k=
\begin{cases}
U_k,&\text{if }F_k\text{ does not occur},\\
A_k,&\text{if }F_k\text{ occurs}.
\end{cases}
\end{equation}
Conditional on success, the first $m_k$ von Neumann orientations are exactly
uniform and are independent of the target segment.  On failure, the auxiliary
block $A_k$ is uniform and independent of the source.  Hence each ideal mask
$\widetilde U_k$ is uniformly distributed on $2^{m_k}$ and independent of the
corresponding target segment.  Different blocks are independent.  Therefore,
when each target segment is XORed with its ideal mask, the resulting ideal
trace has fair-coin distribution.

Fix $X\in\MLR_{\mu_p}$ and choose $A\in\MLR^X$.  By van Lambalgen's theorem,
$(X,A)$ is Martin--L\"of random for $\mu_p\times\lambda$.  The ideal trace is
a computable image of $(X,A)$ whose pushforward measure is $\lambda$, so
randomness conservation makes it a member of $\MLR$.  The events $(F_k)$ are
uniformly effective and have a computably convergent sum.  Effective
Borel--Cantelli therefore implies that only finitely many $F_k$ occur on
$X$.  The actual trace differs from the ideal trace in only finitely many
blocks, hence in only finitely many bits, and is therefore also in $\MLR$.

Finally, put
\begin{equation}
L_K=\sum_{k<K}(2N_k+m_k).
\end{equation}
Since $m_k=\lfloor rN_k\rfloor$ and $N_k\to\infty$,
\begin{equation}
\frac{|S\cap L_K|}{L_K}
=
\frac{\sum_{k<K}m_k}{\sum_{k<K}(2N_k+m_k)}
\longrightarrow
\frac{r}{2+r}.
\end{equation}
Since the length of block $K$ is $o(L_K)$, passing from a block endpoint to
an intermediate position changes the density by $o(1)$.  Thus $S$ has
positive asymptotic density $r/(2+r)$.
\end{proof}

\begin{corollary}[Bernoulli-random presentations contain a fair-random SIR universe]
\label{cor:bernoulli-fair-random-richness}
Let $p\in(0,1)$ be computable and let $X\in\MLR_{\mu_p}$.  Then there is a
fair-coin Martin--L\"of-random real $R$ such that
\begin{equation}
R\lesir X.
\end{equation}
Consequently, every computable countable partial order embeds into the SIR
degrees strictly below $X$, represented entirely by fair-coin
Martin--L\"of-random reals.
\end{corollary}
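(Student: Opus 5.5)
The proof is a direct composition of two earlier results. First, apply Proposition~\ref{prop:batched-vn} to the given computable $p\in(0,1)$. It yields a total computable predictor $P$ and a computable schedule $S$ of positive density such that $J_{P,S}^X\in\MLR$ for every $X\in\MLR_{\mu_p}$. For our fixed $X\in\MLR_{\mu_p}$ we set $R=J_{P,S}^X$. Then Theorem~\ref{thm:exact-trace-characterization} gives $R\lesir X$ by definition, and $R$ is fair-coin Martin--L\"of random. This settles the first assertion.

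For the consequence, we invoke Theorem~\ref{thm:bernoulli-poset-universality} with $p=1/2$, since $\mu_{1/2}=\lambda$ is computable and nondegenerate. Its hypothesis is exactly that $X$ has a $\mu_{1/2}$-random sequential innovation trace, and $R$ is one. The theorem then provides, for every computable countable partial order $(\mathcal A,\preceq)$, an order embedding $a\mapsto\deg_{\mathrm{sir}}(R_a)$ into the SIR degrees strictly below $\deg_{\mathrm{sir}}(X)$. Each $R_a$ is a computable coordinate projection of $R$, hence fair-coin Martin--L\"of random. Transitivity (Proposition~\ref{prop:sir-preorder}) places each $R_a$ below $X$.

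Strictness is the only point to check, and I expect it is already handled inside Theorem~\ref{thm:bernoulli-poset-universality}. There, $R_a<_T R$ holds because each $R_a$ omits the odd-indexed columns. If $X\lesir R_a$ held, we would get $R\lesir X\lesir R_a$, so $R\leq_T R_a$, a contradiction. Therefore each embedded degree is strictly below $X$ and not merely below $R$.

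There is no substantive obstacle, since all the work is in Proposition~\ref{prop:batched-vn}. The one point to note explicitly is that the partial order is embedded below $X$ itself, even though $X$ is only $\mu_p$-random and possibly not fair-random. The universality theorem only requires a random trace below $X$, so this causes no difficulty.
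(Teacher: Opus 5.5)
Your proposal is correct and follows the paper's proof: take $R=J_{P,S}^X$ from Proposition~\ref{prop:batched-vn}, note that $R\lesir X$, and apply Theorem~\ref{thm:bernoulli-poset-universality} with parameter $1/2$. Your re-check of strictness below $X$ (that $R\lesir X\lesir R_a$ would force $R\leq_T R_a$) repeats the final step of that theorem's proof, so nothing further is needed.
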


\begin{proof}
By Proposition~\ref{prop:batched-vn}, some trace
$R=J_{P,S}^X$ belongs to $\MLR$, and hence $R\lesir X$.  Apply
Theorem~\ref{thm:bernoulli-poset-universality} with parameter $1/2$.
\end{proof}

\begin{remark}[Native and extracted random suborders]
Theorem~\ref{thm:bernoulli-poset-universality}, applied directly to
$X\in\MLR_{\mu_p}$ with parameter $p$, gives a universal computable suborder
below $X$ represented by $\mu_p$-Martin--L\"of-random reals.  The preceding
corollary gives another universal computable suborder below the same
presentation, represented by fair-coin Martin--L\"of-random reals.  Thus,
when $p\neq1/2$, the disjoint classes $\MLR_{\mu_p}$ and $\MLR$ nevertheless
occur together in the same SIR lower cone.  This is the sense in which the
SIR-degree structures associated with the two measures are intertwined; it
does not assert that arbitrary degrees represented by the two classes are
comparable.

The density supplied by the construction decreases as the source becomes
more biased.  Indeed, the supply of von Neumann mask bits is governed by
\begin{equation}
q=2p(1-p)
 =\frac12-2\left(p-\frac12\right)^2,
\end{equation}
and the construction gives density $r/(2+r)$ for any rational
$0<r<q/2$.  Hence the available positive-density lower bound decreases as
$|p-1/2|$ increases.  This is a feature of the present construction, not an
optimality statement.
\end{remark}

\begin{remark}[Rate refinements]
The batched von Neumann construction is deliberately not rate-optimal.  Its
purpose here is to produce a positive-density fair-random trace and thereby
connect the native Bernoulli-random and fair-random parts of the SIR degree
structure.  The pair extractor can be replaced by finite-block type-class
extraction in the manner of Elias~\cite{Elias1972}, while retaining the same
one-block causal delay, to obtain higher extraction densities.  We do not
pursue sharp rate optimization here.
\end{remark}

\section{Conclusion}

Sequential-innovation reducibility formalizes a causal restriction on
truth-table computation: each output is obtained from a fresh source bit after
the predictor has committed using only the preceding history.  The resulting
reducibility strictly refines truth-table reducibility and induces an exhaustive
geometric dichotomy.  The upper cone above the computable degree is a
truth-table spine, canonically isomorphic to the truth-table degrees, while its
complement consists exactly of the reservoir-immune degrees and is downward
closed under SIR.

Within the reservoir-immune region, the Martin--L\"of-random degrees form a
proper downward-closed substructure with a markedly different geometry.
Same-Turing-degree descent is governed by finite shifts, strict Turing-degree
drops contain densely ordered random intervals, and random columns need not
admit joins.  The two bridge constructions clarify how the regions interact:
$X\mapsto X^*$ moves an RI presentation to the greatest SIR representative of
the same truth-table degree, while $\emptyset'\oplus R$ combines c.e. and random
suborders in a single lower cone.

The Bernoulli application shows that every computable
$\mu_p$-Martin--L\"of-random presentation has a positive-density
fair-coin-random SIR trace.  It therefore contains both native
$\mu_p$-random and extracted fair-random copies of every computable
countable partial order.  The batched von Neumann construction is not
rate-optimal; Elias-type block methods provide possible quantitative
refinements.

\section*{Acknowledgments}

The author used OpenAI's ChatGPT 5.6 as an interactive research and writing
assistant during the development of this work, including to test and refine
arguments, organize proofs, and improve the exposition. All mathematical
claims, proofs, references, and final formulations were independently reviewed
and verified by the author, who takes full responsibility for the content of
the paper.

\end{document}